\newcommand{\Hrule}{\rule{\linewidth}{0.5mm}}
\newtheorem{theorem}{Theorem}[section]
\newtheorem{theoremz}{Theorem}             
\newtheorem{corollaryz}{Corollary} 
\newtheorem{proposition}[theorem]{Proposition}
\newtheorem{lemma}[theorem]{Lemma}
\newtheorem{corollary}[theorem]{Corollary}
\theoremstyle{definition}
\newtheorem{definition}[theorem]{Definition}
\newtheorem{example}[theorem]{Example}
\theoremstyle{remark}
\newtheorem{remark}[theorem]{Remark}
\newcommand{\Rr}{\mathbb R}
\newcommand{\set}[1]{\left\{#1\right\}}
\newcommand{\eps}{\varepsilon}
\newcommand{\rmap}{\longrightarrow}
\newcommand{\X}{\ensuremath{\mathfrak{X}}}
\newcommand{\G}{\mathcal{G}}            
\newcommand{\OO}{\mathcal{O}}            
\newcommand{\NN}{\mathcal{N}}            
\newcommand{\HH}{\mathcal{H}}            
\newcommand{\KK}{\mathcal{K}}            
\newcommand{\K}{\mathcal{K}}            
\newcommand{\al}{\alpha}                
\renewcommand{\d}{\mathrm{d}}
\begin{document}


\begin{titlepage}
\begin{center}
\Hrule \\[0.4cm]
{\huge \bfseries On the linearization theorem for proper Lie groupoids}\\[0.4cm]
\Hrule \\[1.5cm]

\large{Marius Crainic\footnote{Supported by NWO Vidi Project ÒPoisson topologyÓ -- grant 639.032.712} and Ivan Struchiner}\footnote{Supported by NWO Vidi Project ÒPoisson topologyÓ -- grant 639.032.712\\ \\

MSC: 58H05, 57R99 (Primary) 57S15, 53C12 (Secondary)

Key words: Lie groupoids, proper actions, linearization, foliations, local Reeb stability

Mots cl\'es:  Groupo\"ides de Lie, actions propres, lin\'earisation, feuilletages, stabilit\'e de Reeb local}
\end{center}
\vspace{2.0cm}
\textbf{Abstract:} We revisit the linearization theorems for proper Lie groupoids around general orbits (statements and proofs). In the the fixed point case (known as Zung's theorem) we give a shorter and more geometric proof, based on a Moser deformation argument. The passage to general orbits (Weinstein) is given a more conceptual interpretation: as a manifestation of Morita invariance. We also clarify the precise statements of the Linearization Theorem (there has been some confusion on this, which has propagated throughout the existing literature). 

\vspace{2cm}
\begin{center}
\Hrule \\[0.4cm]
{\huge \bfseries Sur le th\'eor\'eme de lin\'earisation pour les groupo\"ides de Lie propres}\\[0.4cm]
\Hrule \\[1.5cm]
\end{center}
\textbf{R\'esum\'e:} Nous revisitons les th\'eor\`emes de lin\'earisation pour les
groupo\"ides de Lie propres autour des orbites g\'en\'erales. Dans le
cas du point fixe (connu sous le nom de th\'eor\`eme de Zung), nous
donnons une preuve plus courte et plus g\'eom\'etrique, bas\'ee sur
l'argument de deformation de Moser. Le passage au cas g\'en\'eral est
d\'ecrit de fa\c{c}on plus conceptuelle, comme manifestation de
l'invariance de Morita. Nous clarifions \'egalement les conditions
n\'ecessaires \`a l'application du th\'eor\`eme (il y a eu assez de
confusions dans ce qui concerne l\'enonc'e precis du th\'eor\`eme, et
qui se sont propag\'ees dans la litt\'erature.)

\end{titlepage}

\title[The linearization theorem for proper Lie groupoids]{On the linearization theorem for proper Lie groupoids}

\author{Marius Crainic}
\address{Depart. of Math., Utrecht University, 3508 TA Utrecht, 
The Netherlands}
\email{m.crainic@uu.nl}

\author{Ivan Struchiner}
\address{Depart. of Math., University of S\~ao Paulo, Rua do Mat\~ao 1010, 
S\~ao Paulo, SP, Brazil, CEP: 05508-090}
\email{ivanstru@ime.usp.br}

\subjclass[2000]{}

\maketitle

\setcounter{tocdepth}{1}

\section*{Introduction}             %

The linearization theorem for Lie groupoids is a far reaching generalization of the
tube theorem (for Lie group actions), Ehresmann's theorem (for proper submersions), and Reeb stability
(for foliations). It was first addressed by A. Weinstein 
as part of his program that aims at a geometric understanding of Conn's linearization
theorem in Poisson Geometry \cite{Wein2}. Various partial results have been obtained over the last 10 years (see Section \ref{sec: existing results}). However, even though it was a general belief that \emph{every proper Lie groupoid is linearizable}, such a statement has never been made precise or proven. Moreover, even for the existing results there has been some confusion regarding their precise statements (see also section \ref{sec: existing results} bellow); this confusion has propagated throughout  the existing literature.


The aim of this note is to clarify the statement of the linearization theorem and to present a simple geometric proof of it. 
Recall that, for any orbit $\OO$ of a Lie groupoid $\G$, there is a local model for $\G$ around $\OO$, the linearization $\NN_{\OO}(\G)$. 
The theorem refers to the equivalence of $\G$ and $\NN_{\OO}(\G)$ near $\OO$. The main theorem we discuss is the following.

\begin{theoremz}\label{main-theorem} Let $\G$ be a Lie groupoid over $M$, and $\mathcal{O}\subset M$ an orbit through $x\in M$.
If $\G$ is proper at $x$, then 
there are neighborhoods $U$ and $V$ of $\OO$ such that $\G|_{U}\cong \NN_{\OO}(\G)|_V$. 
\end{theoremz}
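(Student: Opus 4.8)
The plan is to prove the theorem first in the special case where $\OO=\{x\}$ is a fixed point (this is Zung's theorem) by a Moser deformation argument, and then to deduce the statement for a general orbit from the fixed-point case, exhibiting the passage as a manifestation of Morita invariance. For the reduction, I would choose an embedded slice $S\subset M$ through $x$ transverse to $\OO$ and restrict $\G$ to it, forming the Lie groupoid $\G_S:=\s^{-1}(S)\cap\t^{-1}(S)$ over $S$. After shrinking $S$, the orbit through $x$ collapses to $\{x\}$, so that $x$ is a \emph{fixed point} of $\G_S$, and $\G_S$ is again proper at $x$. Near $\OO$ the inclusion $\G_S\hookrightarrow\G$ is a Morita equivalence which matches the linear model $\NN_{\OO}(\G)$ with $\NN_x(\G_S)$; granting the Morita invariance of linearizability (the conceptual content of the passage to general orbits), the theorem reduces to linearizing $\G_S$ around the fixed point $x$.

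For the fixed-point case I would use properness to average: since $\G$ is proper at the fixed point $x$, the isotropy group $\G_x$ is compact, so averaging produces a $\G_x$-invariant Riemannian metric near $x$. Its exponential map gives a $\G_x$-equivariant diffeomorphism from a neighborhood of $0$ in $T_xM$ onto a neighborhood of $x$, identifying the base of the local model $\NN_x(\G)=\G_x\ltimes T_xM$ (the action groupoid of the isotropy representation) with that of $\G$ and agreeing to first order with the sought linearization. It then remains to upgrade this identification of the bases to an isomorphism of groupoid structures near $x$.

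The core of the argument is a \emph{Moser trick} at the level of groupoids. Transporting the given structure by the equivariant exponential, I would arrange $\G$ and $\NN_x(\G)$ to live on the same neighborhood and to agree along $\G_x$, then connect them by a smooth path $\G_t$, $t\in[0,1]$, of groupoid structures with $\G_0=\NN_x(\G)$ and $\G_1=\G$. Differentiating in $t$ yields a deformation cocycle $c_t$ in the deformation complex of $\G_t$, and the aim is to write $c_t$ as the coboundary of a time-dependent multiplicative vector field $X_t$. Here properness enters decisively: a Haar system together with a cutoff function supplies an averaging homotopy operator trivializing $c_t$, i.e.\ solving the relevant cohomological equation. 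Integrating the flow of $X_t$ gives a family of local groupoid isomorphisms $\phi_t$ with $\phi_0=\mathrm{id}$, and $\phi_1\colon\NN_x(\G)\to\G$ is the desired linearization on a neighborhood of $x$.

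The hardest part will be this Moser step: constructing the interpolating family $\G_t$ on a \emph{fixed} neighborhood so that differentiation is meaningful, pinning down the correct deformation complex in which $c_t$ is a cocycle, and proving the requisite vanishing via averaging over the proper groupoids $\G_t$ — all while keeping the flow of $X_t$ defined on a genuine neighborhood of $x$ uniformly in $t\in[0,1]$. Properness is used twice over, to produce the invariant tube and to run the averaging that makes the homotopy operator exist; losing compactness of the source fibers would destroy the uniform control of domains and is exactly where naive attempts break down.
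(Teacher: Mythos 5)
Your overall strategy coincides with the paper's: reduce the general orbit to the fixed-point case by restricting to a slice and invoking Morita invariance of properness and of linearizability (this is exactly the content of Lemma \ref{lemma-transversals}, Proposition \ref{exists-slice} and Proposition \ref{Morita-equivalence}), and prove the fixed-point case by a Moser-type deformation in which the deformation cocycle is trivialized by averaging against a Haar system and the resulting multiplicative vector field is integrated via Lemma \ref{mult-flow}. So the two-stage architecture and the averaging engine are the paper's.

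There is, however, a genuine gap in your setup of the Moser step, precisely at the point you yourself flag as the hardest. First, the phrases ``averaging produces a $\G_x$-invariant Riemannian metric near $x$'' and ``a $G_x$-equivariant exponential map onto a neighborhood of $x$'' are not meaningful: the compact isotropy group $G_x$ acts linearly on $T_xM$ (the isotropy representation), but it does \emph{not} act on any neighborhood of $x$ in $M$ --- producing such an action is essentially what linearization at a fixed point asserts, so this step is circular. (For the base, any chart suffices; no equivariance is needed or available at this stage.) Second, and more seriously, identifying the bases is not enough to define your interpolating family $\G_t$: one must also realize the \emph{arrow space} of $\G$ in a form where rescaling makes sense. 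This is what Proposition \ref{prop-fixed-points} supplies: a tubular neighborhood of $G_x$ inside the arrow manifold $\G$, combined with properness at $x$ (a sequence argument showing that $\G|_{W}$ lies inside the tube for $W$ small, which simultaneously makes $\G|_{W}$ proper), gives an open embedding $(r,s)\colon \G|_{W}\rmap G_x\times W$ with $s$ becoming the projection. Only after this can one write the concrete path
\[ \G_{\eps}=\set{g\in G_x\times \Rr^n : \eps g\in \G},\qquad t_{\eps}(g)=\tfrac{1}{\eps}\,t(\eps g), \]
assemble the $\G_{\eps}$ into a single \emph{proper} groupoid $\tilde{\G}$ over $\Rr^n\times\Rr$, and carry out the averaging and the flow argument there; the properness of $\tilde{\G}$ is what makes the Haar-system integration well defined and gives the uniform control of flow domains you worry about. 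Without this arrow-space trivialization, neither your family $\G_t$ nor its derivative (the cocycle) is defined. A smaller point: Morita invariance of linearizability, which you ``grant,'' itself requires a nontrivial proof (the gauge-groupoid reconstruction from a bibundle plus an equivariant tubular neighborhood argument, as in Proposition \ref{Morita-equivalence}), though you have correctly isolated it as the key lemma for the reduction.
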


The first consequence is a stronger version of the result obtained by joining the works of Weinstein \cite{Wein3} and Zung \cite{Zung}:

\begin{corollaryz}\label{cor1} If $\G$ is proper at $x$ and $\OO$ is of finite type, 
then one can find arbitrarily small neighborhoods $U$ of $\OO$ in $M$ such that $\G|_{U}\cong \NN_{\OO}(\G)$.
\end{corollaryz}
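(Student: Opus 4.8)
The plan is to derive Corollary \ref{cor1} from the Main Theorem (Theorem \ref{main-theorem}) by upgrading the conclusion in two ways: first, replacing the equivalence $\G|_U \cong \NN_{\OO}(\G)|_V$ between \emph{two} shrunk groupoids by an isomorphism onto the \emph{full} linearization $\NN_{\OO}(\G)$ (not just its restriction to some $V$); and second, arranging that the neighborhood $U$ can be taken arbitrarily small. The extra hypothesis, that $\OO$ is of finite type, is exactly what should let us absorb the restriction on the linearization side. I would begin by recalling the finite-type condition and the structure of the local model $\NN_{\OO}(\G)$, whose base is (a bundle over) $\OO$ with fibers the normal representation; being of finite type should mean that $\OO$ admits a neighborhood basis of $\G$-invariant open sets that are "saturated" inside the linear model, so that restricting $\NN_{\OO}(\G)$ to a suitable invariant $V$ already produces something isomorphic to all of $\NN_{\OO}(\G)$.

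The key steps, in order, are as follows. First I would apply Theorem \ref{main-theorem} verbatim to obtain invariant neighborhoods $U_0, V_0$ of $\OO$ and an isomorphism $\G|_{U_0} \cong \NN_{\OO}(\G)|_{V_0}$. Second, using finite type, I would produce inside $V_0$ a $\G$-invariant neighborhood $V_1$ of $\OO$ in the base of $\NN_{\OO}(\G)$ together with a diffeomorphism (respecting the groupoid structure) $\NN_{\OO}(\G)|_{V_1} \cong \NN_{\OO}(\G)$; the mechanism is the fiberwise linear/homogeneous structure of the normal model, which lets one radially shrink the fibers onto an arbitrarily small tube while remaining isomorphic to the whole linear groupoid. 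Third, I would transport $V_1$ back through the isomorphism of the first step to get an invariant open $U \subset U_0$ with $\G|_U \cong \NN_{\OO}(\G)|_{V_1} \cong \NN_{\OO}(\G)$. Finally, for the "arbitrarily small" clause, I would iterate or parametrize the radial-shrinking map so that the resulting $U$ lies inside any prescribed neighborhood of $\OO$, invoking properness at $x$ to guarantee that these invariant tubes form a genuine neighborhood basis.

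The main obstacle I anticipate is the second step: establishing that the finite-type hypothesis really gives a groupoid isomorphism between the restricted linearization and the full one, rather than a mere diffeomorphism of bases. One must check that the radial contraction of the normal fibers intertwines source, target, multiplication, and unit maps of $\NN_{\OO}(\G)$ — equivalently, that the linear structure of the normal representation is honored by the chosen invariant exhaustion. This is where finite type does real work: it is precisely the condition ensuring the normal directions carry a compatible homogeneous (e.g.\ scaling) structure, so that the flow contracting a tube into a smaller tube is by groupoid automorphisms. I would isolate this as a self-contained lemma about linearizations of finite type and treat the deduction of the corollary as a formal consequence of Theorem \ref{main-theorem} together with that lemma.
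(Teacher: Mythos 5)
Your step 2 is where the argument breaks, and it breaks precisely in the case the corollary is designed for: non-compact orbits of finite type. First, a misstatement: Theorem \ref{main-theorem} does not provide invariant neighborhoods $U_0$, $V_0$, only arbitrary ones. This matters, because the invariant open neighborhoods of $\OO$ in the linear model are exactly the ``product'' ones $P_x\times_{G_x}W$, with $W$ a $G_x$-invariant neighborhood of $0\in \NN_x$ (see the example following Definition \ref{def-invariance}); when $\OO$ is non-compact, a neighborhood $V_0$ that pinches down to zero along the orbit contains no such product, hence contains \emph{no} invariant neighborhood at all. So the invariant $V_1\subset V_0$ you want simply need not exist, and your radial contraction cannot repair this: scaling the normal coordinate by a constant only moves product neighborhoods to product neighborhoods, while scaling by a non-constant function of the orbit point is not a groupoid map --- an arrow $[p,q,v]$ of $\NN_{\OO}(\G)$ has source $[q,v]$ and target $[p,v]$ over two different points of $\OO$ but with the \emph{same} normal coordinate $v$, so rescaling $v$ by different factors at the source and at the target is inconsistent with the structure maps. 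Relatedly, you have misidentified what finite type does: the normal model always carries a linear, hence homogeneous, scaling structure, so finite type cannot be ``the condition ensuring'' such a structure.

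Where finite type actually enters is Weinstein's semi-stability lemma (Lemma \ref{Weinstein-lemma}): a $G_x$-equivariant submersion onto a neighborhood of $0$ in $\NN_x$ whose fiber over $0$ is a free $G_x$-space with finite-type quotient is, near that fiber, equivariantly equivalent to a product fibration. The paper applies this to the equivariant bundle $P$ furnished by the extended local model (Proposition \ref{new-local-model}); in your setting one can instead apply it to the $G_x$-invariant open set $\tilde{V}_0\subset P_x\times \NN_x$ covering $V_0$, with $\alpha=\textrm{pr}_2$. This produces a (generally non-invariant) neighborhood $V_1\subset V_0$ together with an isomorphism $\NN_{\OO}(\G)|_{V_1}\cong \NN_{\OO}(\G)|_{P_x\times_{G_x}W}$ induced by the trivialization --- \emph{not} by an inclusion. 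Only after this step does your scaling idea apply: since $G_x$ is compact (a consequence of properness at $x$, cf. Proposition \ref{explanation-prop-cor}(a), which you also need for the equivariant form of Weinstein's lemma but never mention), $W$ contains a $G_x$-invariant neighborhood equivariantly diffeomorphic to $\NN_x$, whence $\NN_{\OO}(\G)|_{P_x\times_{G_x}W}$ restricts further to a copy of all of $\NN_{\OO}(\G)$; the ``arbitrarily small'' clause follows by running the same argument inside any prescribed neighborhood. In short, your outline has the right endpoints, but the mechanism you place at its center would fail for every non-compact finite-type orbit, and the missing ingredient is exactly the lemma through which the finite-type hypothesis does its work in the paper.
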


Of course, requiring $U$ invariant is a natural condition. In this direction we have:

\begin{corollaryz}\label{cor2} If $\G$ is $s$-proper at $x$, then 
one can find arbitrarily small invariant neighborhood $U$ of $\OO$ in $M$ such that $\G|_{U}\cong \NN_{\OO}(\G)$.
\end{corollaryz}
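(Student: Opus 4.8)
The plan is to deduce the statement from the Main Theorem by upgrading the weak linearization it provides to an \emph{invariant} one, using $s$-properness in two essential places. First I would record the elementary consequences of $s$-properness at $x$: since the source map is proper over a neighbourhood of $x$, the fibre $P := s^{-1}(x)$ is compact, hence so is the orbit $\OO = \t(P)$; moreover $s$-properness is stronger than properness (if $\s$ is proper then $(\s,\t)^{-1}(K\times L)\subseteq \s^{-1}(K)$ is compact), so the Main Theorem applies and yields a groupoid isomorphism $\phi \colon \G|_{U_0} \xrightarrow{\cong} \NN_\OO(\G)|_{V_0}$ for some neighbourhoods $U_0$ of $\OO$ and $V_0$ of the zero section.

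The second preliminary step is to produce a basis of $\G$-invariant open neighbourhoods of $\OO$. Here I would use that $s$-properness forces the orbit projection $q \colon M \to M/\G$ to be proper, in particular closed, near $[\OO]$. The standard point-set argument then applies: given any open $U_0 \supseteq \OO$, the set $N := (M/\G) \setminus q(M \setminus U_0)$ is open, contains $[\OO]$ (because $\OO\subseteq U_0$), and satisfies $q^{-1}(N) \subseteq U_0$; thus $q^{-1}(N)$ is a $\G$-invariant neighbourhood of $\OO$ inside $U_0$. Replacing $U_0$ by such a set, I may assume from the outset that $U_0$ is $\G$-invariant, so that $\G|_{U_0}$ is a full restriction and $V_0 = \phi(U_0)$ on units is genuinely $\NN_\OO(\G)$-invariant.

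The heart of the argument is the \emph{homogeneity} of the linear model. Because the isotropy $G = \G_x$ is compact (properness at $x$), the normal representation $V = T_xM/T_x\OO$ carries a $G$-invariant inner product, and the associated ball-bundles $P \times_G B_r \subseteq \NN_\OO$ are $\NN_\OO(\G)$-invariant neighbourhoods of the zero section. A $G$-equivariant radial diffeomorphism $B_r \cong V$ (depending only on $\abs{v}$, hence commuting with the fibrewise gauge action that defines $\NN_\OO(\G)$) induces an isomorphism $\NN_\OO(\G)|_{P \times_G B_r} \cong \NN_\OO(\G)$ of the full linear model with its restriction to any such tube. Since $\OO$, equivalently $P$, is compact, there is a \emph{uniform} radius $r > 0$ with $P \times_G B_r \subseteq V_0$, and $r$ may be taken as small as we like. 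Transporting back, $U' := \phi^{-1}(P \times_G B_r)$ on units is a $\G$-invariant neighbourhood of $\OO$ (invariance of the tube together with invariance of $U_0$ rules out any orbit escaping), and composing the restriction of $\phi$ with the homogeneity isomorphism gives $\G|_{U'} \cong \NN_\OO(\G)$. Letting $r \to 0$ produces the required arbitrarily small invariant neighbourhoods.

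The step I expect to be the main obstacle is the second one, manufacturing the invariant neighbourhood basis, together with the uniform tube radius in the third step. Both rest squarely on the compactness extracted from $s$-properness (of the source fibres, hence of $\OO$); without it the radius would have to decay along a non-compact orbit, the tube $P \times_G B_r$ would no longer be isomorphic to the full model, and one would recover only the weaker conclusion of Corollary \ref{cor1}. Checking that $q$ is closed near $[\OO]$ and that the radial map is compatible with the gauge action defining $\NN_\OO(\G)$ are the points where I would need to be most careful.
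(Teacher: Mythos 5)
Your proof is correct in substance and rests on the same two pillars as the paper's own argument --- stability of $\OO$ extracted from $s$-properness, and radial homogeneity of the linear model coming from compactness of $G_x$ --- but it implements them by a different, more direct route. The paper proves stability by a short sequential argument (Lemma \ref{s-prop-stable}: if the saturation of every small ball around $x$ escaped a fixed neighborhood of $\OO$, $s$-properness at $x$ would give a convergent subsequence of arrows whose targets both leave and converge into that neighborhood), and then passes through the ``extended local model'' machinery (Proposition \ref{new-local-model} and the category $\mathcal{B}un_{G_x}(\NN_x,0)$): linearizability plus stability exhibits $\G$ over an invariant neighborhood as $\textrm{Gauge}(P_x\times W)$ for an open sub-bundle of the linear object, after which compactness of $G_x$ lets one shrink $W$ to an invariant ball equivariantly diffeomorphic to $\NN_x$. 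You bypass that formalism entirely: you first shrink $U_0$ to a $\G$-invariant neighborhood, then pull the invariant disk bundle $P_x\times_{G_x}B_r$ back through the linearization isomorphism $\phi$, observing that invariance of $U_0$ on one side together with invariance of the tube on the other forces $\phi^{-1}(P_x\times_{G_x}B_r)$ to be $\G$-invariant (an orbit meeting it maps to a trace of a linear-model orbit meeting the tube, hence contained in it). That transport argument is sound and is a genuine shortcut; it recovers the content of Proposition \ref{new-local-model} in the stable case without introducing the bundle category, at the price of being tailored to this corollary rather than reusable for Corollary \ref{cor1} and Proposition \ref{explanation-prop-cor} as the paper's formalism is.

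Two caveats. First, your intermediate assertion that once $U_0$ is invariant, $V_0=\phi(U_0)$ is ``genuinely $\NN_{\OO}(\G)$-invariant'' is false in general: $\phi$ carries full $\G$-orbits to traces of linear-model orbits on $V_0$, not to full orbits --- this is precisely the distinction between linearizability and its (semi-)invariant refinements that the paper is at pains to emphasize. Fortunately the assertion is never used; your actual invariance argument needs only that $U_0$ and the tube are invariant on their respective sides. Second, your route to the invariant neighborhood basis via closedness of the orbit projection $q$ is stated too loosely: with properness of $s$ only \emph{at} $x$, closedness of saturations is not automatic. One must first propagate $s$-properness from $x$ to all points of the (compact) orbit, show that the saturation of $M\setminus U_0$ is closed at points of $\OO$, and then saturate a small neighborhood of $\OO$ avoiding it. This can all be done, but the paper's direct sequential proof of Lemma \ref{s-prop-stable} achieves the same conclusion in a few lines and is the cleaner choice at exactly the point you yourself flag as the main obstacle.
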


Note that $s$-properness implies that $\OO$ is compact. But also the property ``one can find arbitrarily small invariant neighborhoods" alone is another strong 
property of $\OO$ (called stability). Actually, if $\G$ is proper at $x$, then the stability 
of $\OO$ forces $s$-properness at $x$. With the mind at proper actions by {\it non-compact} Lie groups,
we address the following problem (where the orbit may be non-compact). \\


\hspace*{-.2in}\textbf{Problem.} {\it If $\G$ is proper at $x$ and $s$ is trivial on an invariant neighborhood of $x$, under what extra-hypothesis on $\OO$ (if any), does it follow that one can find an invariant neighborhood $U$ of $\OO$ in $M$ such that $\G|_{U}\cong \NN_{\OO}(\G)$.}




\newpage

\section{A more detailed introduction}

\subsection{\underline{Lie groupoid notations; properness}}
Throughout this paper, $\G$ will denote a Lie groupoid over a manifold $M$. Hence
$M$ is the manifold of objects, $\G$ is the manifold of arrows (in this paper all manifolds are assumed to be Hausdorff and second countable). 
We denote by $s, t: \G\rmap M$ the source and the target maps and by $gh$ the multiplication (composition) of arrows 
$g, h\in \G$ (defined when $s(g)= t(h)$).

\begin{definition}\label{def-properness} Let $\G$ be a Lie groupoid over $M$, $x\in M$. We say that $\G$ is
\begin{itemize}
\item \textbf{proper} if the map $(s, t): \G\rmap M\times M$ is a proper map. 
\item \textbf{$s$-proper} if the map $s: \G\rmap M$ is proper.
\item \textbf{proper at $x$} if the map $(s, t)$ is proper at $(x,x)$, i.e., if any sequence $(g_n)_{n\geq 1}$ in $\G$
with $(s(g_n), t(g_n))\to (x, x)$ admits a convergent subsequence.
\item \textbf{$s$-proper at $x$} if the map $s$ is proper at $x$.
\end{itemize}
\end{definition}

\begin{remark}\label{remark-properness}\rm\ 
It follows from Proposition \ref{prop: morita invariance of properness} that properness at $x$ implies properness at any point in the orbit $\OO$ of $x$. Also, all the obvious implications above are strict. On the other hand, Ehresmann's theorem implies that, when the fibers of $s$ are connected (which happens in many examples!), 
$s$-properness at $x$ is equivalent to the compactness of $s^{-1}(x)$. A version of Ehresmann's theorem (``at $x$'') implies 
that $s$-properness at $x$ is actually equivalent to the condition that $s^{-1}(x)$ is compact and $s$ is trivial around $x$. 
\end{remark}

\begin{example}\label{ex-action}\rm \ 
The main example to have in mind is the Lie groupoid associated to the action of a Lie group $G$ on a manifold $M$. 
Known as the action Lie groupoid, and denoted $G\ltimes M$, it is a Lie groupoid over $M$ whose manifold of arrows 
is $G\times M$, with source/target defined by $s(g, x)= x$, $t(g, x)= gx$ 
and the multiplication $(g, x)(h, y)= (gh, y)$
(defined when $x= hy$). The action groupoid $G\ltimes M$ is proper (or proper at $x$) if and only if 
the action of $G$ on $M$ is proper (or proper at $x\in M$, see e.g. \cite{DuKo}); $s$-properness corresponds to the
compactness of $G$. 
\end{example}

\begin{example}\label{ex-submersion} \rm \ For any submersion $\pi: X\rmap Y$ one has a groupoid over $X$:
\[ \G(\pi)= X\times_{\pi}X= \{(x, x')\in X\times X: \pi(x)=\pi(x') \},\]
with $s(x, y)= y$, $t(x, y)= x$ and multiplication $(x, y)(y, z)= (x, z)$. While $\G(\pi)$ is always proper, it is
$s$-proper if and only if $\pi$ is a proper map. When $Y$ is a point, the resulting groupoid $X\times X$
is known as the pair groupoid over $X$ (always proper, and $s$-proper if and only if $X$ is compact).
\end{example}

\begin{example}\label{ex-principal-bundles} \rm \ Associated to any principal $G$-bundle $\pi: P\rmap M$
there is a Lie groupoid over $M$, known as the gauge groupoid of $P$, denoted $\textrm{Gauge}(P)$, which is the quotient $(P\times  P)/G$
of the pair groupoid of $P$ modulo the diagonal action of $G$. It
is proper if and only if $G$ is compact; it is $s$-proper if and only if $P$ is compact.
\end{example}

\subsection{\underline{Orbits and the local model}} Given a Lie groupoid $\G$ over $M$, two points $x, y\in M$ are in the same orbit of $\G$ if there is an arrow $g: x\rmap y$ (i.e. $s(g)= x$, $t(g)= y$).
This induces the partition of $M$ by the \textbf{orbits} of $\G$. Each orbit carries a canonical smooth structure that makes
it into an immersed submanifold of $M$ (cf. e.g. \cite{MoMr}, but see also below).

Let $\mathcal{O}$ be an orbit. The linearization theorem at $\mathcal{O}$ provides a ``linear'' model for $\G$ around $\mathcal{O}$. This model is
just the tubular neighborhood in the world of groupoids, for $\G$ near $\mathcal{O}$. More precisely, over $\mathcal{O}$, $\G$ restricts to a Lie groupoid
\[ \G_{\mathcal{O}}:= \{ g\in \G: s(g), t(g)\in \mathcal{O} \}.\]
Its normal bundle in $\G$ sits over the normal bundle of $\OO$ in $M$:
\begin{equation}
\xymatrix{
\mathcal{N}_{\mathcal{O}}(\G):= T\G/T\G_{\mathcal{O}} \ar[r]_-{dt} \ar@<+1ex>[r]^-{ds}&   \mathcal{N}_{\mathcal{O}}:= TM/T\mathcal{O}}
\end{equation}
as a Lie groupoid. The groupoid structure is induced from the groupoid structure of the tangent groupoid $T\G$ (a groupoid over $TM$); i.e. the structure maps (source, the target and the multiplication) are induced
by the differentials of those of $\G$.

There are various ways of realizing $\mathcal{N}_{\mathcal{O}}(\G)$
more concretely. For instance, since $\G_{\mathcal{O}}= s^{-1}(\mathcal{O})$, its normal bundle in $\G$ is just the pull-back of $\mathcal{N}_{\mathcal{O}}$ by $s$:
\[ \mathcal{N}_{\mathcal{O}}(\G)= \{(g, v)\in \G_{\mathcal{O}}\times \mathcal{N}_{\mathcal{O}}: s(g)= \pi(v)\}.\]
With this, the groupoid structure comes from a (fiberwise) linear action of $\G_{\mathcal{O}}$ on $\mathcal{N}_{\mathcal{O}}$. That means that any arrow
$g: x\rmap y$ in $\G_{\OO}$ induces a linear isomorphism
\begin{equation}
\label{isotropy-action}
g: \mathcal{N}_x\rmap \mathcal{N}_y .
\end{equation}
Explicitly: given $v\in \mathcal{N}_x$, one chooses a curve $g(t): x(t)\rmap y(t)$ in $\G$ with $g(0)= g$ and such that $\dot{x}(0)\in T_xM$ represents $v$, and then $gv$ is represented by $\dot{y}(0)\in T_yM$.
With these, the groupoid structure of $\mathcal{N}_{\mathcal{O}}(\G)$ is given by 
\[ s(g, v)= v, \ \  t(g, v)= gv, \  \ (g, v)(h, w)= (gh, w) .\]

\begin{definition} 
The Lie groupoid $\mathcal{N}_{\mathcal{O}}(\G)$ is called \textbf{the linearization of $\G$ at $\OO$}. 
\end{definition}

\begin{example} \rm \ When $x$ is a fixed point of $\G$, i.e. $\mathcal{O}_x= \{x\}$, then $\mathcal{N}_{\mathcal{O}_x}(\G)$
is the groupoid associated (cf. Example \ref{ex-action}) to the action (\ref{isotropy-action}) of $G_x$ on $\NN_x$. 
\end{example}

The Lie groupoid $\mathcal{N}_{\mathcal{O}}(\G)$ can be further unravelled by choosing a point $x\in \mathcal{O}$. 
The outcome is a bundle-description of the linearization, which is closer to  the familiar one from group actions. Here are the details.
We fix $x\in \mathcal{O}$ and we use the notation $\mathcal{O}= \mathcal{O}_x$. Associated to $x$ there are:

\begin{itemize}
\item the \textbf{$s$-fiber at $x$}, $P_x:= s^{-1}(x)$, which is a submanifold of $\G$.
\item the \textbf{isotropy group at $x$}, $G_x=s^{-1}(x)\cap t^{-1}(x)$ which is a Lie group with multiplication and  smooth structure induced from the ones of $\G$.
\item the \textbf{isotropy representation at $x$}, $\NN_x= T_xM/T_x\OO_x$, viewed as a representation of $G_x$ with the action (\ref{isotropy-action}) described above.
\item the \textbf{isotropy bundle at $x$}, which is $P_x$ viewed as a principal $G_x$-bundle with the action induced by the multiplication of $\G$
\begin{equation}
\label{orbit-bundle}  
t: P_x\rmap \mathcal{O}_x .
\end{equation}
\end{itemize}
It is this description that provides the orbit with its smooth structure:
since the action of $G_x$ on $P_x$ is free and proper, it induces a smooth structure on $\mathcal{O}_x$, making 
(\ref{orbit-bundle}) into a smooth bundle and $\mathcal{O}_x$ into an immersed submanifold of $M$. 
With these, 
\begin{itemize}
\item the normal bundle $\mathcal{N}_{\mathcal{O}}$ is isomorphic to the associated vector bundle
\[ \mathcal{N}_{\mathcal{O}}\cong P_x\times_{G_x} \mathcal{N}_x, \]
i.e., the quotient of $P_x\times \NN_x$ modulo the action $\gamma \cdot (g, v)= (g\gamma^{-1}, \gamma v)$ of $G_x$. 
\item similarly, the space of arrows of the linearization is
\[ \mathcal{N}_{\mathcal{O}}(\G) \cong (P_x\times P_x)\times_{G_x} \mathcal{N}_x. \]
\item In this new description of $\mathcal{N}_{\mathcal{O}}(\G)$, the groupoid structure is given by 
\[ s([p, q, v])= [q, v], t([p, q, v])= [p, v], [p, q,v]\cdot [q, r, v]= [p, r, v].  \]
\end{itemize}

\subsection{\underline{Linearizability}} Here we explain the meaning of ``linearizability''. 

\begin{definition}\label{def-invariance} Let $\G$ be a Lie groupoid over $M$. For any open set $U\subset M$,
\begin{itemize}
\item we denote by $\G|_{U}$ the restriction of $\G$ to $U$, which is the Lie groupoid consisting of arrows of $\G$ which start and end at points in $U$.
\item we say that $U$ is \textbf{invariant with respect to $\G$} (or simply $\G$-invariant) if it contains every
orbit that it meets. 
\end{itemize}
\end{definition}

\begin{example} \rm \ For the linear model, using the bundle picture,
open sets $U\subset \NN_{\OO}$ correspond to $G_x$-invariant open neighborhoods $P$ of $P_x\times \{0\}$ in $P_x\times \NN_x$ 
($U= P/G_x$); $U$ is invariant w.r.t. $\NN_{\OO}(\G)$ if and only if $P$ is a product neighborhood, i.e. of type $P_x\times W$, with $W$ a neighborhood of
the origin in $\NN_x$ (necessarily $G_x$-invariant). 
\end{example}

\begin{definition} \label{versions-linearization} Let $\G$ be a Lie groupoid over $M$, $\OO$ an orbit. 
We say that $\G$ is:
\begin{itemize}
\item[(1)] \textbf{linearizable} at $\mathcal{O}$ if there are neighborhoods of $\OO$, $U\subset M$, $V\subset \NN_{\OO}$, 
and an isomorphism of groupoids $\G|_{U}\cong \mathcal{N}_{\mathcal{O}}(\G)|_{V}$ which is the identity on $\G_{\OO}$. 
\item[(2)] \textbf{semi-invariant linearizable} at $\mathcal{O}$ if $V$ can be chosen invariant.
\item[(3)] \textbf{invariant linearizable} at $\mathcal{O}$ if both $U$ and $V$ can be chosen invariant.
\end{itemize}
(Here, the invariance of $U$ is w.r.t. $\G$, while the one of $V$ is w.r.t. $\mathcal{N}_{\mathcal{O}}(\G)$).
\end{definition}

\begin{remark}\rm \ As we shall see in subsection \ref{Variations}, if $G_x$ is compact ($x\in \OO$), 
one can replace $V= \NN_{\OO}$ in both (2) and (3). If also $\OO$ is compact, the same applies to (1).
\end{remark}

\begin{remark}
It may be tempting to require the conditions above to be realized for arbitrarily small $U$'s. For (1) this is not an issue.
For (2), this requirement would not make a huge difference (e.g., when $G_x$ is compact, (2) implies any way that one can achieve 
$\G|_{U}\cong \NN_{\OO}(\G)$ for arbitrarily small $U$'s, cf. section \ref{Variations}); however, the requirement would
bring in extra-information about the linearization rather than about $\G$ itself.
In the case of (3) however, requiring arbitrarily small $U$'s would be too strong, as it would imply that $\OO$ is stable
(see Remark \ref{remark-s-triviality} above); in particular it would exclude examples such as proper actions of non-compact
Lie groups.
\end{remark}

\begin{example} \rm \ If $\G= G\ltimes M$ comes from a Lie group action (Example \ref{ex-action}), we recover the usual notion of orbit
and isotropy group; the isotropy bundle is the principal $G_x$-bundle $G\rmap G/G_x$; $\mathcal{N}_{\mathcal{O}}(\G)$ is 
the groupoid associated to the action of $G$ on the tube $G\times_{G_x} \mathcal{N}_x$. By the well-known slice theorem  
(or ``tube theorem'', see e.g. (2.4.1) in \cite{DuKo}), $\G$ is invariant
linearizable at $\mathcal{O}$ if the action is proper at $x$.
\end{example}

\begin{example} \rm \ A very instructive example is that of the groupoid $\G= \G(\pi)$ associated to a submersion $\pi: X\rmap Y$
(Example \ref{ex-submersion}). The orbits are precisely the fibers $\mathcal{O}_y= \pi^{-1}(y)$ with $y\in \pi(X) \subset Y$. At such an orbit $\OO_y$, 
\begin{enumerate}
\item $\G$ is linearizable (after all, $\G(\pi)$ is always proper). 
\item $\G$ is semi-invariant linearizable if and only if $\pi$ is semi-trivial around $y$
(i.e., on a neighborhood of $\OO_y$, $\pi$ is equivalent to a projection). 
\item $\G$ is invariant linearizable if and only if $\pi$ is trivial 
around $y$ (i.e., on a saturated neighborhood of $\OO_y$, $\pi$ is equivalent to a projection). 
\end{enumerate}
Recall that, by Ehresmann's theorem, local triviality is achieved when $\pi$ is proper (i.e., when $\G(\pi)$ is $s$-proper, cf. Example \ref{ex-submersion}).
Also, Weinstein shows in \cite{Wein3} that semi-triviality around $y$ is typically achieved when $\mathcal{O}_y$ is of finite type. Actually, this is the only
property of finite type manifolds that will be used in this paper; see also subsection \ref{var-on-hypothesis} (nevertheless, recall \cite{Wein3} that a manifold is called 
of finite type if it admits a proper Morse function with a finite number of critical points). 
\end{example}

\begin{example}\rm \ Another important class of groupoids comes from foliation theory \cite{MoMr}. Let $(M, \mathcal{F})$ be a foliated manifold
and let $\G$ be the associated holonomy groupoid over $M$ (assumed Hausdorff here). Its arrows are holonomy classes of leafwise paths; the orbits are precisely
the leaves of the foliation; the isotropy group at $x$ is the holonomy group $\textrm{Hol}_x$; the isotropy bundle of a leaf $L$ is the holonomy bundle $\tilde{L}\rmap L$;
the resulting linear model at a leaf $L$ is the holonomy groupoid
associated to the linear foliation on $\NN_{L}= \tilde{L}_x\ltimes_{\textrm{Hol}_x} \NN_x$. This is precisely the local model that appears in classical Reeb stability.
The theorem is equivalent to invariant linearization of $\G$; the standard hypothesis are that $L$ is compact and the
holonomy at $x$ is finite; i.e. precisely the $s$-properness of $\G$ at $x$.
\end{example}

\begin{example}\rm \ A similar discussion applies when $\G$ is the symplectic groupoid (see e.g. \cite{DuZu}) associated to a Poisson manifold $(M,\pi)$. The resulting linearizability problems are closely related to normal forms in Poisson geometry. This time
however, all the linearization versions are interesting. For instance, Conn's linearization theorem \cite{Conn2, CrFe} and  
the local form of \cite{Ionut} are related to invariant linearizability of $\G$. 
\end{example}

\subsection{\underline{Existing results}} \label{sec: existing results} 
As we mentioned in the introduction, there has been some confusion regarding the precise statement of the results on the linearization of proper Lie groupoids. By carefully combining the existing proofs in the literature, the best possible statement one arrives at  is
\begin{itemize}
\item If $\G$ is a Lie groupoid over $M$ which is proper and \emph{$s$-locally trivial}\footnote{The appearance of this condition in [20] is somehow enigmatic. On one hand, the theorem
in the fixed point case (Theorem 2.3 in \emph{loc.cit.}) does not state this as an assumption, although it
is used in the proof (see below for details). On the other hand, the general case (i.e. Theorem 2.4 
in \emph{loc.cit.}) is, as the author remarks, an immediate consequence of the Theorem 2.3  
and a result of Weinstein (Theorem 9.1 in \cite{Wein3}). Hence the fact that s-locally triviality appears in the
statement of Theorem 2.4 is rather surprising because, as originally stated, neither Theorem 2.3 of \cite{Zung}
nor Theorem 9.1 of \cite{Wein3} require this condition. However, the condition \emph{should} stay in the theorem
because, as we mentioned, it is used in the proof of Theorem 2.3.}, then $\G$ is linearizable at $\OO$.
\end{itemize}
This is a weaker version of our Corollary \ref{cor1}.

Here is a short summary of the existing results \cite{Wein2, Wein3, Zung, DuZu, Hessel, MoMr, DuKo, CrFe, Ionut, Trent}. 
First, a detailed analysis of \cite{Wein3} reveals that it is entirely about \emph{semi-invariant} linearization (although this is not explicitly stated there). The main parts of \cite{Wein3} are:
\begin{itemize}
\item it reveals the finite type condition in relation with (semi-invariant) linearization. 
\item it shows that, for general proper groupoids, the (semi-invariant) linearization at a finite type orbit can be deduced from the
linearization at fixed points (we call this Weinstein's trick). 
\item it gives many interesting and illuminating examples.
\end{itemize}
The most difficult step was taken by Zung \cite{Zung}, where he states that:
\begin{itemize} 
\item The linearization theorem is true at fixed points of proper Lie groupoids.
\end{itemize}
Note, however, that Zung's proof of the fixed point case starts with several ``simplifying assumption" (the base being a closed invariant ball, and $\G$ being $s$-trivial), for which he refers to \cite{Wein3}. In fact, at the end of page 846 of \cite{Zung}, it is stated that \begin{quote} ``...A simple fact already observed by Weinstein [31] is that, due to the properness, any neighborhood of $x_0$ in $B$ will contain a closed ball-like neighborhood saturated by compact orbits of $\Gamma$. By shrinking $B$ if necessary, we can assume that $B$ is a closed ball, the orbits on $B$ are compact, and the source map $s : \Gamma \to B$ is a trivial fibration." \end{quote}
In this statement, Zung is making use of Theorem 3.3 of \cite{Wein3} which states that a fixed point of a \emph{source locally trivial} proper groupoid is stable. The problem with this is that Zung's version of the theorem (Theorem 2.3 in \cite{Zung}) would itself imply the stability of the fixed point. But the same \cite{Wein3} shows by means of examples that stability can actually fail. This problem is only partially fixed in \cite{DuZu} (see also the final part of our Section \ref{remarks-on-the-proof} bellow).
 
We should also mention \cite{Trent}, where the relevance of Morita equivalence to linearizability was indicated for the first time.  More recently (and independent of our work), \cite{Hessel} also shows that ``Weinstein's trick" can be used to pass from Zung's theorem to the general linearization theorem without using the finite type condition.


Of course, there are also the classical results that correspond to the linearization problem, such as the Reeb stability, for which we used \cite{MoMr},
or the tube theorem, for which we used \cite{DuKo} (in particular, we adopted the weaker notion of ``proper at a point''). The Poisson case \cite{CrFe, Ionut} played a special role (see Remark \ref{remarks-on-the-proof}).

From the literature mentioned above, the most influential work is the one of Weinstein; in some sense, we reinterpret  and rewrite \cite{Wein3} so that it not only applies to arbitrary orbits but it also gives a simple proof to Zung's theorem.  
We would also like to mention that, although most of our work was carried out independently of \cite{Hessel}, we did let ourselves be influenced by it in the final writing.


\subsection{\underline{This paper}} The main points of our paper, which are the subjects of the three main sections of the paper are:
\begin{itemize}
\item A simple, geometric proof of Zung's theorem without the extra ``simplifying assumption" that he makes in his proof.
\item A more conceptual understanding of the linearizability problem; in particular, Weinstein's trick appears as a manifestation
of Morita invariance, and is independent of the extra hypothesis of the orbit being a manifold of finite type. The outcome is the main theorem from the introduction, which is an improvement of the 
theorem that follows from the above mentioned results, and which establishes the linearization result that is valid with only the properness hypothesis on $\G$. 
\item A clarification of the several versions of linearizability (in which invariance conditions are posed), such as those given in Definition \ref{versions-linearization},
and a clarification of their relationship with general linearizability. A brief way to describe our results is that linearizability at $\OO$ gives rise to a new local model
in an {\it invariant} neighborhood. This model is constructed out of an equivariant bundle.  The main conclusion is that the 
passage from general linearizability to its other versions always translates into questions about equivariant bundles (e.g.: the corollaries from the introduction).
\end{itemize}

\subsection*{\underline{Acknowledgements}:} We would like to thank the authors of \cite{Hessel} for providing us with an early copy of their work.
We would also like to thank Ionu\c{t} M\v{a}rcu\c{t} for his comments on the first (written and unwritten) versions of the paper. We extend our thanks also to Matias  L. del Hoyo for pointing out a mistake in a an example in a preliminary version of this paper. We also benefitted from several discussions with David Martinez Torres. Last but not least, we are indebted to Rui Loja Fernandes for several illuminating conversations since the early stages of this project, which lead to a greater understanding of proper groupoids and their linearization.

\section{The fixed point case; a short proof of Zung's theorem}

In this section we give a new proof to Zung's theorem, i.e. the theorem from the introduction in the case
of fixed points $x$ (i.e. when $\OO= \{x\}$):

\begin{theorem} \label{th-Zung}
Let $\G$ be a Lie groupoid over $M$, with fixed point $x\in M$. If $\G$ is proper at $x$, then $\G$ is linearizable at $x$.
\end{theorem}

\subsection{\underline{Restrict to small neighborhoods}} 
We first show the existence of neighborhoods over which $\G$ becomes nicer.



\begin{proposition}\label{prop-fixed-points}
Let $\G$ be a Lie groupoid over $M$ with a fixed point $x\in M$. If $\G$ is proper at $x$
then there exists a neighborhood $W$ of $x$ such that $\G|_{W}$ is proper
and such that there is an open embedding of type
\begin{equation}\label{eq-U} 
(r, s): \G|_{W}\rmap G_x\times W 
\end{equation}
for some $r$ satisfying $r(\gamma)= \gamma$ for $\gamma\in G_x$, $r(1_w)= 1_{G_x}$ for $w\in W$.
\end{proposition}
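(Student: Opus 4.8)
The plan is to realize $\G$ near $x$ as a trivial tube $G_x\times W$ fibered by the source map, using that properness at $x$ forces the isotropy group to be compact. First I would record the two consequences of properness that do the work. Since $x$ is a fixed point its orbit is $\{x\}$, so $s^{-1}(x)=s^{-1}(x)\cap t^{-1}(x)=G_x$; and since $G_x=(s,t)^{-1}(x,x)$ is closed, properness at $x$ shows that every sequence in $G_x$ subconverges inside $G_x$, i.e. $G_x$ is compact. Thus $G_x=s^{-1}(x)$ is a \emph{compact} fiber of the submersion $s\colon\G\to M$, which is exactly the situation in which a version of Ehresmann's theorem applies.

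The technical heart is to trivialize $s$ near this compact fiber in a way compatible with the unit section. I would choose an Ehresmann connection for $s$ (a complement $H$ to $\ker ds$ in $T\G$, defined near $G_x$) with the extra property that $H$ agrees with $du(TM)$ along the unit section $u\colon w\mapsto 1_w$. Fixing a ball chart $W$ around $x$ and horizontally lifting the radial segments from $x$ to $w\in W$ then defines a map $\Phi\colon G_x\times W\to\G$, where $\Phi(\gamma,w)$ is the endpoint of the lift starting at $\gamma$. Compactness of $G_x$ guarantees the lifts exist over a uniform $W$, and a computation of $d\Phi$ along $G_x\times\{x\}$ (it is the inclusion on $\ker ds=TG_x$ and carries the $W$-directions onto $H$) shows that, after shrinking $W$, $\Phi$ is a diffeomorphism onto an open neighborhood $\Omega$ of $G_x$. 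By construction $s\circ\Phi(\gamma,w)=w$, $\Phi(\gamma,x)=\gamma$ (lift of a constant path), and $\Phi(1_{G_x},w)=1_w$ (the unit section is itself horizontal). Setting $r=\mathrm{pr}_{G_x}\circ\Phi^{-1}$ then gives $(r,s)=\Phi^{-1}$ on $\Omega$, an open embedding with $r|_{G_x}=\mathrm{id}$ and $r(1_w)=1_{G_x}$.

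It remains to bring in properness to confine $\G|_W$ into the tube $\Omega$, which is where the genuinely groupoid-theoretic hypothesis enters (the construction of $\Phi$ used only compactness of $G_x$). I would argue by contradiction: if no neighborhood worked there would be arrows $g_n\notin\Omega$ with $s(g_n),t(g_n)\to x$; properness at $x$ extracts a subsequence converging to some $g_\infty$ with $s(g_\infty)=t(g_\infty)=x$, hence $g_\infty\in G_x\subset\Omega$, contradicting that $\Omega$ is open and $g_n\notin\Omega$. So for small $W$ we have $\G|_W\subset\Omega$, and then $(r,s)$ restricts to the desired open embedding $\G|_W\hookrightarrow G_x\times W$. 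The same confinement yields properness of $\G|_W$: for compact $A\times B\subset W\times W$ the set $(s,t)^{-1}(A\times B)$ is contained in the compact set $\Phi(G_x\times A)$ and is closed there, hence compact. The main obstacle I anticipate is arranging the trivialization $\Phi$ to be simultaneously compatible with $s$ and with the unit section; everything else is either a standard tubular-neighborhood argument or a direct application of the sequential definition of properness at $x$.
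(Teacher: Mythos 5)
Your proof is correct, and its overall skeleton matches the paper's: construct a source-compatible tube $G_x\times W\hookrightarrow \G$ around the compact isotropy group, use properness at $x$ (via the same sequential contradiction) to confine $\G|_W$ into the tube, and deduce properness of $\G|_W$ by the same closed-subset-of-a-compact argument. Where you genuinely differ is in how the tube is built and how the unit condition is met. The paper takes a tubular neighborhood $\mathcal{N}$ of $G_x$ in $\G$ with retraction $r:\mathcal{N}\to G_x$, observes that $(r,s)$ has invertible differential along $G_x$ and is the identity there, and shrinks (using compactness of $G_x$) to get a diffeomorphism onto a product $G_x\times s(\mathcal{N})$; the normalization $r(1_w)=1_{G_x}$ is then arranged \emph{a posteriori} by the purely groupoid-theoretic correction $r'(g)=r(g)\,r(1_{s(g)})^{-1}$. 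You instead build the trivialization by parallel transport along an Ehresmann connection for $s$ chosen to contain $du(TM)$ along the unit section, which makes the unit condition automatic but requires the extra apparatus of horizontal lifts of radial paths (whose uniform existence over $W$ again rests on compactness of $G_x$). The two constructions are interchangeable here; the paper's is slightly more economical (tubular neighborhood theorem plus the inverse function theorem, no ODE flows), while yours has the merit of making explicit both the compactness of $G_x$, which the paper uses without comment, and the compatibility of $r$ with units from the start rather than by a final correction.
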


\begin{proof} Let $\mathcal{N}$ be a tubular neighborhood of $G_x$ in $\G$
and let $r: \mathcal{N}\rmap G_x$ be the projection map. The map
$(r, s): \mathcal{N}\rmap G_x\times M$ is the identity over $G_x$ (where $G_x$ is embedded in the second space as $G_x\times \{x\}$),
and its differential at all points in $G_x$ is an isomorphism. Hence, eventually making $\mathcal{N}$
smaller, we may assume that $(r, s)$ is a diffeomorphism into an open subset of $G_x\times M$
containing $G_{x}\times \{x\}$. Since $G_x$ is compact, any such open set contains one of type $G_x\times B$
with $B$ a neighborhood of $x$ in $M$. Hence, shrinking $\mathcal{N}$ again, we may assume that
we have a diffeomorphism
\[ (r, s): \mathcal{N} \rmap G_x\times s(\mathcal{N}) \]
Next, there is a neighborhood $W$ of $x$ in $M$ such that $\G|_{W}\subset \mathcal{N}$. If not,
then there is a sequence $g_n\in \G$ with $s(g_n), t(g_n)\to x$ and such that $g_n\notin \mathcal{N}$.
Properness at $x$ allows us to assume that $g_n\to g$, with $g\in \G$. Clearly, $g$ must be in $G_x$,
but this is impossible because $\mathcal{N}$ is a neighborhood of $g$ which does not contain any $g_n$.

Choose $W$ such that $\G|_{W}\subset \mathcal{N}$. We remark that $\G|_{W}$ is proper. Indeed, for $K, L\subset W$ compact,
the image of $s^{-1}(K)\cap t^{-1}(L)$ by the diffeomorphism $(r, s)$ is a closed inside $G_x\times K$, hence it is compact. 

Finally, to achieve $r(1_y)= 1$, put $r'(g)= r(g)r(1_{s(g)})^{-1}$.
\end{proof}

\subsection{\underline{Deforming $\G$ into its linearization}}
\label{Deforming G into its linearization}
We will use the notations 
\[ K= G_x, E= K\times \Rr^n\]
and we think of $E$ as a vector bundle over $K$. Hence, for $g= (a, v)\in E$, $\epsilon g= (a, \epsilon v)$.  
By Proposition \ref{prop-fixed-points}, we may assume that $\G$ is proper, $M= \mathbb{R}^n$, $\G$ is an open in $E$ containing the zero section, the source
$s$ is the projection, and the unit at any $v$ is $(1, v)$. 
Next, we deform $\G$ into its linearization $K\ltimes \mathbb{R}^n$. For each $\epsilon$ we consider
\[\G_{\eps} = \set{g \in K \times \Rr^n: \eps g \in \G}\subset K\times \mathbb{R}^n,\]
sitting over $\Rr^n$, with structure maps 
\[s_{\eps}(g)= s(g), t_{\eps}(g) = \frac{1}{\eps} t(\eps g), m_{\eps}(g,h) = \frac{1}{\eps}m(\eps g, \eps h) \text{ and } i_{\eps}(g) =  \frac{1}{\eps} i(\eps g).\]
By definition, $\G_{0}$ is the linearization $K\ltimes \Rr^n$. 
It will be convenient to organize the different $\G_{\eps}$'s into one groupoid. Hence consider
\[\tilde{\G} = \set{(g, \eps) \in E \times \mathbb{R} : \eps g \in \G}\]
and denote its source, target, multiplication and inversion by $\sigma$,  $\tau$, $\mu$ and $\iota$, respectively. Hence
$\sigma(g, \eps) = (s_{\eps}(g), \eps)$, $\mu((g, \eps),(h,\eps))=(m_{\eps}(g,h),\eps)$ etc. 

\begin{lemma} $\tilde{\G}$ is open in $E \times \Rr$ and it is a proper Lie groupoid. 
\end{lemma}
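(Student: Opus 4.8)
The plan is to verify, in order, that $\tilde{\G}$ is open, that it carries the structure of a smooth groupoid with smooth and submersive structure maps, and finally that the map $(\sigma,\tau)$ is proper. Openness is immediate: the map $\Theta\colon E\times\Rr\rmap E$, $\Theta(g,\eps)=\eps g$ (in coordinates $((a,v),\eps)\mapsto(a,\eps v)$), is continuous, and $\tilde{\G}=\Theta^{-1}(\G)$ is the preimage of the open set $\G$. Hence $\tilde{\G}$ is open in $E\times\Rr$, so it is a manifold.

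For the groupoid structure the only delicate point — and the real heart of the lemma — is smoothness of the structure maps across $\eps=0$, where the definitions involve dividing by $\eps$; this is exactly where the fixed-point hypothesis enters. First I would note that $s_\eps=s$ carries no $\eps$, so $\sigma(a,v,\eps)=(v,\eps)$ is smooth and a submersion. For the target, the point is that the $\Rr^n$-valued map $(g,\eps)\mapsto t(\eps g)$, which is smooth on $\tilde{\G}$, vanishes identically on the slice $\eps=0$: for $g=(a,v)$ the arrow $(a,0)$ has source the fixed point $0$, so its target lies in the orbit $\{0\}$, whence $t(a,0)=0$. By Hadamard's lemma applied in the $\eps$-direction (on a product neighbourhood $U\times(-\delta,\delta)\subset\tilde{\G}$, which exists by openness), $t(\eps g)=\eps\,t_\eps(g)$ with $t_\eps$ smooth down to $\eps=0$, where $t_0(g)=\partial_v t(a,0)\,v$ is the isotropy action. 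For multiplication and inversion the division is even easier: writing $m(\eps g,\eps h)=(c,u)$, the $\Rr^n$-component is $u=s(\eps h)=\eps\,s(h)$, so $m_\eps(g,h)=(c,s(h))$ involves no genuine division; and $\iota$ reduces to $t_\eps$ in the same way. The groupoid axioms then hold slice-by-slice: for $\eps\neq 0$ the rescaling $R_\eps(g)=\eps g$ is an isomorphism of $\G_\eps$ onto $\G$ (covering $v\mapsto\eps v$), and $\G_0=K\ltimes\Rr^n$ is a groupoid, so all axioms, together with submersivity of $\tau(g,\eps)=(t_\eps(g),\eps)$, hold for each $\eps$ and hence on $\tilde{\G}$.

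It remains to prove properness, i.e. that $(\sigma,\tau)\colon\tilde{\G}\rmap(\Rr^n\times\Rr)^2$ is proper, which I would do via the sequential criterion. Given $(g_n,\eps_n)\in\tilde{\G}$ with $(\sigma,\tau)(g_n,\eps_n)$ convergent, write $g_n=(a_n,v_n)$. Then $v_n=s(g_n)$ converges, $\eps_n$ converges, and by compactness of $K$ a subsequence of $a_n$ converges; hence $(g_n,\eps_n)\to(g_\infty,\eps_\infty)$ in $E\times\Rr$. To see the limit lies in $\tilde{\G}$ I would feed the rescaled arrows $\eps_n g_n\in\G$ into the properness of $\G$: their sources $\eps_n v_n$ and targets $\eps_n t_{\eps_n}(g_n)$ converge, so a subsequence of $\eps_n g_n$ converges \emph{inside} $\G$; since also $\eps_n g_n\to\eps_\infty g_\infty$ in $E$ and $E$ is Hausdorff, $\eps_\infty g_\infty\in\G$, i.e. $(g_\infty,\eps_\infty)\in\tilde{\G}$. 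This produces the required convergent subsequence and proves properness.

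The main obstacle is the smoothness at $\eps=0$: everything hinges on the observation that division by $\eps$ only ever hits $\Rr^n$-components that vanish on the zero slice — for the target this is precisely the fixed-point condition $t(a,0)=0$ feeding Hadamard's lemma, while for the multiplication it is the harmless identity $s(\eps h)=\eps\,s(h)$. Once this is in place, properness follows cleanly by combining the compactness of $K$, the fact that $s$ is a projection (which controls the $\Rr^n$-coordinate of $g_n$), and the given properness of $\G$.
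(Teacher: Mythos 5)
Your proof is correct, and its overall skeleton matches the paper's: openness as the preimage of the open set $\G$ under the continuous rescaling $(g,\eps)\mapsto\eps g$, and properness by combining compactness of $K$ (to extract a limit $(g_\infty,\eps_\infty)$ in $E\times\Rr$) with properness of $\G$ applied to the rescaled arrows $\eps_n g_n$ and Hausdorffness of $E$ to conclude $\eps_\infty g_\infty\in\G$ --- that last step is the paper's argument in substance. Where you genuinely diverge is in the crux of the lemma, the behaviour of the structure maps at $\eps=0$. The paper checks that $t_\eps(g_\eps)\to t_0(g_0)$ along curves $g_\eps$ depending smoothly on $\eps$, using the curve description of the isotropy action to identify the limit; as written this establishes only (sequential) continuity at $\eps=0$, with smoothness left implicit. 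You instead observe that the smooth map $(g,\eps)\mapsto t(\eps g)$ vanishes identically on the slice $\eps=0$ --- which is precisely the fixed-point hypothesis, since any arrow with source the fixed point has target the fixed point --- and then apply Hadamard's lemma on local product neighbourhoods to factor out $\eps$, so that $t_\eps$ extends to a genuinely smooth map down to $\eps=0$ whose value there is the isotropy action. This buys you more than the paper's computation: it proves smoothness outright (which is what the Lie groupoid assertion actually requires), identifies $\G_0$ as $K\ltimes\Rr^n$ as a byproduct, and handles inversion for free since its $\Rr^n$-component is again $t(\eps g)$. Your treatment of multiplication, namely that the division by $\eps$ only hits the component $s(\eps h)=\eps s(h)$ so no genuine division occurs, is the same observation the paper makes via $m(g,h)=(m'(g,h),s(h))$. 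One point you gloss slightly is submersivity of $\tau$ at $\eps=0$; this is harmless (e.g.\ $\tau=\sigma\circ\iota$ with $\iota$ an involutive diffeomorphism, or note that a map preserving the $\eps$-coordinate and submersive on each slice is submersive), and the paper does not address it explicitly either.
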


\begin{proof} 
The openness of $\tilde{\G}$ follows from the one of $\G$ and continuity of $(\eps, g)\mapsto \eps g$.
For the smoothness of the structure maps, the only problem is continuity at $\eps= 0$. For the target we check that, for
$g_{\eps}= (a_{\eps}, v_{\eps})$ depending smoothly on $\eps$ near $0$, 
$t_{\eps}(g_{\eps})$ goes to $t_0(g_0)$ when $\eps\to 0$. We have $t_0(g_0)= a_0v_0$ and we use the description of the isotropy action in terms of curves
(see the introduction). 
We choose the curve $g(\eps)= (a_{\eps}, \eps v_{\eps})$, which has $g(0)= (a_{0},0)$ and the derivative of $s(g_{\eps})$ at $0$ is $v_0$.
Hence $a_0v_0$ is the derivative at $0$ of $t(g_{\eps})$, i.e. the limit of $\frac{1}{\eps}t(g_{\eps})= t_{\eps}(g_{\eps})$. 
For the multiplication, we choose $g_{\eps}$ as above and $h_{\eps}= (b_{\eps}, w_{\eps})$ s.t. they are composable in $\G_{\eps}$. 
The multiplication of $\G$ is of type $m(g, h)= (m'(g, h), s(h))\in K\times \Rr^n$; then,  
\[ m_{\eps}(g_{\eps}, h_{\eps})= (m'(\eps g_{\eps}, \eps h_{\eps}), s(h_{\eps}))\to (m'((a_0, 0)m (b_0, 0)), v_0)= (a_0b_0, w_0),\]
when $\eps\to 0$. This is precisely which is $m_{0}(g_0, h_0)$.

For properness, let $(g_n, \eps_n)$ be a sequence in $\tilde{\G}$ with convergent source and target. Writing $g_n= (a_n, v_n)$, that means that,
for some $v, w\in \Rr^n$, $\eps\in \Rr$,  
\[ \sigma(g_n, \eps_n)= (v_n,\eps_n)\to (v, \eps),  \tau(g_n, \eps_n)= (\frac{1}{\eps_n}t(\eps_n g_n),\eps_n)\to (w, \eps).\]
To show that $(g_n, \eps_n)$ has a subsequence  convergent in $\tilde{\G}$, we may assume that
$g_n= (a_n, v_n)\to (a, v)=: g$ in $E$, for some $a\in K$ (since $K$ is compact). To show that $(g, \eps)\in \tilde{\G}$, 
note that both the source as well as the target of $\eps_ng_n\in \G$ are convergent (to $\eps u$ and $\eps w$),
hence, since $\G$ is proper, the limit $\eps g$ must be in $\G$.
\end{proof}

\subsection{\underline{Multiplicative vector fields}} To relate $\G_{\eps}$ at different $\eps$'s, we will use flows of multiplicative vector fields. Recall that, in general, 
given a Lie groupoid $\HH$ over $N$, a vector field $X$ on $\HH$ is called \textbf{multiplicative} if, as a map $X: \HH\rmap T\HH$,
it is a morphism of groupoids i.e., it is $s$ and $t$-projectable to a vector field on $N$ and 
\[ (dm)_{(g, h)}(X_g, X_h)= X_{gh} \]
for all $g, h\in \HH$ composable. We will need the following basic property \cite{MkXu}.

\begin{lemma}\label{mult-flow}  If $X$ is a multiplicative vector field on the groupoid $\HH$ then its flow preserves the groupoid structure. More precisely,
for each $\epsilon$, the space $\mathcal{D}_{\epsilon}(X)\subset \HH$ of points at which the flow $\phi_{X}^{\epsilon}$ is defined is an open subgroupoid of $\HH$, 
with base the similar space $\mathcal{D}_{\epsilon}(V)\subset M$ of the base vector field $V$, and
\[ \phi_{X}^{\eps}: \mathcal{D}_{\eps}(X)\rmap \HH\]
is a morphism of groupoids covering the flow of $V$.
\end{lemma}

Next, for the groupoids $\G_{\epsilon}$, there is a small unpleasant problem: since $t_{\epsilon}$ varies with $\epsilon$, the space
of composable arrows varies as well. Since $s_{\epsilon}= s$ stays fixed, it is better to consider, for any groupoid $\HH$, 
\[\HH^{[2]} =\set{(p,q) \in \HH\times \HH: s(p) = s(q)},\]
\[\bar{m} : \HH^{[2]} \to \HH, \quad \bar{m}(p,q) = m(p, i(q))= pq^{-1}.\]
The key remark is that the source $s$, the unit map $u$ and the modified multiplication $\bar{m}$ encode the entire groupoid structure. 
Indeed, 
\[ t(g)  = u^{-1}\bar{m}(g,g),\ \  i(g) = \bar{m}(u\circ s(g), g), \ \  m(g,h) = \bar{m}(g, h^{-1}).\]
Of course, we can reformulate the notion of groupoid in terms of $(\HH, s, u, \bar{m})$.
 We only need  here the following immediate consequence of the last set of formulas.

\begin{lemma} Given the Lie groupoids $\HH$ over $M$, $\KK$ over $N$, and given two smooth maps
 $F: \HH\rmap \K$, $f: M\rmap N$ which are compatible with $s$ and $u$ (i.e. $sF= fs$, $Fu= uf$), 
$F$ is a groupoid morphism if and only if
\[F(\bar{m}(p,q)) = \bar{m}(F(p), F(q)) \text{ for all } p,q \in \HH^{[2]}.\]
\end{lemma}

In particular, we will use this lemma to check multiplicativity of vector fields.

\subsection{\underline{The deformation cocycle}} Let us now return to our $\tilde{\G}$. Note that 
\[ \G^{[2]}_{\eps}\subset F= K\times K\times \Rr^n\]
and they fit together into the open subspace $\tilde{\G}^{[2]}$ of $F\times \Rr$.  The idea is
find a multiplicative vector field on $\tilde{\G}$ with second component $\partial_{\eps}$, so that its flow moves the
parameter by translation. The obvious one, $\partial_{\eps}$, is not multiplicative. 
As it will become clear in a moment,
the defect of its multiplicativity is measured by
\begin{equation}\label{eq: xi-lambda}
\G^{[2]}_{\lambda}\ni (p, q)\mapsto \xi_{\lambda}(p,q): =  \frac{\d}{\d \eps}|_{\eps = \lambda}\bar{m}_{\eps}(p,q)\in T_{\bar{m}_{\lambda}(p,q)}\G_{\lambda}.
\end{equation}
Note that, since $\tilde{\G}^{[2]}$ is open, these derivatives are well defined. Now, a vector field $\tilde{X}$ on $\tilde{\G}$ with second component $\partial_{\eps}$
can be written as
\begin{equation}
\label{the-X-lambda} 
\tilde{X}_{p, \lambda}= X_{p}^{\lambda}+ \partial_{\lambda},
\end{equation}
where each $X^{\lambda}$ is a vector field on $\G_{\lambda}$.

\begin{lemma}\label{last-cond-mult} The vector field (\ref{the-X-lambda}) is a multiplicative vector field if and only if,
for each $\lambda$, there is a vector field $V^{\lambda}$ on the base $\Rr^n$ such that 
\begin{equation}\label{eq-X-0} 
(ds)_{g}(X^{\lambda}_{g})= V_{s(g)}^{\lambda}, \ (du)_x(V^{\lambda}_{x})= X^{\lambda}_{u(x)}
\end{equation}
for all $g\in \G_{\lambda}$, $x\in \Rr^n$ and, 
for all $(p, q)\in \G^{[2]}_{\lambda}$,
\begin{equation}\label{eq-X} 
(d\bar{m}_{\lambda})_{p, q}(X_{p}^{\lambda}, X_{q}^{\lambda})= X_{\bar{m}_{\lambda}(p, q)}^{\lambda} - \xi_{\lambda}(p, q).
\end{equation}
\end{lemma}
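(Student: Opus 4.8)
The plan is to read multiplicativity of $\tilde X$ as the statement that $\tilde X\colon \tilde\G\rmap T\tilde\G$ is a morphism of groupoids (covering an induced base field $\tilde V$ on $\Rr^n\times\Rr$), exactly as in the definition of multiplicative vector field, and then to feed this into the criterion recorded just above: since the source, the unit and the modified multiplication $\bar m$ determine the whole groupoid structure, $\tilde X$ is a morphism as soon as it is compatible with $\sigma$ and $\tilde u$ and intertwines the modified multiplications. The point to keep in mind is that on a tangent groupoid the modified multiplication is just the differential of the one downstairs, $\overline{d\mu}=d\bar\mu$; hence the morphism condition reads
\[
\tilde X\big(\bar\mu(P,Q)\big) = (d\bar\mu)\big(\tilde X_P,\tilde X_Q\big),\qquad (P,Q)\in\tilde\G^{[2]}.
\]

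First I would dispatch the source/unit compatibility. Writing $\tilde X_{p,\lambda}=X^\lambda_p+\partial_\lambda$ and taking the base field in the form $\tilde V=V^\lambda+\partial_\lambda$, the equation $d\sigma\circ\tilde X=\tilde V\circ\sigma$ splits into its $\Rr^n$-component, which is exactly $(ds)_g X^\lambda_g=V^\lambda_{s(g)}$, and its $\eps$-component, which is the tautology $\partial_\lambda\mapsto\partial_\lambda$. Similarly $\tilde X\circ\tilde u=d\tilde u\circ\tilde V$ reduces, after discarding the $\partial_\lambda$'s, to $(du)_x V^\lambda_x=X^\lambda_{u(x)}$. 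Together these are precisely the conditions (\ref{eq-X-0}).

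The heart of the argument is to unwind the modified-multiplication condition. A point of $\tilde\G^{[2]}$ is a triple $(p,q,\lambda)$ with $s(p)=s(q)$, and $\bar\mu(p,q,\lambda)=(\bar m_\lambda(p,q),\lambda)$. A composable pair of tangent vectors coming from $\tilde X$ is $(X^\lambda_p+\partial_\lambda,\ X^\lambda_q+\partial_\lambda)$, which assembles into a single velocity $(X^\lambda_p,X^\lambda_q,\partial_\lambda)$ tangent to $\tilde\G^{[2]}$. Its image under $d\bar\mu$ is obtained by differentiating $\eps\mapsto\bar m_\eps(p(\cdot),q(\cdot))$ along a curve with this velocity; by the Leibniz rule the derivative splits into the part where $\eps=\lambda$ is frozen, namely $(d\bar m_\lambda)_{p,q}(X^\lambda_p,X^\lambda_q)$, and the part where $p,q$ are frozen and only $\eps$ moves, which is exactly $\xi_\lambda(p,q)$ by (\ref{eq: xi-lambda}), the $\eps$-component contributing the harmless $\partial_\lambda$. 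On the other hand $\tilde X(\bar\mu(p,q,\lambda))=X^\lambda_{\bar m_\lambda(p,q)}+\partial_\lambda$. Equating the two sides and cancelling $\partial_\lambda$ yields
\[
X^\lambda_{\bar m_\lambda(p,q)} = (d\bar m_\lambda)_{p,q}(X^\lambda_p,X^\lambda_q) + \xi_\lambda(p,q),
\]
which rearranges to (\ref{eq-X}).

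I expect the only delicate point to be this splitting of $d\bar\mu$ into the intrinsic term $d\bar m_\lambda$ and the deformation defect $\xi_\lambda$. One must check that the two copies of $\partial_\lambda$ sitting in $\tilde X_p$ and $\tilde X_q$ genuinely glue into a single parameter-direction velocity tangent to $\tilde\G^{[2]}$ (they do, since along $\tilde\G^{[2]}$ the two $\eps$-coordinates are constrained to agree), so that the cross term is counted once and produces exactly one copy of $\xi_\lambda$. Granting this bookkeeping, the equivalence drops out of the two computations above.
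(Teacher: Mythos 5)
Your proposal is correct and follows essentially the same route as the paper: both reduce multiplicativity via the preceding lemma to compatibility with $s$, $u$ and the modified multiplication, and both obtain (\ref{eq-X}) by splitting the derivative of $\eps\mapsto\bar{m}_{\eps}(u(\eps),v(\eps))$ into the frozen-parameter term $(d\bar{m}_{\lambda})_{p,q}(X^{\lambda}_{p},X^{\lambda}_{q})$ plus the deformation term $\xi_{\lambda}(p,q)$, with matching signs. Your explicit verification of the source/unit conditions (\ref{eq-X-0}) is slightly more detailed than the paper, which leaves that part implicit, but the argument is the same.
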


\begin{proof} We use the last lemma. Fix $p$, $q$ and $\lambda$ and write $\tilde{X}_{p,\lambda}= \dot{u}(\lambda)$, $\tilde{X}_{q,\lambda}= \dot{v}(\lambda)$
($u(\lambda)= p$, $v(\lambda)= q$). The failure to the multiplicativity of $\tilde{X}$ at $p$, $q$ and $\lambda$ is
\begin{align*}
&  (d\bar{\mu})(\tilde{X}_{p, \lambda}, X_{q,\lambda}))- \tilde{X}_{\bar{\mu}((p, \lambda), (q, \lambda))} = \\
= & \frac{\d}{\d \eps}|_{\eps = \lambda} \bar{\mu}((u(\eps), \eps), (v(\eps), \eps))- (X_{\bar{m}_{\lambda}(p, q)}^{\lambda}, \partial_{\lambda})\\
= &   \frac{\d}{\d \eps}|_{\eps = \lambda} (\bar{m}_{\epsilon}(u(\eps), v(\eps)), \eps) -  (X_{\bar{m}_{\lambda}(p, q)}^{\lambda}, \partial_{\lambda})\\
= &  (\frac{\d}{\d \eps}|_{\eps = \lambda} \bar{m}_{\epsilon}(u(\eps), v(\eps))- X_{\bar{m}_{\lambda}(p, q)}^{\lambda}, 0) \\
= & ((\xi_{\lambda}(p, q)+ (d\bar{m}_{\lambda})_{p, q}(X_{p}^{\lambda}, X_{q}^{\lambda})- X_{\bar{m}_{\lambda}(p, q)}^{\lambda}, 0).
\end{align*}
\end{proof}


\begin{lemma} For any $u, v, k\in \G_{\lambda}$ such that $(u, k), (v, k)\in \G^{[2]}_{\lambda}$,
\begin{equation}\label{eq-xi} 
(d\bar{m}_{\lambda})(\xi_{\lambda}(u, k), \xi_{\lambda}(v, k))= \xi_{\lambda}(u, v)- \xi_{\lambda}(\bar{m}_{\lambda}(u, k), \bar{m}_{\lambda}(v, k)).
\end{equation}
\end{lemma}

\begin{proof} Fix $\lambda$, $u, v, k$. Due to the openness again, for $\epsilon$ close to $\lambda$, $(u, k), (v, k)\in \G^{[2]}_{\epsilon}$ and the associativity of
$m_{\eps}$ gives us
\[\bar{m}_{\eps}(\bar{m}_{\eps}(u, k), \bar{m}_{\eps}(v, k))= \bar{m}_{\eps}(u, v).\]
Taking the derivative with respect to $\epsilon$ at $\lambda$, we obtain the desired formula.
\end{proof}

\subsection{\underline{Averaging}} 
Recall that any proper Lie groupoid $\HH$ over a manifold $N$ admits a family 
of linear maps (constructed from a Haar system and a cut-off function \cite{Crainic})
\[ \int_{t^{-1}(x)} : C^{\infty}(t^{-1}(x)) \rmap \mathbb{R} , f\mapsto \int_{t^{-1}(x)} f\stackrel{\textrm{notation}}{=} \int_{x} f(g) dg\]
parametrized smoothly by $x\in N$ (i.e., applying them to a smooth function on $\HH$ produces a smooth function on $N$), normalized
(sends the constant function $1$ to $1$) and left invariant (for $a: x\rmap y$ in $\HH$, the composition with the left translation $L_a: t^{-1}(x)\rmap t^{-1}(y)$ does not change the integral). We apply this to $\tilde{\G}$; hence the resulting integral brakes into integrals $\int_{x}^{\lambda}$ performed on $t_{\lambda}^{-1}(x)$. 
We define
\begin{equation}
\label{X-integral} 
X^{\lambda}_{g}= \int_{s(g)}^{\lambda} \xi_{\lambda}(m_{\lambda}(g, k), k) dk \in T_g\G_{\lambda}.
\end{equation}

\begin{lemma} 
The resulting vector field $\tilde{X} \in \X(\tilde{G})$ is multiplicative.
\end{lemma}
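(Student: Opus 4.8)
The plan is to verify, at each fixed $\lambda$, the two conditions of Lemma \ref{last-cond-mult}, since by that lemma they are exactly what makes $\tilde{X}$ multiplicative. Because the integral $\int_x^{\lambda}$ is performed on the fibers of $t_{\lambda}$ and everything in sight depends on $\lambda$ only as a parameter, I would work with $\lambda$ fixed and suppress it from the notation, writing $X_g$, $\xi(p,q)$, $\bar{m}$, $\int_x$, and $x = s(g)$.

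First I would establish the main identity (\ref{eq-X}). Fix $(p,q) \in \G^{[2]}_{\lambda}$, so $x := s(p) = s(q)$, and both $X_p = \int_x \xi(pk,k)\,dk$ and $X_q = \int_x \xi(qk,k)\,dk$ are integrals over the same fiber $t^{-1}(x)$. The first point is that $d\bar{m}_{p,q}$ may be pulled inside the integral: for each $k$ one has $\xi(pk,k) \in T_p\G_{\lambda}$ and $\xi(qk,k) \in T_q\G_{\lambda}$ (because $\bar{m}(pk,k) = p$ and $\bar{m}(qk,k) = q$), and a short computation shows $ds(\xi(pk,k)) = ds(\xi(qk,k))$, so that each pair $(\xi(pk,k), \xi(qk,k))$ is tangent to $\G^{[2]}_{\lambda}$ at $(p,q)$; linearity of $d\bar{m}_{p,q}$ and its commuting with $\int_x$ then give
\[(d\bar{m}_{\lambda})(X_p, X_q) = \int_x (d\bar{m}_{\lambda})\big(\xi(pk,k), \xi(qk,k)\big)\, dk.\]
Now I would apply the cocycle identity (\ref{eq-xi}) with $u = pk$, $v = qk$ and third argument $k$; using $\bar{m}(pk,k) = p$ and $\bar{m}(qk,k) = q$ this collapses the integrand to $\xi(pk,qk) - \xi(p,q)$. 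By normalization of the integral the constant term contributes exactly $-\xi(p,q)$, while the left-invariance of $\int$, applied to the left translation $L_q : t^{-1}(x) \to t^{-1}(t(q))$ together with $pq^{-1}(qk) = pk$ and $s(pq^{-1}) = t(q)$, identifies $\int_x \xi(pk,qk)\,dk$ with $X_{\bar{m}_{\lambda}(p,q)}$. This yields precisely (\ref{eq-X}).

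It remains to check the base/unit condition (\ref{eq-X-0}). Here I would set $W(k) := \frac{\d}{\d\eps}|_{\eps=\lambda} t_{\eps}(k) \in T_x\Rr^n$ for $k \in t^{-1}(x)$ and define $V^{\lambda}_x := \int_x W(k)\, dk$. Since $ds(\xi(gk,k)) = W(k)$ depends only on $k$ (and not on $g$), integrating gives $ds(X_g) = V^{\lambda}_{s(g)}$, so $X^{\lambda}$ is $s$-projectable onto $V^{\lambda}$. For the unit compatibility I would evaluate at $g = u(x)$: there $u(x)k = k$, so $X_{u(x)} = \int_x \xi(k,k)\,dk$, and since $\bar{m}_{\eps}(k,k) = u(t_{\eps}(k))$ one gets $\xi(k,k) = du_x(W(k))$; pulling $du_x$ out of the integral gives $X_{u(x)} = du_x(V^{\lambda}_x)$, which is the second half of (\ref{eq-X-0}).

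The main obstacle --- and the place where properness and the averaging really enter --- is the interchange of $d\bar{m}_{\lambda}$ with the integral and the left-invariant change of variables. Both steps hinge on the integrand pairs being genuinely tangent to $\G^{[2]}_{\lambda}$ (which is why the $s$-projectability computation must be done first) and on the normalization and left-invariance of the Haar integral; once these are in place, the cocycle identity (\ref{eq-xi}) does the rest essentially formally.
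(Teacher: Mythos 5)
Your proposal is correct and follows essentially the same route as the paper's proof: both verify the two conditions of Lemma \ref{last-cond-mult} with the same base field $V^{\lambda}_x=\int_x^{\lambda}\frac{\d}{\d\eps}|_{\eps=\lambda}t_{\eps}(k)\,dk$, and both establish (\ref{eq-X}) by pulling $d\bar{m}_{\lambda}$ inside the integral, applying the cocycle identity (\ref{eq-xi}) to $(pk,qk,k)$, and using normalization plus left-invariance via the substitution $k'=qk$. The only difference is that you explicitly check that each pair $(\xi_{\lambda}(pk,k),\xi_{\lambda}(qk,k))$ is tangent to $\G^{[2]}_{\lambda}$ at $(p,q)$ before applying $d\bar{m}_{\lambda}$, a point the paper leaves implicit; this is a welcome clarification, not a different argument.
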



\begin{proof} We use Lemma \ref{last-cond-mult}. Fix $\lambda$. As vector field $V^{\lambda}$ on the base we take:
\[ V^{\lambda}_{x} = \int_{x}^{\lambda}v^{\lambda}(k)dk \in T_x\Rr^n, \textrm{where}\    v^{\lambda}(k)= \frac{\d}{\d \eps}|_{\eps = \lambda}t_{\eps}(k) .\]
To check (\ref{eq-X-0}), using the linearity of the integral, it suffices to show:
\[ (ds)(\xi_{\lambda}(p, q))= v^{\lambda}(q), \  \xi_{\lambda}(q, q)= (du)(v^{\lambda}(q)) .\]
But this is immediate from the definition of $\xi_{\lambda}$ and the fact that $s(m_{\eps}(p, q))= t_{\eps}(q)$, $m_{\eps}(q, q)= u(t_{\eps}(q))$
(... and the independence of $s$ and $u$ of $\eps$!).
To check (\ref{eq-X}), since $\lambda$ is fixed, to keep the formulas simpler, let's just write $gh$ for the multiplication $m_{\lambda}(g, h)$ and $g^{-1}$ for the inverse $i_{\lambda}(g)$. 
Computing $(\d \bar{m}_{\lambda})(X_{p}^{\lambda}, X_q^{\lambda})$, we obtain
\begin{align*}
&  (d\bar{m}_{\lambda})(\int_{s(q)}^{\lambda} \xi_{\lambda}(pk, k)dk , \int_{s(q)}^{\lambda}\xi_{\lambda}(qk, k)dk) =\int_{s(q)}^{\lambda} (d\bar{m}_{\lambda})(\xi_{\lambda}(pk, k), \xi_{\lambda}(qk, k)) dk  \\
& \stackrel{(\ref{eq-xi})}{=} \int_{s(q)}^{\lambda} (\xi_{\lambda}(pk, qk)- \xi_{\lambda}(p, q)) dk= \int_{s(q)}^{\lambda} \xi_{\lambda}(pk, qk) dk - \xi_{\lambda}(p, q) \\
& \stackrel{k'= qk}{=} \int_{t(q)}^{\lambda} \xi_{\lambda}(pq^{-1}k', k') dk- \xi_{\lambda}(p, q) = X_{\bar{m}_{\lambda}(p, q)}- \xi_{\lambda}(p, q).
\end{align*}
\end{proof}

\subsection{\underline{End of the proof of  Theorem \ref{th-Zung}}} We use Lemma \ref{mult-flow} for $\tilde{X}$. Let $\phi^{\eps}$ be its flow,
$\psi^{\eps}$ the flow on the base.
 Note that $\phi^{\eps}(g, 0)= (g, \eps)$ for $g\in K\times \{0\}$. Hence we find a neighborhood $B$ of the origin such that
$\phi^{1}(g, 0)$ is defined for all $g\in K\times B$; $\phi^{1}$ defines a diffeomorphism from $K\times B\times \{0\}$ to an open set in $\G_1$ containing
$K\times \{0\}\times \{1\}$. Choose $U_1\subset \mathbb{R}^n$ containing the origin such that $K\times U_{1}\times \{1\}\subset \phi^{1}(K\times B\times \{0\})$. In particular,
$\G_{1}|_{U_1}\subset \phi^{1}(K\times B\times \{0\})$. Let $U_0\subset B$ such that $\psi^{1}$ maps $U_{0}\times \{0\}$ onto $U_1\times \{1\}$. 
Since $\phi^{1}$ covers $\psi^{1}$ with respect to the source and the target, it is clear that $\phi^1$ sends $\G_{0}|_{U_0}\subset K\times B\times \{0\}$ (diffeomorphically) onto $\G_{1}|_{U_1}$. 

\subsection{\underline{Some remarks on the proof}}
\label{remarks-on-the-proof}
The idea of the proof is rather standard and can already be found in the work of Palais on rigidity of Lie group actions \cite{Palais}. 
In the context of groupoids, it was used by Weinstein in the proof of his Theorem 7.1 from \cite{Wein3}. More loosely, our proof can be viewed as a global version of the Moser deformation argument from  \cite{CrFe, Ionut}. Actually, we 
realized that such a proof may be possible precisely when we found out that the cohomological obstruction arising from the Moser deformation argument is, in the integrable case, the shadow of a cocycle on the groupoid (i.e. the proof of Corollary 2.1, in paragraph 5.1 of \cite{Ionut}). 

Note also that our proof hides a cohomology theory that underlies deformations
of proper Lie groupoids, which makes the relationship with Palais' work explicit and which confirms 
some of Weinstein's expectations (Remark 7.3 in \cite{Wein3}). Details in this direction will be given elsewhere \cite{Nuno}.

Finally, note that our proof is very different from Zung's \cite{Zung}. Even at the start, we only assumed that $\G$ is open in $K\times \Rr^n$; the
stronger assumption that the two spaces are the same, made at the beginning of  subsection 2.2 in \cite{Zung}, would have simplified the argument,
but the only way we can support such an assumption is by first proving the theorem.


\section{General remarks; the end of the proof of Theorem \ref{main-theorem}}

This section contains several general remarks on the nature of the linearizability problem 
(Morita invariance) and on properness and its relevance to the existence of slices. These, combined
with our version of Zung's theorem, immediately imply Theorem \ref{main-theorem}.

\subsection{\underline{On Morita invariance}}
\label{subsection-Morita}
In this subsection we explain the relevance of the notion of Morita equivalence to linearization.
Let $\G$ be a Lie groupoid over $M$ and $\HH$ a Lie groupoid over $N$. Recall \cite{MoMr, BW, Crainic} that a \textbf{Morita equivalence} between $\G$ and $\HH$ 
is given by a principal $\HH$-$\G$ bi-bundle, i.e. a manifold $P$ endowed with  
\begin{enumerate}
\item Surjective submersions $\alpha: P\rmap N$, $\beta: P\rmap M$,
\item A left action of $\HH$ on $P$ along the fibers of $\alpha$, which makes $\beta: P\rmap M$ into a principal $\HH$-bundle
in the sense that the action map $\HH\times_{N}P\rmap P\times_{M}P$, $(h, p)\mapsto (hp, p)$, is a diffeomorphism,
\item Similarly, a right action of $\G$ on $P$ along the fibers of $\beta$, which makes $\alpha$ into a principal $\G$-bundle,
\end{enumerate}
and the left and right actions are required to commute. One says that $\G$ and $\HH$ are Morita equivalent if such a bi-bundle exists.

\begin{example}\rm \ Here are some standard examples of Morita equivalences.
Two Lie groups, viewed as Lie groupoids over the one-point space, are Morita equivalent if and only if they are isomorphic. A transitive Lie groupoid
(i.e. with the property that it has only one orbit) is Morita equivalent to the isotropy Lie group of a(ny) point in the base; the bi-bundle is the fiber of $s$ above the point.
The groupoid $\G(\pi)$ associated to a submersion $\pi: M\rmap N$ (Example \ref{ex-submersion}) is Morita equivalent
to $\pi(M) \subset N$, viewed as a groupoid with identity arrows only; the bi-bundle is $P= M$. The action groupoid associated to an action of a Lie group
$G$ on $M$ (Example \ref{ex-action}), for proper free actions, is Morita equivalent to $N= M/G$; the bi-bundle is $P = M$. 
Intuitively, two Lie groupoids are Morita equivalent if they have the same ``transversal geometry''. 
\end{example}

\begin{example}\label{example-saturation}\rm \ If $U$ is an open set in $M$, then the restriction of $\G$ to the saturation of $U$ (an invariant open set) is Morita equivalent to $\G|_{U}$;
a bi-bundle is $t^{-1}(U)$ -- the space of arrows of $\G$ ending at points of $U$, endowed with $\alpha= s$, $\beta= t$ and the natural actions
coming from the multiplication of $\G$. Hence one may say that up to Morita equivalences, when restricting a groupoid $\G$ to an open set $U$, arbitrary open sets are as good as
invariant ones. It follows that if $\G$ linearizable at $\OO$ the its restriction to an invariant neighborhood of $\OO$ is Morita equivalent to the restriction of the local model to an invariant neighborhood of $\OO$. In other words, linearizability of $\G$ at $\OO$ is ``invariant up to Morita equivalence''.
\end{example}

\begin{example}\label{example-M-isotropy}\rm \ For the main theorem, the most important example of Morita equivalence arises in the presence of
transversals (see (2) of Lemma \ref{lemma-transversals}). Here are a few more occurrences. Fix $\G$ over $M$ and let $\OO$ be the orbit through $x$. 
Then, since $\G_{\OO}$ is transitive, it is Morita equivalent to $G_x$. Similarly, the linearization $\NN_{\OO}(\G)$ is Morita equivalent to the action groupoid $G_x\ltimes \NN_x$  ((Example \ref{ex-action}))
associated to the isotropy action; a bi-bundle is $P_x\times \NN_x$, with $\alpha$ the second projection, $\beta$ the quotient map and the actions are the canonical ones. 

Finally, we will also use the following basic property of Morita equivalences: if $P$ realizes a Morita equivalence between $\G$ and $\HH$, 
then $\G$ can be re-constructed from $\HH$ and $P$. Relevant here is 
just the free and proper left action of $\HH$ on $P$, along the submersion $\alpha: P\rmap N$.
The resulting ``gauge groupoid'' is the quotient of the groupoid $\G(\alpha)$ associated to $\alpha$ (Example \ref{ex-submersion}), modulo the diagonal action of $\HH$:
\[
\xymatrix{
\textrm{Gauge}(P):= \G(\alpha)/\HH= (P\times_{\alpha} P)/\HH \ar[r] \ar@<+1ex>[r]&   P/\HH,  }
\]
When $P$ induces an equivalence between $\G$ and $\HH$, then the map $P\times_{\alpha} P\rmap \G$ which associates to a pair $(p, q)$ the unique $g\in \G$ such that $pg= q$ induces an isomorphism from the gauge groupoid to $\G$.  
Of course, this is just a generalization of Example \ref{ex-principal-bundles} from Lie groups to Lie groupoids. 
\end{example}

Any bi-bundle $P$ as above induces a 1-1 correspondence between the orbits of $\G$ and those of $\HH$.
Explicitly, two orbits $\OO\subset M$, $\OO'\subset N$ correspond to each other if and only if 
$\beta^{-1}(\OO)= \alpha^{-1}(\OO')$; in this case we say that \textbf{$\mathcal{O}$ and $\mathcal{O}'$ are $P$-related}. 

\begin{remark}\label{moving-opens}\rm \ One obtains a homeomorphism between the spaces of orbits of $\G$ and $\HH$. 
Let us identify the lattice of open sets in the orbit spaces with the lattice of invariant open sets in the
bases. The remark is that $P$ induces a 1-1 correspondence between $\HH$-invariant open sets $V\subset N$, $\G$-invariant open sets $U\subset M$ and bi-invariant open sets $D\subset P$. Explicitely, $D= \alpha^{-1}(V)= \beta^{-1}(U)$. Note also that $D$ defines a Morita equivalence between
$\G|_{U}$ and $\HH|_{V}$.
\end{remark}

It is well-known that properness is a Morita invariant notion (cf. e.g. \cite{MoMr});
here we show its pointwise version. Say that \textbf{$x$ and $y$ are $P$-related} if $\OO_x$ and $\OO_y$ are.

\begin{proposition}\label{prop: morita invariance of properness} Let $\G$ be a Lie groupoid over $M$, $\HH$ a Lie groupoid over $N$, $P$ a Morita equivalence between them. 
If $x\in M$, $y\in N$ are $P$-related, then $\G$ is proper at $x$ if and only if $\HH$ is proper at $y$.
\end{proposition}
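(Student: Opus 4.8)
The plan is to use the bi-bundle $P$ to transport troublesome sequences of arrows back and forth between $\G$ and $\HH$, exploiting that $\alpha$ and $\beta$ are principal bundles and hence that their associated division maps are continuous. Since the definition of Morita equivalence is symmetric in $\alpha\leftrightarrow\beta$, $\G\leftrightarrow\HH$, $x\leftrightarrow y$, it suffices to prove one implication. So I would assume that $\HH$ is proper at $y$ and deduce that $\G$ is proper at $x$.

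First I would fix a base point $p_0\in P$ lying over both $x$ and $y$, i.e. with $\beta(p_0)=x$ and $\alpha(p_0)=y$. Such a point exists: choosing any $p$ with $\beta(p)=x$ (possible since $\beta$ is a surjective submersion), the $P$-relation $\beta^{-1}(\OO_x)=\alpha^{-1}(\OO_y)$ forces $\alpha(p)\in\OO_y$, and acting by an arrow $h$ of $\HH$ with $s(h)=\alpha(p)$, $t(h)=y$ produces $p_0=h\cdot p$, since the left $\HH$-action has moment map $\alpha$ and preserves $\beta$.

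Next, given a sequence $g_n\in\G$ with $(s(g_n),t(g_n))\to(x,x)$, I would lift it into $P$. Using a local section $\sigma$ of the submersion $\beta$ with $\sigma(x)=p_0$, set $p_n=\sigma(t(g_n))\to p_0$; since $\beta(p_n)=t(g_n)$, the right $\G$-action yields $q_n:=p_n\cdot g_n$ with $\beta(q_n)=s(g_n)\to x$ and $\alpha(q_n)=\alpha(p_n)\to y$. Because $\alpha$ is a principal $\G$-bundle, $g_n$ is the smooth ``division'' $\delta(p_n,q_n)$ of the pair $(p_n,q_n)$; hence it is enough to extract a convergent subsequence of $q_n$, for then $g_{n_k}=\delta(p_{n_k},q_{n_k})\to\delta(p_0,q_\infty)\in\G$ (the limit pair lies in the correct fibered product, as $\alpha(q_\infty)=\alpha(p_0)=y$). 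To extract it I would feed in the $\HH$-properness: setting $p_n':=\sigma(s(g_n))\to p_0$, which lies in the same $\beta$-fiber as $q_n$, the principal $\HH$-bundle structure of $\beta$ gives a unique $h_n\in\HH$ with $h_n\cdot p_n'=q_n$, depending continuously on $(p_n',q_n)$ and satisfying $s(h_n)=\alpha(p_n')\to y$, $t(h_n)=\alpha(q_n)\to y$. Properness of $\HH$ at $y$ furnishes $h_{n_k}\to h$, and continuity of the action gives $q_{n_k}=h_{n_k}\cdot p_{n_k}'\to h\cdot p_0=:q_\infty$, completing the argument.

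The main obstacle is bookkeeping rather than depth: one must keep track of which moment-map conditions guarantee that each action and each division map is defined, and verify at every limit that the relevant pairs stay inside the correct fibered products. Once the base point $p_0$ over both $x$ and $y$ is secured, the essential content is simply that the principal-bundle structure of $\alpha$ and $\beta$ converts a problematic sequence in $\G$ into one in $\HH$ whose source and target converge to $y$, and back again.
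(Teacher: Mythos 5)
Your proof is correct and takes essentially the same route as the paper's: lift $g_n$ into $P$ via local sections of $\beta$, use principality of $\beta$ to produce $h_n\in\HH$ with $s(h_n),t(h_n)\to y$, extract a convergent subsequence by properness of $\HH$ at $y$, and recover convergence of a subsequence of $(g_n)$ through the continuous inverse of the action diffeomorphism for the principal $\G$-bundle $\alpha$ (your division map $\delta$). The only differences are cosmetic: you make explicit the existence of the base point $p_0$ and the symmetry reduction, both of which the paper leaves implicit.
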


\begin{proof} 
Assume that $\HH$ is proper at $y$. Let $(g_n)_{n\geq 1}$ be a sequence in $\G$ with $s(g_n), t(g_n)\to x$. We will show that it admits a convergent subsequence.
Let $p\in P$ with $\beta(p)= x$, $\alpha(p)= y$. 
Since $\beta(p)= x$ and $\beta$ is a submersion (hence it has local sections), we find $p_n, q_n\in P$ converging to $p$ such that $\beta(p_n)= t(g_n)$,
$\beta(q_n)= s(g_n)$. Since $\beta(p_ng_n)= \beta(q_n)$ (they are both $s(g_n)$), we find $h_n\in H$ such that $p_ng_n= h_nq_n$. In turn, this implies that $s(h_n)= \alpha(q_n) \to \alpha(p)= y$ and $t(h_n)= \alpha(h_nq_n)= \alpha(p_ng_n)= \alpha(p_n)\to \alpha(p)= y$. Hence we may assume that $h_n$ converges to
some $h\in \HH$. But then, by the diffeomorphism $P\times_{M}\G\cong P\times_{N}P$, $(p_n, g_n)$ is sent to $(p_n, p_ng_n)= (p_n, h_nq_n)$ which converges to
$(p, hq)$; hence $(p_n, g_n)_{n\geq 1}$ is convergent, hence $(g_n)_{n\geq 1}$ is. 
 \end{proof}

\begin{remark} \rm \ Given $P$, $x$ and $y$ are $P$-related if and only if there exists $p\in P$ such that
$\beta(p)= x, \alpha(p)= y$. Any such $p$ induces an isomorphism between the isotropy groups,
$\phi_p: G_x\rmap H_y$, uniquely determined by $pg= \phi(g)p$ for all $g\in G_x$. Similarly, $p$ induces a
$\phi_p$-equivariant isomorphism between the isotropy representations, $\Phi_p: \mathcal{N}_x\rmap \mathcal{N}_y$, 
uniquely determined by $\Phi_p([(d\beta)_p(X_p)])= [(d\alpha)_p(X_p)]$ for all $X_p\in T_pP$.
Hence the resulting action groupoids are isomorphic; in particular, the resulting linearizations
$\NN_{\OO_x}(\G)$ and $\NN_{\OO_y}(\HH)$ are Morita equivalent.
\end{remark}

Finally, we show that also the notion of linearizability is Morita invariant. This gives a generalization, and a more conceptual interpretation 
to Theorem 9.1 of \cite{Wein3} and of Theorem Theorem 4.2 of \cite{Hessel}
(note however that we do not assume any topological condition on the orbits, nor properness, and not even the compactness of the isotropy groups, and this requires
extra-care in the proof).

\begin{proposition}[Weinstein's Trick]\label{Morita-equivalence} Let $P$ be a Morita equivalence between $\G$ and $\HH$, and assume that $\OO$ and $\OO'$ are $P$-related orbits.
Then $\G$ is linearizable at $\mathcal{O}$ if and only if $\HH$ is linearizable at $\mathcal{O}'$.  
\end{proposition}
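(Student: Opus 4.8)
The plan is to reduce the statement to the reconstruction of a groupoid from a bi-bundle (Example \ref{example-M-isotropy}) together with a linearization of that bi-bundle along the common orbit. Since a principal $\HH$-$\G$ bi-bundle $P$ becomes, after interchanging its two actions, a principal $\G$-$\HH$ bi-bundle, the hypothesis is symmetric in $\G$ and $\HH$; hence it suffices to prove that if $\HH$ is linearizable at $\OO'$ then $\G$ is linearizable at $\OO$. Write $P_{\OO}:=\beta^{-1}(\OO)=\alpha^{-1}(\OO')$, the common preimage of the two $P$-related orbits. Recall from Example \ref{example-M-isotropy} that $\G$ is recovered as the gauge groupoid $\mathrm{Gauge}(P)=(P\times_{\alpha}P)/\HH$ of the free proper left $\HH$-action on $P$ (moment map $\alpha\colon P\rmap N$), with base $P/\HH=M$ via $\beta$. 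Thus a local model for $\G$ near $\OO$ should come from a local model for this principal-bundle datum near $P_{\OO}$, and a linearization of $\HH$ near $\OO'$ is exactly what controls the base $N$ near $\OO'$.

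First I would linearize the bi-bundle. Taking the normal bundle along $P_{\OO}$ produces $L:=\NN_{P_{\OO}}(P)$, equipped with the linearized submersions $\alpha_{L}\colon L\rmap\NN_{\OO'}$ and $\beta_{L}\colon L\rmap\NN_{\OO}$ and the linearized left $\HH$- and right $\G$-actions (the differentials of the original structure maps, exactly as $\NN_{\OO}(\G)$ is built from the tangent groupoid $T\G$). By the same naturality arguments that define $\NN_{\OO}(\G)$, the bi-bundle $L$ realizes the Morita equivalence between $\NN_{\OO}(\G)$ and $\NN_{\OO'}(\HH)$ asserted in the remark preceding the statement, and it is compatible with the gauge construction: one has $\NN_{\OO}(\G)\cong\mathrm{Gauge}(L)=(L\times_{\alpha_{L}}L)/\NN_{\OO'}(\HH)$ in precisely the way that $\G\cong\mathrm{Gauge}(P)$. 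Moreover $L$ contains $P_{\OO}$ as its zero section, and over $P_{\OO}$ the two gauge descriptions restrict to the identical groupoid $\G_{\OO}$.

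The heart of the argument is then to transport the given linearization of $\HH$ through $P$. Starting from an isomorphism $\Psi\colon\HH|_{V}\rmap\NN_{\OO'}(\HH)|_{V'}$ that is the identity on $\HH_{\OO'}$, I would construct, after shrinking neighborhoods, an isomorphism $\Theta$ of principal-bundle data from $P$ (near $P_{\OO}$) onto $L$ (near its zero section) which intertwines the $\HH$-action with the $\NN_{\OO'}(\HH)$-action along $\Psi$, covers the base maps $\alpha\leftrightarrow\alpha_{L}$, and restricts to the identity on $P_{\OO}$; here the leverage is that $P$ and $L$ literally agree along $P_{\OO}$, so that $\Psi$ and the freeness of the actions determine $\Theta$ near $P_{\OO}$. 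Passing to the gauge quotients then yields an isomorphism $\G|_{U}\cong\NN_{\OO}(\G)|_{V''}$, where the neighborhoods are obtained by pushing the chosen $\beta$-saturated neighborhoods in $P$ down through $\beta$ (note that $\beta$-saturated opens in $P$ descend to arbitrary, not necessarily invariant, opens in $M=P/\HH$, which is exactly the flexibility Definition \ref{versions-linearization}(1) allows); the condition $\Theta|_{P_{\OO}}=\mathrm{id}$ guarantees that the resulting isomorphism is the identity on $\G_{\OO}$. I expect the main obstacle to be precisely the construction of $\Theta$ as an \emph{honest} isomorphism of bundles rather than a mere matching of base groupoids, together with the careful bookkeeping of the shrinking neighborhoods (and their descent through the quotients via the correspondence of opens in Remark \ref{moving-opens}), a delicacy that is unavoidable here because, as the authors stress, no properness, no compactness of the isotropy, and no topological condition on the orbits is being assumed.
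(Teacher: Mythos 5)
Your skeleton --- reconstruct $\G$ as the gauge groupoid of $P$, produce a linear model of the bi-bundle along $P_{\OO}$, and push an equivariant identification through the gauge construction --- is the same circle of ideas as the paper's proof. But the step you yourself call the heart of the argument, the existence of $\Theta$, is exactly where all the mathematical content lies, and the justification you offer for it is wrong. You claim that ``$\Psi$ and the freeness of the actions determine $\Theta$ near $P_{\OO}$'' because $P$ and $L$ agree along $P_{\OO}$. They do not: by principality of the bi-bundle, the orbits of the left $\HH$-action on $P$ are precisely the fibers of $\beta$, so $P_{\OO}=\beta^{-1}(\OO)$ is an $\HH$-saturated submanifold, and equivariance together with $\Theta|_{P_{\OO}}=\textrm{Id}$ imposes no condition whatsoever on $\Theta$ off $P_{\OO}$. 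The covering condition $\alpha_{L}\circ\Theta=\psi\circ\alpha$ only pins $\Theta$ down up to motions inside the $\alpha_{L}$-fibers, which are positive-dimensional in general; the remaining freedom is essentially the base map of the linearization of $\G$ one is trying to construct. So building $\Theta$ is not ``bookkeeping'': it amounts to an equivariant tubular neighborhood theorem for the free proper action of the \emph{groupoid} $\HH$ on $P$, compatible with $\alpha$, and for groupoid actions this is not a standard tool and does not follow from freeness.

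The paper resolves precisely this point by a prior Morita reduction that your proposal skips: using the linearizability of $\HH$, Example \ref{example-M-isotropy}, Example \ref{example-saturation} and Remark \ref{moving-opens}, it first replaces $\HH$ by the group action groupoid $H\ltimes W$, $H=G_x$, $W\subset\NN_x$, so that the left action on $P$ becomes an action of an honest Lie group, free and proper because the bundle is principal (no properness of $\HH$ and no compactness of $H$ is needed, which is why the statement survives without those hypotheses). Then the classical $H$-equivariant tubular neighborhood $r\colon\mathcal{U}\rmap P_0$ exists, the map $\phi=(r,\alpha)$ is equivariant, restricts to the identity on $P_0$, and is a local diffeomorphism along $P_0$, and the folklore lemma (proved by passing to the quotients $\mathcal{U}/H$ and using freeness for injectivity) upgrades $\phi$ to an open embedding on an $H$-invariant neighborhood $Q$; applying the gauge construction to $Q\hookrightarrow P_0\times\NN_x$ finishes the proof. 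Your intermediate claims in step two ($L=\NN_{P_{\OO}}(P)$ realizes the Morita equivalence of the two linearizations, and $\textrm{Gauge}(L)\cong\NN_{\OO}(\G)$) are plausible and would be routine verifications, but as written the proposal locates the difficulty without supplying the idea that overcomes it.
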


\begin{proof}
Assume that $\HH$ is linearizable at $\mathcal{O}'$; write $\mathcal{O}= \mathcal{O}_x$, $H= G_x$, $\NN= \NN_x$.  
We claim that, after passing to a (invariant) neighborhood
of $\OO$ in $M$,  we may assume
\begin{equation}\label{reduction} 
\HH= H\ltimes W, \text{ and } y= 0 \  \textrm{with} \ W\subset \mathcal{N}\ \ \textrm{an}\ H-\textrm{invariant\ neighborhood\ of} \ 0 .
\end{equation}
Note that this is a purely Morita equivalence statement; it follows immediately
using transitivity of Morita equivalences, allways passing to invariant open sets (cf. Example \ref{example-saturation}) and 
moving in between the restrictions of our groupoids to invariant open sets in $N$ and $M$ (cf. Remark \ref{moving-opens}). 
Next, we use the gauge construction (Example \ref{example-M-isotropy}) 
to reconstruct $\G$ from $P$ and (\ref{reduction}). To emphasize the relevant structure, 
we consider the category $\mathcal{C}= \mathcal{C}_{H}(\NN)$ of manifolds $Q$ endowed with a 
free and proper left action of $H$ and an equivariant submersion $\alpha: Q\rmap \NN$ with image containing the origin (see also subsection \ref{The new local model}).
Hence $\textrm{Gauge}(Q)= \G(\alpha)/H$ (over $Q/H$) is defined for any object of $\mathcal{C}$, and we may assume that 
$\G= \textrm{Gauge}(P)$ and think about $M$ as $P/H$ and of $\OO$ as $P_0/H$, where $P_0= \alpha^{-1}(0)$. 
It is clear that any open embedding in $\mathcal{C}$, $Q\hookrightarrow P$, induces an open set
$U= Q/H\subset M$ and $\textrm{Gauge}(Q)= \textrm{Gauge}(P)|_{U}= \G|_{U}$. Moreover, we want
$P_0\subset Q$ so that $U$ contains $\OO$. We claim that
it suffices to find such an open set $Q\subset P$ with the property that $(Q, \alpha)$ also embeds openly in the product $(P_0\times \NN, \textrm{pr}_2)$ by some embedding $\phi$. Indeed, $\phi$ would induce an open set $V\subset P_0\times_{H} \NN= \NN_{\OO}(\G)$ 
and an isomorphism between $\textrm{Gauge}(Q)$ and $\textrm{Gauge}(P_0\times \NN)|_{V}$; i.e. an isomorphism between $\G|_{U}$ and
$\NN_{\OO}(\G)|_{V}$. 

Hence we are left with finding an $H$-equivariant open neighborhood $Q$ of $P_0$ in $P$ together with an equivariant open embedding
of type $\phi= (r, \alpha): Q\rmap P_0\times \beta(Q)$. 
To construct $Q$, we start with an $H$-equivariant tubular neighborhood $r: \mathcal{U}\rmap P_0$ of $P_0$ in $P$ (which exists since the action is free and proper). Then the map
\[ \phi= (r, \alpha): \mathcal{U}\rmap P_0\times \beta(\mathcal{U}) \]
is equivariant, is the identity on $P_0$, and is a local diffeomorphism at the points in $P_0$. The existence of $Q$ follows from the 
following folklore result.
\end{proof}

\begin{lemma} If a Lie group $H$ acts freely and properly on manifolds $\mathcal{U}$ and $\mathcal{V}$, $A$ is an invariant submanifold of both $\mathcal{U}$ and $\mathcal{V}$, and
$\phi: \mathcal{U}\rmap \mathcal{V}$ is a smooth map such that $\phi|_{A}= \textrm{Id}_A$ and $\phi$ is a local diffeomorphism at the points of $A$,
then there is an $H$-invariant neighborhood $Q\subset \mathcal{U}$ of $A$ 
such that $\phi: Q\rmap \phi(Q)$ is a diffeomorphism.
\end{lemma}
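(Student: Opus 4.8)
The plan is to prove this ``folklore'' gluing result by combining two ingredients: the non-equivariant inverse function theorem along a submanifold, followed by an averaging/invariance argument to make the resulting neighborhood $H$-invariant. The statement is local near $A$ and the only genuine issue is simultaneously achieving (i) that $\phi$ is a diffeomorphism and (ii) that the domain is $H$-invariant, so I would decouple these two requirements.

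First I would forget the group action entirely and recall the standard fact that if $\phi\colon \mathcal{U}\rmap \mathcal{V}$ restricts to the identity on a submanifold $A$ and is a local diffeomorphism at every point of $A$, then there is an \emph{open} neighborhood $Q_0$ of $A$ in $\mathcal{U}$ on which $\phi$ is a diffeomorphism onto its (open) image. This is the usual ``tubular'' form of the inverse function theorem: near each $a\in A$ one has an open box on which $\phi$ is a diffeomorphism, and since $\phi$ is injective on $A$ itself one can shrink these boxes and take their union to obtain a single open $Q_0 \supset A$ on which $\phi$ is injective (injectivity on a neighborhood of a set along which the map is already injective and infinitesimally invertible is the content one shows by a standard compactness/sequence argument, exactly analogous to the properness argument used in the proof of Proposition \ref{prop-fixed-points}). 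The remaining nontrivial point is only that global injectivity on the union does not follow formally from local injectivity, but it does follow because $\phi|_A = \textrm{Id}_A$ is injective and a failure of injectivity would produce two sequences in distinct boxes with the same image converging to a common point of $A$, contradicting the local diffeomorphism property there.

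The second step is to promote $Q_0$ to an $H$-invariant neighborhood $Q$ of $A$ while retaining the property that $\phi$ is a diffeomorphism. Here I would use that $\phi$ is $H$-equivariant (this is how the Lemma is applied in the proof of Proposition \ref{Morita-equivalence}, where $\phi = (r,\alpha)$ is equivariant); equivariance means that if $\phi$ is injective on $Q_0$ then it is automatically injective on the saturation $H\cdot Q_0$, since $\phi(hq)=h\phi(q)$ and the action on $\mathcal{V}$ is free. However, $H$ may be non-compact, so the saturation $H\cdot Q_0$ need not remain ``small'', and injectivity of an equivariant map on a saturated set does \emph{not} follow from injectivity on $Q_0$ unless one is careful. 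I expect this to be the main obstacle. The clean way around it is to use that the action of $H$ on $\mathcal{U}$ is free and proper, so there is a global slice-type structure: choosing an invariant tubular neighborhood and working with the quotient, one reduces the invariance question to the quotient manifolds $\mathcal{U}/H$ and $\mathcal{V}/H$, where $A/H$ is a submanifold and $\bar\phi\colon \mathcal{U}/H \rmap \mathcal{V}/H$ restricts to the identity on $A/H$ and is a local diffeomorphism there.

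Concretely, I would apply the non-equivariant first step to the \emph{induced map on quotients} $\bar\phi$ rather than to $\phi$ itself. Since the actions are free and proper, $\mathcal{U}/H$, $\mathcal{V}/H$ and $A/H$ are smooth manifolds, $\bar\phi$ is smooth, $\bar\phi|_{A/H}=\textrm{Id}$, and $\bar\phi$ is a local diffeomorphism along $A/H$ (the latter because $\phi$ is equivariant and a local diffeomorphism along $A$, so it descends to one transverse to the orbits). The first step then yields an open neighborhood $\bar Q$ of $A/H$ in $\mathcal{U}/H$ on which $\bar\phi$ is a diffeomorphism onto its image. Pulling back along the quotient map $\mathcal{U}\rmap \mathcal{U}/H$ gives the desired $H$-invariant open set $Q\subset \mathcal{U}$ containing $A$; and because $\phi$ is equivariant and covers the diffeomorphism $\bar\phi|_{\bar Q}$ while being a diffeomorphism on each orbit (the actions being free), a diagram chase shows $\phi\colon Q\rmap \phi(Q)$ is bijective, with smooth inverse obtained by combining the inverse of $\bar\phi$ with the equivariant structure. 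This completes the argument, the whole point being to pass to the orbit spaces so that the non-compactness of $H$ never enters the injectivity question.
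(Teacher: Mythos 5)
Your proposal is correct and follows essentially the same route as the paper's own proof: apply the classical ($H$ trivial) version of the statement to the induced map $\overline{\phi}\colon \mathcal{U}/H \rmap \mathcal{V}/H$ along $A/H$, pull the resulting open set back to an $H$-invariant $Q\subset \mathcal{U}$, and use freeness of the action (together with the equivariance of $\phi$, which is implicit in the lemma and tacitly used by the paper as well) to conclude that $\phi|_{Q}$ is injective, hence a diffeomorphism onto its image. The only cosmetic difference is that the paper first shrinks $\mathcal{U}$ to the open invariant set where $d\phi$ is an isomorphism, whereas you recover the local-diffeomorphism property on $Q$ from the splitting into orbit directions and the transverse directions covered by $\overline{\phi}$.
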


\begin{proof} The set of points at which the differential of $\phi$ is an isomorphism is open and invariant, hence we may assume that $\phi$ is a local diffeomorphism on the entire $\mathcal{U}$. 
The theorem is well-known if $H$ is trivial (cf. e.g. \cite{Lang}). We apply this case to the induced map 
$\overline{\phi}: \mathcal{U}/H\rmap \mathcal{V}/H$ and $A/H$. Let $\overline{Q}$ be the resulting open set in $\mathcal{U}/H$ and
put $Q\subset \mathcal{U}$ its preimage by the quotient map. The freeness of the action implies that $\phi|_{Q}$ is injective, hence also diffeomorphism onto its image.
\end{proof}

\begin{corollary}\label{cor-comp-lin} Assume that $G_x$ is compact. Then $\G$ is linearizable at $\OO$ if and only if its restriction to an some open neighborhood of $\OO$ is Morita equivalent to the action groupoid $G_x\ltimes \NN_x$ associated to the isotropy action, in such a way that $\OO$ is related to $0\in \NN_x$. 
\end{corollary}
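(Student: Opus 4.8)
The plan is to prove Corollary \ref{cor-comp-lin} by combining the Morita invariance of linearizability (Proposition \ref{Morita-equivalence}, ``Weinstein's Trick'') with the already-established fact (recorded in Example \ref{example-M-isotropy}) that the linearization $\NN_{\OO}(\G)$ is itself Morita equivalent to the action groupoid $G_x\ltimes \NN_x$. The compactness of $G_x$ enters only to guarantee that the ``action groupoid'' local model is actually the full linearization of $G_x\ltimes \NN_x$ at the origin, i.e.\ that $G_x\ltimes \NN_x$ is its own linearization at $0$; this lets us replace linearizability of the local model by plain linearizability of $G_x\ltimes \NN_x$ at $0$.

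First I would prove the ``only if'' direction. Assume $\G$ is linearizable at $\OO$. By Example \ref{example-saturation}, passing to an open set is harmless up to Morita equivalence, so the restriction of $\G$ to an open neighborhood $U$ of $\OO$ is Morita equivalent to $\NN_{\OO}(\G)|_V$ for suitable neighborhoods; shrinking and using Example \ref{example-M-isotropy}, the linearization $\NN_{\OO}(\G)$ is Morita equivalent to $G_x\ltimes \NN_x$ via the bi-bundle $P_x\times \NN_x$, with $\OO$ $P$-related to $0\in\NN_x$. Composing these Morita equivalences (Morita equivalence is transitive, and restrictions to corresponding invariant open sets again give Morita equivalences, cf.\ Remark \ref{moving-opens}) yields a Morita equivalence between $\G|_U$ and $G_x\ltimes\NN_x$ matching $\OO$ with $0$, which is exactly the asserted conclusion.

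For the ``if'' direction I would run the argument in reverse, and this is where compactness of $G_x$ does real work. Suppose the restriction $\G|_U$ to some open neighborhood of $\OO$ is Morita equivalent to $G_x\ltimes\NN_x$ with $\OO$ related to $0$. The action groupoid $G_x\ltimes\NN_x$ is trivially linearizable at the fixed point $0$: because $G_x$ is compact the tube theorem (the slice theorem for compact groups) gives invariant linearizability, but in fact one only needs that $G_x\ltimes\NN_x$ coincides with its own linearization $\NN_{0}(G_x\ltimes\NN_x)=G_x\ltimes\NN_x$ near $0$, which holds since the isotropy representation of the compact group $G_x$ on $\NN_0(\NN_x)=\NN_x$ reproduces the original action. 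Hence $G_x\ltimes\NN_x$ is linearizable at $0$, and by Proposition \ref{Morita-equivalence} applied to the given Morita equivalence, $\G|_U$ is linearizable at $\OO$; since linearizability is a local (and, by Example \ref{example-saturation}, Morita-stable) property around $\OO$, this is the same as $\G$ being linearizable at $\OO$.

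The main obstacle I anticipate is bookkeeping rather than conceptual: Proposition \ref{Morita-equivalence} is stated for a single Morita equivalence relating two groupoids, so I must be careful to chain together the equivalence $\G|_U\sim G_x\ltimes\NN_x$ with the equivalence $G_x\ltimes\NN_x\sim\NN_{\OO}(\G)|_V$ and verify that the $P$-relations on orbits compose correctly so that $\OO$ is sent to $0$ and back. A subtler point is that Proposition \ref{Morita-equivalence} transfers linearizability \emph{at $P$-related orbits}, so I need the compactness hypothesis precisely to identify ``linearizable at the fixed point $0$ of $G_x\ltimes\NN_x$'' with the triviality that $G_x\ltimes\NN_x$ is already its own linear model there; without compactness the action groupoid need not be linearizable at $0$, and the equivalence in the corollary would fail. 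Everything else reduces to invoking transitivity of Morita equivalence and the open-set manipulations already justified in Examples \ref{example-saturation} and \ref{example-M-isotropy} and Remark \ref{moving-opens}.
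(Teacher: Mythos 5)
Your ``if'' direction is essentially right, but your account of where compactness enters is backwards, and this conceals a genuine gap in the ``only if'' direction. Linearizability of $\G$ at $\OO$ gives an isomorphism $\G|_U \cong \NN_{\OO}(\G)|_V$ where $V$ is only \emph{some} neighborhood of $\OO$ in $\NN_{\OO}$. Example \ref{example-M-isotropy} provides a Morita equivalence between the \emph{full} linearization $\NN_{\OO}(\G)$ and $G_x\ltimes \NN_x$; it says nothing about the restriction $\NN_{\OO}(\G)|_V$, so the ``composition of Morita equivalences'' you invoke does not actually compose. What you can do is saturate $V$ (Example \ref{example-saturation}); in the bundle picture the resulting invariant open set corresponds to $P_x\times W$ with $W\subset \NN_x$ a $G_x$-invariant neighborhood of $0$, and the chain then yields a Morita equivalence between a restriction of $\G$ and $G_x\ltimes W$ --- not $G_x\ltimes\NN_x$. (Compare the proof of Proposition \ref{new-local-model}, where exactly the groupoid $G_x\ltimes V_x$, $V_x=V\cap\NN_x$, appears at this stage.) The missing step, and the one place where compactness of $G_x$ genuinely does work, is the upgrade from $W$ to $\NN_x$: average to get a $G_x$-invariant inner product on $\NN_x$, choose an invariant ball $B\subset W$, pass via Remark \ref{moving-opens} to the corresponding invariant open set $U''\subset U$ containing $\OO$, and use that $B$ is $G_x$-equivariantly diffeomorphic to $\NN_x$ by radial rescaling, so that $\G|_{U''}$ is Morita equivalent to $G_x\ltimes B\cong G_x\ltimes\NN_x$ with $\OO$ related to $0$. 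This is precisely the fact the paper uses later (``the origin $0\in\NN_x$ admits arbitrarily small $G_x$-invariant neighborhoods which are equivariantly diffeomorphic to $\NN_x$'').

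By contrast, your claim that ``without compactness the action groupoid need not be linearizable at $0$'' is false, and compactness is not needed for the ``if'' direction at all: for \emph{any} Lie group $H$ acting linearly on a vector space $\NN$, the isotropy representation at the fixed point $0$ is the given representation, so $\NN_{\{0\}}(H\ltimes\NN)=H\ltimes\NN$ and the identity map linearizes it --- tautological linearizability at $0$ requires no hypothesis. Hence the ``if'' direction is just Proposition \ref{Morita-equivalence} applied to the given equivalence (together with the observation that linearizability of $\G|_U$ at $\OO$ is the same as that of $\G$, since $\NN_{\OO}(\G|_U)=\NN_{\OO}(\G)$), exactly as you wrote, but valid for every $G_x$. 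The compactness hypothesis belongs entirely to the direction you thought was free.
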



\subsection{\underline{On properness and slices}}
Next, we discuss properness and slices at arbitrary points. Given a Lie groupoid $\G$ over $M$ and $x\in M$, we will call a \textbf{slice at $x$} any embedded submanifold
$\Sigma\subset M$ containing $x$ with the following properties:
\begin{enumerate}
\item $\Sigma$ is transversal to every orbit that it meets.
\item $\Sigma$ is of dimension complementary to the dimension of $\mathcal{O}_x$ and $\Sigma\cap \mathcal{O}_x= \{x\}$. 
\end{enumerate}

The main use of slices comes from the following:

\begin{lemma}\label{lemma-transversals} Let $\G$ be a Lie groupoid over $M$, $\Sigma$ a slice at $x$. Then
\begin{enumerate}
\item[(1)]  $\G|_{\Sigma}$ is a Lie groupoid over $\Sigma$ which has $x$ as fixed point.
\item[(2)] The $\G$-saturation $U$ of $\Sigma$ is open and $\G|_{\Sigma}$ is Morita equivalent to $\G|_{U}$.
\item[(3)] $\G$ is proper at $x$ if and only of $\G|_{\Sigma}$ is proper at $x$.
\end{enumerate}
\end{lemma}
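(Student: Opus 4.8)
The plan is to prove the three parts of Lemma~\ref{lemma-transversals} in order, each building on the previous, using the slice conditions (transversality and complementary dimension) together with the Morita-invariance results already established.

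For part~(1), I would first argue that $\Sigma$ is saturated \emph{within itself}, in the sense that $\G|_\Sigma$ is a genuine Lie groupoid. The issue is smoothness: $\G|_\Sigma = (s,t)^{-1}(\Sigma\times\Sigma)$, so I need the map $(s,t):\G\rmap M\times M$ to be transversal to $\Sigma\times\Sigma$, which would make the preimage a submanifold of $\G$ and hence a Lie groupoid over $\Sigma$. The transversality of $(s,t)$ to $\Sigma\times\Sigma$ should follow from condition~(1) in the definition of a slice: along an arrow $g:a\rmap b$ with $a,b\in\Sigma$, the images of $ds$ and $dt$ together span enough of $TM\oplus TM$ modulo $T\Sigma\oplus T\Sigma$ precisely because $\Sigma$ is transversal to the orbits through $a$ and $b$. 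That $x$ is a fixed point is immediate from condition~(2): $\Sigma\cap\OO_x=\{x\}$ means no arrow in $\G|_\Sigma$ can move $x$ to another point of $\Sigma$, so $s^{-1}(x)\cap t^{-1}(x)$ is all of $s^{-1}(x)$ inside $\G|_\Sigma$.

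For part~(2), I would show the $\G$-saturation $U$ of $\Sigma$ is open and then exhibit the Morita equivalence directly. Openness of $U$ should follow because $\Sigma$ meets every nearby orbit transversally in complementary dimension, so the saturation map $t:s^{-1}(\Sigma)\rmap M$ is a submersion onto $U$ (its differential is surjective by the transversality condition combined with the complementary dimension), and the image of a submersion is open. For the Morita equivalence itself, the natural candidate bi-bundle is
\[
P= s^{-1}(\Sigma)=\{g\in\G: s(g)\in\Sigma\},
\]
with $\alpha= s:P\rmap\Sigma$, $\beta= t:P\rmap U$, and the commuting left $\G|_U$-action and right $\G|_\Sigma$-action both coming from multiplication in $\G$. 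I would then verify the three bi-bundle axioms from subsection~\ref{subsection-Morita}: that $\alpha,\beta$ are surjective submersions (using part of the openness argument above), and that each action is principal (the required diffeomorphisms $\G|_\Sigma\times_\Sigma P\cong P\times_U P$ and its $\G|_U$ analogue come from the unique-factorization property of arrows through $\Sigma$).

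Part~(3) is then essentially free: since $P$ realizes a Morita equivalence between $\G|_\Sigma$ and $\G|_U$, and since $x\in\Sigma$ is $P$-related to itself viewed in $U$ (via the unit arrow $1_x\in P$, which has $\alpha(1_x)=\beta(1_x)=x$), Proposition~\ref{prop: morita invariance of properness} gives that $\G|_U$ is proper at $x$ iff $\G|_\Sigma$ is. The remaining point is to relate properness of $\G$ at $x$ to properness of $\G|_U$ at $x$, which holds because $U$ is an open invariant neighborhood of $x$ and properness at a point is a local condition checked on sequences with source and target converging to $x$ — such sequences eventually lie in $(s,t)^{-1}(U\times U)=\G|_U$. \textbf{The main obstacle} I expect is part~(2): carefully establishing that the candidate actions are \emph{principal} in the precise sense of the bi-bundle definition (the two action maps being diffeomorphisms), since this is where the slice conditions must be converted into the free-and-transitive behaviour of arrows, and where the complementary-dimension hypothesis is genuinely needed rather than just transversality.
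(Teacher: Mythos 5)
Your proposal is correct and is essentially the paper's own argument: the paper packages your two transversality claims into one key computation, namely that $s$ restricted to $t^{-1}(\Sigma)$ is a submersion (a Lie-algebroid dimension count resting on the transversality of $\Sigma$ to the orbits it meets), which simultaneously gives the smoothness of $\G|_{\Sigma}$, the openness of the saturation $U$, and the submersion property needed for the bi-bundle $P=t^{-1}(\Sigma)$ (the mirror, under inversion, of your $s^{-1}(\Sigma)$), after which part (3) follows from Morita invariance of properness exactly as you outline. One minor correction to your closing worry: the complementary-dimension hypothesis is not what makes the two actions principal --- principality is pure groupoid algebra (given $p,q$ with the same source, $h=qp^{-1}$ is the unique arrow with $hp=q$, and it automatically lies in $\G|_{\Sigma}$) --- it is needed only so that $\Sigma\cap\OO_x=\{x\}$ forces $x$ to be a fixed point of $\G|_{\Sigma}$ in part (1).
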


\begin{proof} Since $t$ is a submersion, $t^{-1}(\Sigma)$ is a submanifold of $\G$. We claim that,
\[\phi:= s|_{t^{-1}(\Sigma)}: t^{-1}(\Sigma) \rmap M,\] 
is a submersion. This will imply that $\G|_{\Sigma}= \phi^{-1}(\Sigma)$ is a submanifold of $\G$, hence a Lie groupoid;
also, the image of $\phi$, which is precisely the saturation $U$ of $\Sigma$, will be open in $M$. To show that $\phi$ is a submersion,
we compute the dimension of the image of $(d\phi)_g$, for $g\in t^{-1}(\Sigma)$. Denoting $a= s(g)$, $b= t(g)$, the dimension to 
compute is 
\[ \textrm{dim}(T_gt^{-1}(\Sigma))- \textrm{dim}(\textrm{Ker}(d\phi)_g) .\]
For the first term, we use that the codimension of $t^{-1}(\Sigma)$ in $\G$ equals
the codimension of $\Sigma$ in $M$, i.e. the dimension of $\mathcal{O}_x$. For the second term, we use that
\[ \textrm{Ker}(d\phi)_g= \{X\in T_g(s^{-1}(a)): (dt)_g(X)\in T_b\Sigma \}.\]
Let us denote by $A$ the restriction to $M$, via the unit map, of the vector bundle over $\G$ of vector tangent
to the $s$-fibers, and by $\rho: A\rmap TM$ the bundle map given by the differential of the target ($A$ is just the Lie algebroid of $\G$ and $\rho$ is its anchor). Using right translations,
$T_g(s^{-1}(a))$ is isomorphic to $A_b$ and we obtain
\[ \textrm{Ker}(d\phi)_g\cong \{V\in A_b: \rho(b)\in T_b\Sigma \}.\]
Hence 
\[ \textrm{dim}(A_b)- \textrm{dim}(\textrm{Ker}(d\phi)_g)= \textrm{dim}(T_b\mathcal{O}_b)- \textrm{dim}(T_b\mathcal{O}_b\cap T_b\Sigma)\]
which, since $\Sigma$ is transversal to $\mathcal{O}_b$, equals to $\textrm{dim}(T_bM)- \textrm{dim}(T_b\Sigma)= \textrm{dim}(\mathcal{O}_x)$.
All together, the dimension of the image of $(d\phi)_g$ equals to
\[ (\textrm{dim}(\G)- \textrm{dim}(\mathcal{O}_x))- (\textrm{dim}(A_b)- \textrm{dim}(\mathcal{O}_x))= \textrm{dim}(M),\]
proving that $\phi$ is a submersion. To see that $\G|_{U}$ is Morita equivalent to $\G|_{\Sigma}$, we consider the bi-bundle
$P= t^{-1}(\Sigma)$, with $\alpha$ given by the target and $\beta$ given by the source map (a submersion by the discussion above!),
with the left action of $\G|_{\Sigma}$ and the right one of $\G|_{U}$ given by multiplication of $\G$.
 Finally, (3) follows from Morita invariance of properness. 
\end{proof}

\begin{proposition}\label{exists-slice} Assume that the Lie groupoid $\G$ over $M$ is proper at $x$. Then
\begin{enumerate}
\item[(1)] $\mathcal{O}_x$ is an embedded submanifold of $M$.
\item[(2)] There exists a slice $\Sigma$ at $x$.
\end{enumerate} 
\end{proposition}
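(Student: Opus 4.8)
The plan is to first establish (1) by a direct properness argument comparing the ambient topology of $M$ with the intrinsic (orbit) topology on $\OO_x$, and then to deduce (2) essentially for free once $\OO_x$ is known to be embedded. Throughout I use that $\OO_x$ already carries its canonical immersed smooth structure, for which the isotropy bundle $t: P_x\rmap \OO_x$ of (\ref{orbit-bundle}) is a surjective submersion and the inclusion $\iota: \OO_x\hookrightarrow M$ is an injective immersion. (As a warm-up one also notes that properness at $x$ forces $G_x$ to be compact: any sequence in $G_x= s^{-1}(x)\cap t^{-1}(x)$ has $(s,t)$-image constantly $(x,x)$, so it admits a subsequence converging to some $\gamma$ with $s(\gamma)=t(\gamma)=x$; but this compactness is not actually needed below.)

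For (1) I would show that $\iota^{-1}$ is continuous at $x$ and then propagate this to every point of the orbit. Let $(z_n)$ be a sequence in $\OO_x$ converging to $x$ in the ambient topology of $M$, and choose $g_n\in P_x$ with $t(g_n)= z_n$. Then $(s(g_n), t(g_n))= (x, z_n)\to (x,x)$, so properness at $x$ yields a subsequence $g_{n_k}$ converging in $\G$ to some $g$, which necessarily lies in $G_x$. Since $t: P_x\rmap \OO_x$ is continuous for the intrinsic topology, $z_{n_k}= t(g_{n_k})$ converges to $t(g)= x$ in $\OO_x$. Thus every ambient-convergent sequence to $x$ has a subsequence converging to $x$ intrinsically, which, using the injectivity of $\iota$, is exactly the continuity of $\iota^{-1}$ at $x$. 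Because properness at $x$ implies properness at every point of $\OO_x$ (Proposition \ref{prop: morita invariance of properness}), the identical argument applies at each $y\in \OO_x$, so $\iota^{-1}$ is continuous on all of $\iota(\OO_x)$; hence $\iota$ is a homeomorphism onto its image, i.e. an embedding, and $\OO_x$ is an embedded submanifold of $M$.

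For (2) I would choose a linear complement $W$ of $T_x\OO_x$ in $T_xM$ and an embedded submanifold $\Sigma_0\subset M$ with $x\in \Sigma_0$ and $T_x\Sigma_0= W$, so that $\Sigma_0$ is transverse to $\OO_x$ at $x$ and of complementary dimension. Transversality of $\Sigma_0$ to the orbit through a given point is an open condition, so after shrinking $\Sigma_0$ it is transverse to every orbit it meets; this is property (1) of a slice. For property (2), now that $\OO_x$ is embedded and $\Sigma_0$ is transverse to $\OO_x$ at every point of $\Sigma_0\cap \OO_x$, the intersection $\Sigma_0\cap \OO_x$ is an embedded submanifold of dimension $\dim\Sigma_0+\dim\OO_x-\dim M= 0$, hence discrete; in particular $x$ is isolated in it, and a final shrinking of $\Sigma_0$ gives $\Sigma_0\cap \OO_x= \{x\}$. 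The resulting $\Sigma_0$ is a slice at $x$.

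The main obstacle is the embeddedness step (1): all the content lies in the topological comparison, and this is precisely where properness is indispensable, since an immersed orbit of a non-proper groupoid need not be embedded. The delicate point is to extract, from a sequence that converges only in the ambient topology, genuine convergence in $\G$ via properness at $x$, and then to convert this into intrinsic convergence using that the isotropy bundle is a submersion onto $\OO_x$; the globalization over the orbit then rests on the pointwise Morita invariance of properness. Once (1) is in hand, the slice construction (2) is routine transversality theory.
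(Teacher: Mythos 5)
Your proof is correct, and part (2) coincides with the paper's argument: transversality to the orbits is an open condition along the transversal, and embeddedness of $\OO_x$ (part (1)) makes the zero-dimensional intersection discrete so that a final shrinking works. The genuine difference is in how part (1) treats points of the orbit other than $x$. The paper uses only properness at the single point $x$: given $a_n\to a$ in $M$ with $a, a_n\in \OO_x$, it chooses arrows $g_n: x\rmap a_n$ and $g: a\rmap x$, takes a local section $\sigma$ of $s$ with $\sigma(a)=g$, and passes to $u_n=\sigma(a_n)g_n$, whose source \emph{and} target both converge to $x$; properness at $x$ then gives subsequential convergence of $u_n$, hence of $g_n$, hence intrinsic convergence of $a_n=t(g_n)$. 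You avoid this local-section trick by first propagating properness along the orbit via Proposition \ref{prop: morita invariance of properness} (through the identity Morita equivalence, as in Remark \ref{remark-properness}) and then running the simpler sequence argument at each $y\in\OO_x$, where the arrows $h_n: y\rmap z_n$ already have both endpoints converging to $y$. This is legitimate and non-circular, since that proposition is proved before the slice proposition and independently of it; what it buys is a cleaner pointwise argument, at the cost of invoking a heavier external input whose own proof is (not accidentally) the same kind of local-section argument, whereas the paper's route is self-contained and exhibits directly that properness at the one point $x$ suffices. Two points you gloss over, both fixable in a line: at a general $y$ you use the intrinsic topology presented by $t: P_y\rmap \OO_y$ rather than by $P_x$, so one should note that the orbit's canonical smooth structure is independent of the chosen base point; and your assertion that ``transversality to the orbit through a given point is an open condition'' is precisely what the paper justifies with the Lie algebroid --- the condition reads $T_y\Sigma_0+\rho(A_y)=T_yM$, i.e.\ surjectivity of a vector bundle map over $\Sigma_0$, which is open --- and some such justification is needed because the orbits form a singular foliation whose dimension can jump.
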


\begin{proof} For the first part, we show that any sequence $(a_n)_{n\geq 1}$ of points in $\mathcal{O}_x$ which converges in $M$ to $a\in \mathcal{O}_x$
has a subsequence which is convergent to $a$ in the topology of $\mathcal{O}_x$, i.e. in the topology induced by the submersion $t: s^{-1}(x) \rmap s^{-1}(x)/G_x\cong \mathcal{O}_x$.
Since $a$ and the $a_n$'s are in the orbit of $x$, we find arrows $g_n: x\rmap a_n$ and $g: a\rmap x$ of $\G$. Since $s: \G\rmap M$ is a submersion, we
find a local section $\sigma: M\rmap \G$ such that $\sigma(a)= g$. Since $a_n\to a$, we may assume that $a_n$ is in the domain of $\sigma$. But then $u_n:= \sigma(a_n)g_n$
have $s(u_n)= s(g_n)= x$ and $t(u_n)= t(\sigma(a_n))\to t(\sigma(a))= t(g)= x$. Hence, after passing to a subsequence, we may assume that $u_n$ converges to some $u\in G$.
Since $\sigma(a_n)\to g$, it follows that $(g_n)_{n\geq 1}$ is convergent. That implies that $a_n= t(g_n)$ is convergent in the topology of $\mathcal{O}_x$ (and the limit is clearly $a$).
For (2), one first chooses a transversal $T\subset M$ to $\OO_x$ through $x$ (of complementary dimension). For $y\in T$, the fact that $T$ is transversal to $\mathcal{O}_y$ at $y$,
$T_y(T)+ T_y(\mathcal{O}_y)= T_yM$, can be re-written as $T_y(T)+ \rho(A_y)$, where again we use the Lie algebroid $A$ of $\G$. In particular, the
transversality condition is open in $y$ hence, after passing to a neighborhood $T'$ of $x$ in $T$, it follows that the transversality condition will be satisfied at all points in $T'$. Finally, since
$T'$ is transversal to $\mathcal{O}_x$ at all the intersection points, and are of complementary dimensions in $M$, their intersection will be a $0$-dimensional submanifold, hence discrete.
Hence we can choose a neighborhood $\Sigma$ of $x$ in $T'$ such that $\Sigma\cap \mathcal{O}_x= \{x\}$; $\Sigma$ has all the desired properties. 
\end{proof}


\subsection{End of proof of Theorem \ref{main-theorem}}  
Combine the previous proposition with the Morita equivalence from Lemma \ref{lemma-transversals}, the Morita invariance of linearizability (Proposition \ref{Morita-equivalence}) and the fixed point case (Theorem \ref{th-Zung}).



\section{Variations on the linearizability; the proofs of the corollaries}
\label{Variations}

In this section we discuss some possible variations on the linearization theorem, in which 
invariance of the neighborhoods are required (the invariant and semi-invariant linearization mentioned already
in Definition \ref{versions-linearization}). Of course, this requires variations on the hypothesis of the
theorem as well. In particular, we will prove the corollaries from the
introduction. However, the main conclusion of this subsection is the following: once linearizability is achieved
(as in Theorem \ref{main-theorem}), the passing to stronger versions is just a problem about equivariant bundles.


\subsection{\underline{Variations on the hypothesis}}
\label{var-on-hypothesis}
One of the variations of the main theorem  arises by imposing conditions on the topology of the orbit. 
(think e.g. that Reeb stability is known to hold also for non-compact leaves with ``special topology''). 
Relevant for Corollary \ref{cor1} is the notion of finite type manifolds. As we have already mentioned in the
introduction, the only property of finite type manifolds that we will use is the 
following generalization of Ehresmann's theorem.

\begin{lemma}\label{Weinstein-lemma} (Weinstein semi-stability \cite{Wein3}) If $\alpha: X\rmap Y$ is a submersion, and $y\in Y$ is such that $X_0= \alpha^{-1}(0)$ is a finite type manifold
then, on  a neighborhood $\mathcal{U}$ of $X_0$, $\alpha$ is
equivalent to the product fibration $\textrm{pr}_2: X_0\times \alpha(\mathcal{U})\rmap \alpha(\mathcal{U})$.

In the $K$-equivariant case, where $K$ is a compact Lie group, if we replace the condition that $X_0$ is of finite type
by the condition that the action of $K$ on $X_0$ is free and $X_0/K$ is of finite type, then the equivalence can be chosen $K$-invariant.
\end{lemma}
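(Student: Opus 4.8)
The plan is to prove this as an Ehresmann-type theorem, using a suitably controlled horizontal flow and invoking the finite type hypothesis only to guarantee a uniform time of existence for that flow. Since the statement is local in $Y$ around $y$, I would first assume that $Y$ is a coordinate box $B$ in $\Rr^k$ centred at $y=0$. Choosing an Ehresmann connection for $\alpha$ (a horizontal distribution $H\subset TX$ complementary to $\ker d\alpha$), let $\xi_1,\dots,\xi_k$ be the horizontal lifts of the coordinate vector fields $\partial/\partial y_i$ on $B$. The time-ordered composite of their flows, $\Psi(x,y)=(\phi_{\xi_k}^{y_k}\circ\cdots\circ\phi_{\xi_1}^{y_1})(x)$, is defined on an open neighbourhood $\Omega$ of $X_0\times\{0\}$ in $X_0\times B$, is the identity on $X_0$, covers $\alpha$ (so that $\alpha\circ\Psi=\mathrm{pr}_2$), and is a local diffeomorphism along $X_0$; hence it is a diffeomorphism from $\Omega$ onto a neighbourhood of $X_0$ in $X$. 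The theorem then reduces to showing that $\Omega$ may be taken to contain $X_0\times B'$ for some smaller box $B'$, i.e. that the existence time $\tau(x)$ of these flows is bounded below by a positive constant, uniformly in $x\in X_0$.

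This uniform lower bound is exactly where the finite type hypothesis enters, and I expect it to be the main obstacle: for noncompact $X_0$ the horizontal trajectories may escape to infinity in arbitrarily short time as $x$ varies, so $\Omega$ need not contain any product $X_0\times B'$. Using the definition, I would fix a proper Morse function $f$ on $X_0$ with finitely many critical points, and choose $c$ above all critical values, so that $X_0$ splits into a compact core $K=f^{-1}((-\infty,c])$ and a cylindrical end $f^{-1}([c,\infty))\cong f^{-1}(c)\times[c,\infty)$. Over the compact core the pointwise-positive, lower semicontinuous function $\tau$ is bounded below by compactness, which is the ordinary Ehresmann situation. Over the end I would choose the connection invariant under translation in the $[c,\infty)$ direction, extending an Ehresmann connection over the compact slice $f^{-1}(c)$, so that the horizontal lifts and hence their flows are independent of the cylinder coordinate; then $\tau$ is constant along each ray and uniformly positive on the whole end. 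Interpolating between the two connections over a compact collar of $f^{-1}(c)$, by means of a cut-off in $f$, keeps the construction smooth while leaving both uniform bounds intact, so that $\inf_{x\in X_0}\tau(x)>0$ and the desired $B'$ exists.

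For the $K$-equivariant refinement, where $K$ is compact, acts freely on $X_0$ with $X_0/K$ of finite type, and $\alpha$ is $K$-invariant, I would run the same argument with all choices made $K$-invariantly. One averages the Ehresmann connection over $K$ to make it invariant, and takes $f$ to be the pullback of a proper Morse function with finitely many critical points on the quotient $X_0/K$; such an $f$ is automatically $K$-invariant, and its level sets, compact core and cylindrical end are $K$-invariant as well. The horizontal lifts are then $K$-invariant vector fields, their flows are $K$-equivariant, and therefore so is the resulting $\Psi$. Here compactness of $K$ is what makes the averaging legitimate, while freeness ensures that $X_0/K$ is a manifold, so that the finite type hypothesis on it makes sense.
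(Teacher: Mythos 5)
The paper itself offers no proof of this lemma: it is quoted from Weinstein \cite{Wein3}, and the text states that this is the only property of finite type manifolds it will use. So your argument must stand on its own, and it does not. Your reduction of the statement to a uniform positive lower bound on the existence time $\tau$ of the horizontal flows is correct, and the compact core causes no trouble; the gap is the sentence ``over the end I would choose the connection invariant under translation in the $[c,\infty)$ direction.'' The cylinder structure $f^{-1}(c)\times[c,\infty)$ lives only on the fibre $X_0$, whereas a connection is a distribution on a neighbourhood of $X_0$ in $X$; there is no translation action on such a neighbourhood with respect to which invariance could even be imposed. Producing one --- a product structure, compatible with $\alpha$, on a neighbourhood of the end in $X$ --- is essentially the statement being proved, restricted to the end, so the step is circular. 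The natural substitute, pushing a connection forward from a neighbourhood of the compact slice $f^{-1}(c)$ by the flow of a vertical extension of $\partial_t$, only defines it on the region swept out by that flow, which may pinch down onto $X_0$ as $t\to\infty$, and then no uniform bound on $\tau$ follows.

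The failure is visible in the simplest example. Take $X=\set{(x,y)\in\Rr^2 : xy<1}$, $Y=\Rr$, $\alpha(x,y)=x$, $y=0$, so that $X_0$ is the $y$-axis (finite type, proper Morse function $y^2$). The conclusion of the lemma is obvious here, but your construction fails: for any connection whose horizontal lift of $\partial_x$ has the translation-invariant form $\partial_x+h(x)\,\partial_y$ near the upper end (with $h$ independent of the cylinder coordinate $y$), the trajectory from $(0,y_0)$ satisfies $x(t)=t$ and $y(t)=y_0+\int_0^t h$, hence $x(t)y(t)>1$ --- i.e.\ the trajectory has left $X$ --- before time roughly $2/y_0$; thus $\inf_{X_0}\tau=0$ for every such connection. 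A connection that does achieve a uniform bound must bend inward, with vertical component growing in $y$ (for instance $\partial_x-y^2\partial_y$ works here), so it cannot be independent of the cylinder coordinate, and the required growth depends on how $X$ pinches along the end, which is not known a priori. Supplying a trivialization adapted to this unknown pinching along the noncompact end is precisely the content of Weinstein's semistability theorem, and it is the idea missing from your proof. The equivariant half (averaging over compact $K$, pulling the Morse function back from $X_0/K$) is fine as far as it goes, but it inherits the same gap.
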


The other variations of the main theorem arise by imposing stronger properness conditions. The main such condition is that of $s$-properness, which was already mentioned in 
Definition \ref{def-properness}. Of course, $s$-properness at $x$ implies compactness of the orbit through $x$. However, the main difference
between the proper and the $s$-proper case is the (possible) behaviour of $\G$ around the orbit. 

\begin{definition} An orbit $\OO$ of a Lie groupoid $\G$ is called \textbf{(topologically) stable} (for $\G$) if it admits arbitrarily small invariant neighborhoods.
\end{definition}

\begin{remark} \label{remark-stability}\rm \ Assume that $\G$ is proper at $x$. Then, in general, $\OO_x$ is not stable
(think e.g. of proper actions of non-compact Lie groups). Actually, 
stability of $\OO_x$ implies that $\OO_x$ must be compact, and then $\G$ will be $s$-proper at $x$. 
This follows e.g. from the linearization theorem and then by looking at the local model (see Proposition \ref{explanation-prop-cor} below). 

Still under the assumption that $\G$ is proper at $x$, note that even if $\OO_x$ is compact, it may still fail to be stable
(even for groupoids associated to submersions, see e.g. Example 3.4 in \cite{Wein3}). 
In particular, compactness of $\OO_x$ alone does not imply $s$-properness at $x$, 
unless we make extra assumptions (e.g. that $\OO_x$ is stable or that the $s$-fibers of $\G$ are connected).

The following lemma shows that, in contrast, $s$-properness at $x$ implies not only that $\OO_x$ is compact, but also stable. 
\end{remark}

\begin{lemma}\label{s-prop-stable} If the Lie groupoid $\G$  is $s$-proper at $x$,  then
$\OO = \OO_x$ admits arbitrarily small invariant neighborhoods over which $s$ becomes a locally trivial fibration.
In particular, $\OO$ is stable for $\G$.
\end{lemma}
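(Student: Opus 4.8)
The plan is to split the statement into two logically independent parts and then intersect them: first, that the source map $s$ is a locally trivial fibration over \emph{some} (a priori non-invariant) neighborhood $\Omega$ of $\OO$; second, that $\OO$ admits arbitrarily small invariant neighborhoods. Intersecting a small invariant neighborhood with $\Omega$ then yields the lemma. Before anything, I would record the elementary consequences of $s$-properness at $x$. Any sequence in $P_x := s^{-1}(x)$ has constant $s$-image $x$, hence by Definition~\ref{def-properness} a convergent subsequence; since $P_x$ is closed in $\G$, it is (sequentially, hence) compact, and therefore $\OO = t(P_x)$ and $G_x = P_x\cap t^{-1}(x)$ are compact as well. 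I would then upgrade $s$-properness at $x$ to $s$-properness at every $y\in\OO$: choosing $a\colon x\to y$ and a local section $\sigma$ of $s$ with $\sigma(y)=a^{-1}$, any sequence $g_n$ with $s(g_n)\to y$ can be written $g_n = h_n\cdot\sigma(s(g_n))$ with $h_n = g_n\,\sigma(s(g_n))^{-1}$, so that $s(h_n)=t(\sigma(s(g_n)))\to x$; properness at $x$ gives a convergent subsequence of $h_n$, and since $\sigma(s(g_n))\to a^{-1}$ also of $g_n$. In particular $s^{-1}(y)$ is compact for each $y\in\OO$.

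For the first part, at each $y\in\OO$ the pointwise Ehresmann theorem recorded in Remark~\ref{remark-properness} applies: $s$-properness at $y$ produces a neighborhood $B_y$ of $y$ over which $s$ is trivial. Setting $\Omega=\bigcup_{y\in\OO}B_y$ and using that local triviality is a local condition on the base, I conclude that $s\colon s^{-1}(\Omega)\to\Omega$ is a locally trivial fibration over the open neighborhood $\Omega$ of $\OO$.

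The second part is the geometric core. Fix any neighborhood $N$ of $\OO$; I claim there is a neighborhood $B$ of $x$ with $s^{-1}(B)\subseteq t^{-1}(N\cap\Omega)$. If not, there are $g_n$ with $s(g_n)\to x$ but $t(g_n)\notin N\cap\Omega$; by $s$-properness at $x$ a subsequence converges to some $g\in s^{-1}(x)=P_x$, so $t(g)\in\OO\subseteq N\cap\Omega$, contradicting $t(g_n)\notin N\cap\Omega$ by continuity of $t$ and openness of $N\cap\Omega$. Given such a $B$, the set $U:=t(s^{-1}(B))$ is the saturation of $B$: it is open (since $t$ is an open map and $s^{-1}(B)$ is open), invariant, contains $\OO=t(s^{-1}(x))$, and satisfies $U\subseteq N\cap\Omega$. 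As $N$ was arbitrary, this already shows that $\OO$ is stable. Moreover, since $U$ is invariant we have $\G|_U = s^{-1}(U)$, and as $U\subseteq\Omega$ the restricted source map $s\colon s^{-1}(U)\to U$ is the restriction to the open subset $U$ of the fibration over $\Omega$, hence again a locally trivial fibration.

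The only genuinely external ingredient is the pointwise Ehresmann theorem invoked in the first part; everything else follows directly from the sequential form of $s$-properness and the compactness of $P_x$. I expect the main difficulty to be organizational rather than deep, namely ensuring that the neighborhood $\Omega$ on which $s$ is \emph{already} a locally trivial fibration is secured \emph{before} the shrinking argument, so that local triviality is inherited by the invariant neighborhood $U$; the sequence argument in the third paragraph is what makes this shrinking compatible with invariance.
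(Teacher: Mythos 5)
Your proof is correct, but it is organized differently from the paper's. Both arguments rest on the same three ingredients --- the sequential $s$-properness argument for producing small saturated neighborhoods, the pointwise Ehresmann theorem of Remark \ref{remark-properness}, and right-translation by local sections of $s$ --- but you deploy them in a different order. The paper proves stability first and then, inside a given invariant neighborhood $V$, applies Ehresmann \emph{only at the single point} $x$ to trivialize $s$ over some $U_0\subset V$; it saturates $U_0$ and propagates the trivialization to the whole saturation via the translation $a\mapsto a\,\sigma(s(a))^{-1}$. You instead use the translation trick one level earlier, to propagate \emph{$s$-properness itself} from $x$ to every $y\in\OO$ (a pointwise orbit-invariance statement which the paper records only for $(s,t)$-properness, in Proposition \ref{prop: morita invariance of properness}); this lets you invoke Ehresmann at every point of $\OO$ and obtain a possibly non-invariant neighborhood $\Omega$ of the whole orbit over which $s$ is locally trivial, after which the sequence argument produces the invariant neighborhood inside $N\cap\Omega$. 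The paper's route is more economical (Ehresmann at one point suffices, and no properness needs to be moved along the orbit); yours cleanly decouples local triviality from invariance and yields the orbit-invariance of $s$-properness as a by-product. One caveat: your opening sentence, taken literally (``intersecting a small invariant neighborhood with $\Omega$''), would not work, since the intersection of an invariant set with $\Omega$ need not be invariant; but your third paragraph avoids this by constructing the saturation $U=t(s^{-1}(B))$ directly inside $N\cap\Omega$, which is exactly what closes the argument.
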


\begin{proof} We first prove that $\OO$ is stable. Assume it is not. Then 
it admits a neighborhood $U$ such that the $\G$-saturation of any neighborhood of $x$ is not inside $U$.
Hence we find a sequence of $x_n$ converging to $x$, such that for each $n$ there is an arrow $g_n: x_n\rmap y_n$ with $y_n\notin U$. 
Since $s(g_n)= x_n$ converges to $x$, using the $s$-properness at $x$, we may assume that $(g_n)$ is convergent. But then $y_n= t(g_n)$ will converge to 
a point $y\in \mathcal{O}\subset U$. This is in contradiction with $y_n\notin U$.

Now, let $V$ be an arbitrarily small neighborhood of $\OO$. By the first part, we may assume that it is $\G$-invariant.
Since $s$ is locally trivial around $x$, we find a neighborhood $U_0 \subset V$ of $x$ such that, restricted to $s^{-1}(U_0)$, $s$ is a trivial
fibration. Let $U$ be the saturation of $U_0$. It is invariant and contained in $V$. We claim that $s$ is locally trivial over $U$. Let $y\in U$ be arbitrary.
We find $g: x\rmap y$ for some $x\in U_0$. We choose a section $\sigma: U_1\rmap \G$ of the source map defined on some open neighborhood
$U_1$ of $x$, such that $\sigma(x)= g$ and $\phi_{\sigma}:= t\circ \sigma: U_1\rmap V_1$ is a diffeomorphism onto an open neighborhood
$V_1$ of $y$. We may assume $U_1\subset U_0$ and $V_1\subset U$. It follows that $s$ is trivial over $V_1$ since 
$a\mapsto a\sigma(s(a))^{-1}$ is an isomorphism
from the bundle $s: s^{-1}(U_1)\rmap U_1$ to $s: s^{-1}(V_1)\rmap V_1$, covering $\phi_{\sigma}$. 
\end{proof}


\subsection{\underline{The extended local model}}
\label{The new local model}

In this subsection we explain that, although the linearization of $\G$ at $\OO$ 
gives a priori a (linear) local model for $\G$ only on some neighborhood of $\OO$, 
one can actually extend it to a (non-linear) local model for $\G$ on an {\it invariant} neighborhood of $\OO$. 
We call it the extended local model; it is built out of  a certain $G_x$-equivariant bundle over $\NN_x$
and the linear model corresponds to the trivial bundle.

The extended local model comes out as the maximal information that the Morita equivalence point of view
gives us in the linearizable case. Assume that $H$ is a Lie group acting linearly on a vector space
$\mathcal{N}$. To illustrate the relevant structure, we introduce the category $\mathcal{B}un_{H}(\NN, 0)$ 
which has 
\begin{itemize}
\item as objects: manifolds $P$ endowed with a free and proper action of $H$ and an $H$-equivariant submersion $\alpha: P\rmap \NN$
whose image contains the origin.
\item morphisms: $H$-equivariant maps compatible with the $\alpha$'s.
\end{itemize}
We think of the objects as $H$-equivariant bundles over a neighborhood of the origin in $\NN$ (not necessarily locally trivial!).
For such an object $P$, we consider 
\[ U_{P}:= P/H, \  P_0= \alpha^{-1}(0), \  \OO_{P}:= P_0/H\subset U_{P} .\]
Using Example \ref{example-M-isotropy}, we associate to any object $P\in \mathcal{B}un_{H}(\NN, 0)$
its gauge groupoid 
\[
\xymatrix{
\textrm{Gauge}(P):= \G(\alpha)/H= (P\times_{\alpha} P)/H \ar[r] \ar@<+1ex>[r]&   U_{P}= P/H,  }
\]
the quotient of the groupoid $\G(\al)$ associated to $\alpha$ (Example \ref{ex-submersion}) modulo the
(diagonal) action of $H$. When we need to be more precise, we use the notation $\textrm{Gauge}(P, \alpha)$ or
$\textrm{Gauge}_{H}(P, \alpha)$. For instance, in the case of an orbit $\OO= \OO_x$ of a Lie groupoid $\G$, 
if we apply this construction to the linear object 
\begin{equation} 
\label{linear-object}
(P_x\times \NN_x, \textrm{pr}_2) \in \mathcal{B}un_{G_x}(\NN_x, 0),
\end{equation}
we obtain the local model $\NN_{\OO}(\G)$ in its bundle description. 

\begin{remark}\rm \  Note that for any $P\in \mathcal{B}un_{H}(\NN, 0)$, 
$\OO_P$ is an orbit of $\textrm{Gauge}(P)$ and the resulting isotropy data is
$H$ (as isotropy group), $\NN$ (as the isotropy representation) and $P_0$ (as the isotropy bundle).
Hence the linearization of $\textrm{Gauge}(P)$ at $\OO_P$ is precisely the gauge groupoid of
the linear object 
\[ (P_0\times \NN, \textrm{pr}_2)\in \mathcal{B}un_{H}(\NN, 0) .\]
Also, by construction, $\textrm{Gauge}(P)$ is Morita equivalent to the groupoid associated to the action of $H$ on $\alpha(P)$
In particular, $\textrm{Gauge}(P)$ is linearizable at $\OO_{P}$.
\end{remark}

\begin{remark} \rm \ Since we will be interested in understanding $\textrm{Gauge}(P)$ around neighborhoods of $\OO_P$
in $M_P$, let us point out:
\begin{enumerate}
\item Open neighborhoods of $\OO_P$ in $U_P$ correspond to \textbf{open sub-objects} of $P$, i.e. 
$H$-invariant open sets $Q\subset P$ containing $P_0$. The neighborhood associated to $Q$ is $U_Q\subset U_P$.
Moreover,
\[ \textrm{Gauge}(P)|_{U_Q}= \textrm{Gauge}(Q).\]
\item Open invariant neighborhoods of $\OO_P$ in $U_P$ correspond to \textbf{open sub-bundles} of $P$, 
i.e. $Q\subset P$ of type $\alpha^{-1}(W)$ where $W$ is an $H$-invariant neighborhood of the origin, contained in
$\alpha(P)$. 
\end{enumerate}
\end{remark}

With this, the main result of this subsection is the following.

\begin{proposition} \label{new-local-model} If $\G$ is linearizable at $\OO= \OO_x$, then there exists an invariant neighborhood 
$U$ of $\OO$, and $P\in \mathcal{B}un_{G_x}(\NN_x)$ such that
\[ \G|_{U}\cong \textrm{Gauge}(P) \]
by a diffeomorphism which sends $\OO$ to $\OO_P$. Moreover, if $\OO$ is a stable orbit, then $P$ may be chosen to be an open sub-object of (\ref{linear-object}). 
\end{proposition}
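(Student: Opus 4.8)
The plan is to produce $P$ directly as a Morita bi-bundle and then invoke the gauge reconstruction of Example \ref{example-M-isotropy}. Write $H = G_x$, $\NN = \NN_x$. By linearizability (Definition \ref{versions-linearization}(1)) we are handed an isomorphism $\G|_{U_0} \cong \NN_{\OO}(\G)|_{V_0}$ over neighborhoods $U_0 \subset M$, $V_0 \subset \NN_{\OO}$ of $\OO$. Since $\NN_{\OO}(\G)$ is Morita equivalent to the action groupoid $H \ltimes \NN$ via the bi-bundle $P_x \times \NN$ with $\alpha = \mathrm{pr}_2$ and $\beta$ the quotient map (Example \ref{example-M-isotropy}), with $\OO$ related to $0$, I would first restrict this bi-bundle over $\beta^{-1}(V_0)$. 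Because $\beta$ is the $H$-quotient and $\alpha$ is $H$-equivariant, the set $\beta^{-1}(V_0)$ is $H$-invariant and $W_0 := \alpha(\beta^{-1}(V_0)) \subset \NN$ is an $H$-invariant neighborhood of $0$; this exhibits a Morita equivalence between $\NN_{\OO}(\G)|_{V_0}$ and $H \ltimes W_0$. Composing with the linearization isomorphism yields a Morita equivalence between $\G|_{U_0}$ and $H \ltimes W_0$, still with $\OO$ related to $0$.

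Next I would pass to an invariant neighborhood. Let $U$ be the $\G$-saturation of $U_0$, which is open and invariant; by Example \ref{example-saturation}, $\G|_U$ is Morita equivalent to $\G|_{U_0}$, hence, by transitivity, to $H \ltimes W_0$. Let $P$ be a bi-bundle realizing this last equivalence. The crucial observation is that a left action of the action groupoid $H \ltimes W_0$ along $\alpha \colon P \rmap W_0$ is exactly the same data as a free and proper $H$-action on $P$ together with an $H$-equivariant submersion $\alpha \colon P \rmap W_0 \hookrightarrow \NN$ whose image contains $0$; that is, $P$ is an object of $\mathcal{B}un_{H}(\NN,0)$. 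Since the quotient of $P$ by this action is exactly $U$, the gauge reconstruction of Example \ref{example-M-isotropy} gives a canonical isomorphism $\G|_U \cong \textrm{Gauge}(P)$ carrying $\OO$ to $\OO_P = \alpha^{-1}(0)/H$. This settles the first assertion.

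For the \emph{moreover} part I would not use the saturation trick but rather the hypothesis that $\OO$ is stable. Writing $V_0 = U_{Q_0}$ for the open sub-object $Q_0 \subset P_x \times \NN$ (the preimage of $V_0$ under the quotient map), linearizability already reads $\G|_{U_0} \cong \textrm{Gauge}(Q_0)$. Now stability supplies a $\G$-invariant neighborhood $U$ of $\OO$ with $U \subset U_0$; being $\G$-invariant and contained in $U_0$, it is invariant for $\G|_{U_0}$, so the linearization isomorphism restricts to $\G|_U \cong \NN_{\OO}(\G)|_V$, where $V$ is the image of $U$ (a neighborhood of $\OO$, since the isomorphism is the identity on $\G_{\OO}$). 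Setting $Q$ to be the preimage of $V$ in $P_x \times \NN$ gives an $H$-invariant open set containing $P_x \times \{0\}$, i.e. an open sub-object of (\ref{linear-object}), with $\NN_{\OO}(\G)|_V = \textrm{Gauge}(Q)$; taking $P = Q$ finishes the argument. Note that $Q$ need only be a sub-object, not a sub-bundle, so one does not have to worry about $V$ being invariant in the full $\NN_{\OO}(\G)$.

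The main obstacle, and the point demanding the most care, is the passage to an invariant neighborhood in the first part \emph{without} any properness or compactness hypotheses: one must check that restricting the bi-bundle $P_x \times \NN$ over the non-invariant open $V_0$ is legitimate and produces an action groupoid on the other side (this works precisely because $\alpha$ is $H$-equivariant and $\beta$ is the $H$-quotient, which forces $W_0$ to come out $H$-invariant), and that the bi-bundle $P$ obtained after saturating genuinely lies in $\mathcal{B}un_{H}(\NN,0)$ with quotient exactly $U$. Everything else is bookkeeping with the orbit correspondence and the identification of $\OO$ with $0 \in \NN$.
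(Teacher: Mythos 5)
Your proof is correct, and it splits naturally into two comparisons. For the first assertion you follow essentially the paper's route: pass to saturations, exhibit a Morita equivalence between $\G$ restricted to an invariant neighborhood and an action groupoid $G_x\ltimes W_0$, observe that the realizing bi-bundle is precisely an object of $\mathcal{B}un_{G_x}(\NN_x,0)$ with quotient $U$, and conclude by the gauge reconstruction of Example \ref{example-M-isotropy}. The only real difference is the order of operations: the paper saturates both $U$ and $V$ first (so that the invariant $V$ is of product type and $\NN_{\OO}(\G)|_{V}$ is equivalent to $G_x\ltimes V_x$ on the nose), whereas you restrict the standard bi-bundle $P_x\times\NN_x$ over the non-invariant $V_0$ and check that $W_0=\alpha(\beta^{-1}(V_0))$ comes out $G_x$-invariant; both are legitimate, and the remaining principality checks you defer are routine (the unique arrows realizing the principal actions automatically have source and target in the restricted bases). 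For the \emph{moreover} part, however, your argument is genuinely different from, and simpler than, the paper's. The paper reuses the bi-bundle $P$ over the saturation together with the tubular-neighborhood embedding from the proof of Proposition \ref{Morita-equivalence}, and then invokes the dictionary ``$\OO$ stable $\Leftrightarrow$ every open sub-object of $P$ contains an open sub-bundle'' to make the induced neighborhood invariant. You instead apply stability directly on $M$ to shrink the linearizing neighborhood $U_0$ to a $\G$-invariant $U$, restrict the linearization isomorphism to $\G|_{U}\cong \NN_{\OO}(\G)|_{\varphi(U)}$, and take $P$ to be the preimage of $\varphi(U)$ in $P_x\times\NN_x$; your closing observation --- that the statement only demands a sub-\emph{object}, not a sub-\emph{bundle}, because invariance is required of $U$ and not of $V$ --- is exactly what makes this shortcut legitimate. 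Your route buys economy (no tubular neighborhood, no second use of the folklore lemma); the paper's route buys the sub-object/sub-bundle characterization of stability, which it then exploits again in Propositions \ref{explanation-prop-cor}, \ref{prop-cor1} and \ref{prop-cor2}.
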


\begin{proof} Consider $U$ and $V$ such that $\G|_{U}$ is isomorphic to $\NN_{\OO}(\G)|_{V}$. Passing to their
saturation (see Example \ref{example-saturation}), we obtain invariant neighborhoods $U$ and $V$ such that $\G|_{U}$ is Morita equivalent to $\NN_{\OO}(\G)|_{V}$.
In turn, the last groupoid is Morita equivalent to the action groupoid $G_x\ltimes V_x$  where $V_x= V\cap \NN_x$ (see Remark \ref{example-M-isotropy}).
Then a bi-bundle $P$ which realizes a Morita equivalence between $G_x\ltimes V_x$ and $\G|_{U}$ is precisely an object $P\in \mathcal{B}un_{G_x}(\NN_x)$
such that $\G|_{U}\cong \textrm{Gauge}(P)$ (see Example \ref{example-M-isotropy}). Of course, this bundle $P$ is the same as the one that appears in
the proof of Proposition \ref{Morita-equivalence}. The argument given there shows that $P$ admits an open sub-object $P'$ which can be openly embedded as a sub-object 
of $P_x\times \NN_x$. On the other hand, the stability of $\OO$ for $\textrm{Gauge}(P)$ (hence also for $\G$) is equivalent to the fact that any 
open sub-object of $P$ contains a smaller open sub-bundle. Hence $P'$ can be chosen so that it is a sub-bundle of $P$. That means that the
open set $U'$ that it induces on $M$ is invariant and $\G|_{U'}= \textrm{Gauge}(P')$.
\end{proof}

\subsection{\underline{Semi-invariant and invariant linearizability}}
Here we deduce the following improvements of Corollary \ref{cor1} and \ref{cor2} from the introduction.

\begin{proposition}\label{prop-cor1} If $\G$ is linearizable at $\OO= \OO_x$, $G_x$ is compact and $\OO$ is of finite type, then
there are arbitrarily small neighborhoods $U$ of $\OO$ such that $\G|_{U}\cong \NN_{\OO}(\G)$.
\end{proposition}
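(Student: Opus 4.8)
The plan is to combine the extended-local-model description of Proposition \ref{new-local-model} with the equivariant Weinstein semi-stability of Lemma \ref{Weinstein-lemma}, using the compactness of $G_x$ to rescale the transverse directions and so pass from a sub-bundle back to the full normal representation.

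First I would invoke Proposition \ref{new-local-model}: since $\G$ is linearizable at $\OO$, there is an invariant neighborhood over which $\G\cong \textrm{Gauge}(P)$ for some object $P\in\mathcal{B}un_{G_x}(\NN_x)$, with structure submersion $\alpha\colon P\rmap\NN_x$ and with $\OO$ identified with $\OO_P=P_0/G_x$, where $P_0=\alpha^{-1}(0)$ is (identified with) the isotropy bundle $P_x$. Now $G_x$ acts freely and properly on $P$, hence on the invariant submanifold $P_0$, and the hypothesis that $\OO$ is of finite type says precisely that $P_0/G_x$ is of finite type. Since moreover $G_x$ is compact, this is exactly the setting of the equivariant part of Lemma \ref{Weinstein-lemma}.

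Next I would apply Lemma \ref{Weinstein-lemma} to the equivariant submersion $\alpha\colon P\rmap\NN_x$ at the (fixed) point $0\in\NN_x$, with $K=G_x$ and $X_0=P_0$. This produces a $G_x$-invariant neighborhood $\mathcal{U}$ of $P_0$ in $P$ together with a $G_x$-equivariant diffeomorphism $\psi\colon P_0\times W\rmap\mathcal{U}$ over $W:=\alpha(\mathcal{U})$ (so that $\alpha\circ\psi=\textrm{pr}_2$), which may be taken to be the identity on $P_0$; here $W$ is a $G_x$-invariant open neighborhood of the origin. Thus $\psi$ is an isomorphism $(P_0\times W,\textrm{pr}_2)\cong(\mathcal{U},\alpha)$ in $\mathcal{B}un_{G_x}(\NN_x)$, so by functoriality of the gauge construction it induces an isomorphism of groupoids $\G|_{U_{\mathcal{U}}}\cong\textrm{Gauge}(P_0\times W)$, where $U_{\mathcal{U}}=\mathcal{U}/G_x$. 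Finally I would use compactness of $G_x$ to replace $W$ by all of $\NN_x$: averaging yields a $G_x$-invariant inner product on $\NN_x$, I then choose an invariant open ball $B\subset W$ and a $G_x$-equivariant radial rescaling diffeomorphism $\rho\colon\NN_x\rmap B$ (with $\rho(0)=0$). In the bundle description $\NN_{\OO}(\G)=\textrm{Gauge}(P_0\times\NN_x)=(P_0\times P_0)\times_{G_x}\NN_x$, the assignment $[p,q,v]\mapsto[p,q,\rho(v)]$ is well defined by equivariance of $\rho$ and visibly preserves $s$, $t$ and the multiplication $[p,q,v][q,r,v]=[p,r,v]$; hence it is an isomorphism of groupoids onto $\textrm{Gauge}(P_0\times B)=\textrm{Gauge}(P_0\times W)|_{U_B}$, with $U_B=(P_0\times B)/G_x$. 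Transporting back through $\psi$ and Proposition \ref{new-local-model}, each $U_B$ corresponds to an invariant neighborhood of $\OO$ in $M$ with $\G|_{U_B}\cong\NN_{\OO}(\G)$; since $\rho(0)=0$ and $\psi$ is the identity on $P_0$, this isomorphism is the identity on $\G_{\OO}$. As $B$ runs over arbitrarily small invariant balls (cofinal among invariant neighborhoods of $0$ in $\NN_x$ because $G_x$ is compact), the $U_B$ furnish arbitrarily small neighborhoods of $\OO$, as required.

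The step I expect to be the crux is the application of equivariant semi-stability, since this is exactly where the finite-type hypothesis on $\OO$ is consumed (it is, as the paper stresses, the only property of finite-type manifolds that is used). The one point demanding care is that $P_0$, equivalently $\OO$, need not be compact, so one cannot instead appeal to stability to realize $P$ itself as an open sub-object of the linear model (\ref{linear-object}); the product structure furnished by Lemma \ref{Weinstein-lemma} is precisely what substitutes for this, and the compactness of $G_x$ then does the remaining work of rescaling the sub-bundle $P_0\times B$ onto the full normal representation $P_0\times\NN_x$.
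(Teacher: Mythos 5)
Your proposal assembles the right ingredients (Proposition \ref{new-local-model}, the equivariant case of Lemma \ref{Weinstein-lemma}, and the rescaling of an invariant ball onto $\NN_x$ via an averaged metric), but the final step contains a genuine gap. You apply Lemma \ref{Weinstein-lemma} \emph{once}, to $\alpha\colon P\rmap \NN_x$, obtaining a single product neighborhood $P_0\times W$, and then claim that the tubes $U_B=(P_0\times B)/G_x$ are arbitrarily small as $B$ shrinks. This is false unless $\OO$ is compact, and the finite-type hypothesis does not force compactness (e.g.\ $\Rr^n$ is of finite type); indeed the whole point of this proposition, as opposed to Proposition \ref{prop-cor2}, is to cover non-compact orbits. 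Concretely, if $\OO$ is non-compact, choose $\epsilon\colon \OO\rmap (0,\infty)$ with infimum $0$ and consider the neighborhood of $\OO$ given by $\{[p,v]: ||v||<\epsilon(\pi(p))\}$; it contains no set of the form $U_B$, since $P_0\times B$ inside it would force $||w||<\epsilon(y)$ for every $w\in B$ and every $y\in \OO$, i.e.\ $B=\{0\}$. (This is exactly the family $P_\epsilon$ the paper uses in the proof of Proposition \ref{explanation-prop-cor} to show that stability forces compactness of the orbit.) In short: cofinality of the balls $B$ among invariant neighborhoods of $0\in\NN_x$ does not transfer to cofinality of the products $P_0\times B$ among neighborhoods of $P_0\times\{0\}$ when $P_0$ is non-compact, so your argument proves existence of \emph{some} neighborhoods $U$ with $\G|_U\cong\NN_\OO(\G)$, not of arbitrarily small ones.

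The repair is to consume the finite-type hypothesis inside each prescribed neighborhood rather than once and for all, which is precisely what the paper does. Given an arbitrary neighborhood of $\OO$, it corresponds to an open sub-object $P'\subset P$ (every neighborhood of $\OO_P$ in $U_P$ arises this way), and one applies the equivariant Lemma \ref{Weinstein-lemma} to the restricted submersion $\alpha\colon P'\rmap\NN_x$; note that $\alpha^{-1}(0)$ is still $P_0$, so the hypotheses ($G_x$ compact, acting freely, $P_0/G_x=\OO$ of finite type) are unchanged. This yields a product sub-object $P_0\times W'\subset P'$, and your rescaling step (invariant ball $B\subset W'$, equivariant radial diffeomorphism $\rho\colon\NN_x\rmap B$, functoriality of the gauge construction) then applies verbatim \emph{inside} $P'$, producing $U\subset U_{P'}$ with $\G|_U\cong\NN_\OO(\G)$. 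With this reordering your argument becomes essentially identical to the paper's proof; all your other steps are correct.
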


\begin{proposition}\label{prop-cor2} If $\G$ is linearizable at $\OO= \OO_x$, $G_x$ is compact and $\OO$ is a stable 
orbit of $\G$, then there are arbitrarily small neighborhoods $U$ of $\OO$ such that $\G|_{U}\cong \NN_{\OO}(\G)$.
\end{proposition}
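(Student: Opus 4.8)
The plan is to reduce the statement to two structural inputs we already have: the bundle description of the linear model, $\NN_{\OO}(\G)= \textrm{Gauge}(P_x\times \NN_x)$ together with Proposition \ref{new-local-model}, and the compactness of $G_x$. The key point is that, under these hypotheses, the orbit is forced to be compact, after which a single equivariant rescaling identifies a small tube in the model with the whole model.

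First I would use linearizability to test stability inside the linear model $\NN_{\OO}(\G)$ over $\NN_{\OO}= P_x\times_{G_x}\NN_x$. The $\NN_{\OO}(\G)$-invariant open neighbourhoods of $\OO$ are exactly the tubes $P_x\times_{G_x}\mathcal{W}$, with $\mathcal{W}$ a $G_x$-invariant neighbourhood of $0$ in $\NN_x$ (these are the open sub-bundles $\alpha^{-1}(\mathcal{W})$ of the linear object, which for a product are genuine tubes). Such tubes can form a neighbourhood basis of the zero section only when the base $\OO$ is compact. Hence stability of $\OO$ forces $\OO$ to be compact, and since $G_x$ is compact and $P_x\to \OO$ is a $G_x$-principal bundle, $P_x$ is compact as well. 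This is essentially the content of Remark \ref{remark-stability}, which I would reprove in this form, since here we only assume linearizability and not properness.

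Next I would exploit compactness of $G_x$ to build the isomorphism on the model side. Choosing a $G_x$-invariant inner product on $\NN_x$ (by averaging), for each $\epsilon>0$ the radial map $\psi_\epsilon\colon \NN_x\to B_\epsilon$ onto the open $\epsilon$-ball is a $G_x$-equivariant diffeomorphism, so $\textrm{id}_{P_x}\times \psi_\epsilon\colon P_x\times \NN_x\to P_x\times B_\epsilon$ is $G_x$-equivariant and covers the base diffeomorphism $\psi_\epsilon$ of $\NN_x$. Because it carries $\textrm{pr}_2$-fibers to $\textrm{pr}_2$-fibers, it descends to an isomorphism of gauge groupoids $\NN_{\OO}(\G)=\textrm{Gauge}(P_x\times \NN_x)\cong \textrm{Gauge}(P_x\times B_\epsilon)=\NN_{\OO}(\G)|_{\NN_{\OO}^{<\epsilon}}$, where $\NN_{\OO}^{<\epsilon}=P_x\times_{G_x}B_\epsilon$ is the open $\epsilon$-disk bundle. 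Thus each such tube neighbourhood of $\OO$ already carries a copy of the full linear model; this is precisely the mechanism by which compactness of $G_x$ lets one replace $V$ by all of $\NN_{\OO}$.

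Finally I would combine the two. Starting from a linearization $\G|_{U_1}\cong \NN_{\OO}(\G)|_{V_1}$, compactness of $\OO$ guarantees that $\NN_{\OO}^{<\epsilon}\subseteq V_1$ for all small $\epsilon$ and that these disk bundles shrink down to $\OO$; transporting $\NN_{\OO}^{<\epsilon}$ back through the isomorphism yields arbitrarily small neighbourhoods $U'\subseteq U_1$ with $\G|_{U'}\cong \NN_{\OO}(\G)|_{\NN_{\OO}^{<\epsilon}}\cong \NN_{\OO}(\G)$, as required. Alternatively, once $\OO$ is known to be compact it is of finite type, and the statement follows immediately from Proposition \ref{prop-cor1}. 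The step I expect to be the main obstacle is the first one, establishing that stability plus linearizability forces $\OO$ to be compact, together with the verification in the third paragraph that the rescaling is not merely a base diffeomorphism but genuinely descends to a groupoid isomorphism; the latter is exactly why one must rescale by a fiberwise-uniform radius, so as to preserve the $\textrm{pr}_2$-fibers defining $\textrm{Gauge}$, rather than by a base-dependent one.
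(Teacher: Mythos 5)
Your core mechanism is the right one, and it is exactly the engine of the paper's own proof: choose a $G_x$-invariant inner product on $\NN_x$ (averaging, using compactness of $G_x$), note that radial rescaling gives a $G_x$-equivariant diffeomorphism $\NN_x\cong B_\epsilon$, and conclude that $\textrm{Gauge}(P_x\times B_\epsilon)\cong\textrm{Gauge}(P_x\times\NN_x)=\NN_{\OO}(\G)$ because the map $\textrm{id}_{P_x}\times\psi_\epsilon$ carries $\textrm{pr}_2$-fibers to $\textrm{pr}_2$-fibers (in the paper this is the statement that $0\in\NN_x$ admits arbitrarily small $G_x$-invariant neighborhoods equivariantly diffeomorphic to $\NN_x$). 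However, your first step, which you yourself flag as the main obstacle, has a genuine gap. You claim that the $\NN_{\OO}(\G)$-invariant open neighborhoods of $\OO$ are exactly the tubes $P_x\times_{G_x}\mathcal{W}$, and that stability therefore forces $\OO$ to be compact. That characterization of invariant opens concerns the \emph{full} linear model, but your hypothesis is stability of $\OO$ with respect to $\G$, and the linearization you are given only identifies $\G|_{U_1}$ with $\NN_{\OO}(\G)|_{V_1}$ where $V_1$ is \emph{not} invariant. Transporting a small $\G$-invariant neighborhood through this isomorphism produces a set that is invariant only for the \emph{restricted} groupoid $\NN_{\OO}(\G)|_{V_1}$, i.e.\ a union of partial orbits $(\textrm{orbit})\cap V_1$; such sets need not be tubes ($V_1$ itself is one), so the inference ``stability $\Rightarrow$ tubes form a basis $\Rightarrow$ $\OO$ compact'' does not close as written. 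It can be repaired: choose the shrinking function $\epsilon:\OO\to(0,\infty)$ with $\inf\epsilon=0$ \emph{and} with $\epsilon$ below a radius function of $V_1$, so that $U_\epsilon\subset V_1$; then invariance under arrows with both endpoints in $V_1$ already yields the contradiction. Note also the degenerate case $\NN_x=0$, where $\OO$ is open, stable and possibly non-compact, so ``stability forces compactness'' is literally false there (though the conclusion of the proposition is then trivial).

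It is worth contrasting this with the paper's route, which never needs compactness of $\OO$ at all: stability is fed into Proposition \ref{new-local-model}, which produces an \emph{invariant} neighborhood $U'$ and an identification $\G|_{U'}\cong\textrm{Gauge}(P_x\times W)$ with $W$ a $G_x$-invariant neighborhood of $0$ in $\NN_x$ — this Morita-theoretic step is precisely where the transfer-of-invariance problem you run into is resolved — and then the equivariant rescaling is applied inside $W$. So the paper's structure is: stability $\Rightarrow$ invariant product model $\Rightarrow$ rescale, whereas yours is: stability $\Rightarrow$ compactness of $\OO$ $\Rightarrow$ metric disk bundles inside $V_1$ $\Rightarrow$ rescale. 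Your fallback argument (compact $\Rightarrow$ finite type $\Rightarrow$ Proposition \ref{prop-cor1}) is valid once compactness is secured, but it imports Weinstein's semi-stability Lemma \ref{Weinstein-lemma}, a substantially heavier input that the paper's proof of this corollary deliberately avoids.
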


To clarify the hypothesis, let us also point out:

\begin{proposition}\label{explanation-prop-cor} If $\G$ is linearizable at $\OO= \OO_x$, then
\begin{enumerate}
\item[(a)] $\G$ is proper at $x$ if and only if $G_x$ is compact. 
\item[(b)] $\G$ is $s$-proper at $x$ if and only if $G_x$ is compact and $\OO$ is a stable orbit. 
\end{enumerate}
\end{proposition}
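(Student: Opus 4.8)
The plan is to reduce both statements to the local model $\NN_{\OO}(\G)$ via the linearizing isomorphism $\G|_U\cong \NN_{\OO}(\G)|_V$, and then to the action groupoid $G_x\ltimes \NN_x$ via the Morita equivalence of Example \ref{example-M-isotropy}. For (a), the key observation is that properness at $x$ is a local condition: since it only concerns arrows whose source \emph{and} target tend to $x$, for any neighbourhood $U$ of $x$ one has that $\G$ is proper at $x$ if and only if $\G|_U$ is proper at $x$ (a convergent subsequence produced in $\G$ has its limit over $x\in U$, hence lies in $\G|_U$, and conversely a sequence in $\G$ with $(s,t)\to(x,x)$ eventually lies in $\G|_U$). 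Applying this to the linearizing neighbourhood gives that $\G$ is proper at $x$ exactly when $\NN_{\OO}(\G)$ is. Now $\NN_{\OO}(\G)$ is Morita equivalent to $G_x\ltimes \NN_x$ with $x$ being $P$-related to $0\in\NN_x$, so by Morita invariance of properness (Proposition \ref{prop: morita invariance of properness}) this is equivalent to $G_x\ltimes \NN_x$ being proper at the fixed point $0$. Since $0$ is fixed, a sequence $(\gamma_n,0)$ admits a convergent subsequence if and only if $(\gamma_n)$ does; hence the action groupoid is proper at $0$ precisely when $G_x$ is compact, proving (a).

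For (b), one implication is immediate: $s$-properness at $x$ forces a convergent subsequence already under the weaker hypothesis $s(g_n)\to x$, so it implies properness at $x$, whence $G_x$ is compact by (a); and Lemma \ref{s-prop-stable} gives directly that $\OO$ is stable. For the converse, assume $G_x$ compact and $\OO$ stable. Here I would invoke the extended local model: since $\G$ is linearizable at $\OO$ and $\OO$ is stable, the final assertion of Proposition \ref{new-local-model} provides an \emph{invariant} neighbourhood $U$ together with an isomorphism $\G|_U\cong \NN_{\OO}(\G)|_V$ in which $V=(P_x\times W)/G_x$ is an invariant sub-bundle. Because both $U$ and $V$ are invariant, $\G$-invariant subsets of $U$ correspond exactly to $\NN_{\OO}(\G)$-invariant subsets of $V$, so the invariant neighbourhoods of $\OO$ inside the model are precisely the sub-bundles $(P_x\times W')/G_x$; these form a neighbourhood basis of $\OO$ if and only if $P_x$ is compact. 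Since $\OO$ is stable, $P_x$ must therefore be compact, and then $\G_{\OO}=(P_x\times P_x)/G_x$ is compact as well.

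To finish, I note that in the model the source fibre over any point of the zero section is a copy of $P_x$, so $s$-properness at $x$ reduces to properness at $x$ of the source map $s\colon\G_{\OO}\to\OO$, which holds because $\G_{\OO}$ is compact; hence $\NN_{\OO}(\G)|_V$ is $s$-proper at $x$. Finally, since $U$ is invariant, every sequence in $\G$ with $s(g_n)\to x$ eventually lies in $\G|_U$, and its limit (over $x\in U$) does too, so $s$-properness at $x$ of the model yields $s$-properness at $x$ of $\G$, completing (b).

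The main obstacle is the faithful transfer of stability through the linearization. This is exactly why I route the argument through the invariant model of Proposition \ref{new-local-model} rather than through a generic $V$: restricting to a non-invariant neighbourhood enlarges the collection of invariant subsets (a pinched $V$ can carry small invariant neighbourhoods even for a non-compact orbit), so only when $U$ and $V$ are \emph{both} invariant do the two notions of invariance match and the clean computation ``invariant neighbourhoods are sub-bundles, forming a basis precisely when $P_x$ is compact'' becomes valid.
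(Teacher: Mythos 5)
Your part (a) and the forward implication of (b) are correct and essentially follow the paper's own route (Morita invariance of properness via Corollary \ref{cor-comp-lin}, the explicit check for $G_x\ltimes\NN_x$ at its fixed point, and Lemma \ref{s-prop-stable}); making the locality of ``properness at $x$'' explicit is a reasonable addition. The converse of (b), however, has two genuine gaps, and they sit exactly where the paper does its real work. First, Proposition \ref{new-local-model} does not deliver what you claim. Under stability, its final assertion produces an invariant $U$ and an isomorphism $\G|_U\cong\textrm{Gauge}(Q)$ where $Q$ is an open \emph{sub-object} of $P_x\times\NN_x$, i.e.\ a $G_x$-invariant open set containing $P_x\times\{0\}$; equivalently $\G|_U\cong\NN_{\OO}(\G)|_V$ with $V=Q/G_x$ a possibly pinched, \emph{non-invariant} neighbourhood of $\OO$. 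It does not produce an invariant sub-bundle $(P_x\times W)/G_x$. Your own closing paragraph explains why this is fatal: with $V$ non-invariant, the isomorphism matches $\G$-invariant subsets of $U$ only with unions of partial orbits (orbits of $\NN_{\OO}(\G)$ cut down to $V$), so stability of $\OO$ in $\G$ does not transfer to stability in $\NN_{\OO}(\G)$, and your dictionary ``invariant neighbourhoods $=$ sub-bundles'' is unavailable. The statement you would need, with \emph{both} sides invariant, is essentially Proposition \ref{prop-cor2}, which the paper derives from an auxiliary lemma (stated just before this proposition), not from Proposition \ref{new-local-model}; citing the latter does not close this step.

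Second, even granting the invariant model, the equivalence ``the sub-bundles $(P_x\times W')/G_x$ form a neighbourhood basis of $\OO$ if and only if $P_x$ is compact'' is asserted, not proved, and the direction you actually use (basis $\Rightarrow$ compact) is precisely the heart of the paper's proof. When $\OO$ is non-compact one must \emph{construct} the obstruction: take a smooth $\epsilon:\OO\rmap(0,\infty)$ with infimum $0$, fix a $G_x$-invariant metric on $\NN_x$, and form the pinched open set $P_\epsilon=\{(p,v)\in P_x\times\NN_x:\ \|v\|<\epsilon(\pi(p))\}$, which is $G_x$-invariant and contains $P_x\times\{0\}$ but contains no product $P_x\times W$, since any nonzero $w_0\in W$ would force $\epsilon>\|w_0\|$ everywhere. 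This construction is what converts stability into compactness of $\OO$ (hence of $P_x$, as $G_x$ is compact); without it, ``since $\OO$ is stable, $P_x$ must therefore be compact'' is unsupported. Your remaining steps---compact $P_x$ makes the source map of the model a fibre bundle with compact fibre, hence $s$-proper, and invariance of $U$ transports $s$-properness back to $\G$---are correct in substance (modulo the loose phrasing about $s:\G_{\OO}\rmap\OO$) and agree with what the paper leaves implicit. So the skeleton of your argument is the right one, but the two claims you cite or assert without proof are the actual content of the proposition.
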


For the proofs, we need the following.

\begin{lemma} Let $P\in \mathcal{B}un_{\HH}(\NN)$, $\G= \textrm{Gauge}(P)$. Then $\G$ is proper at $x\in\OO_P$ if and only if $H$ is compact. 
Moreover, in this case, for any invariant neighborhood $U$ of $\OO$, there exists a smaller invariant neighborhood $U'$
such that $\G|_{U'}\cong \NN_{\OO}(\G)$.
\end{lemma}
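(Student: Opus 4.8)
The plan is to treat the two assertions separately, deriving the properness criterion from Morita invariance and the refined linearization from an equivariant triviality argument for the defining submersion $\alpha$.

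For the equivalence ``$\G$ proper at $x$ $\Leftrightarrow$ $H$ compact'', I would invoke the Morita equivalence between $\textrm{Gauge}(P)$ and the action groupoid $H\ltimes\alpha(P)$ recorded in Example \ref{example-M-isotropy} (under which $\OO_P$ is related to $0\in\alpha(P)$). By Morita invariance of pointwise properness (Proposition \ref{prop: morita invariance of properness}), $\G$ is proper at $x$ if and only if $H\ltimes\alpha(P)$ is proper at $0$. Since $\alpha$ is $H$-equivariant and $0$ is fixed by the linear $H$-action on $\NN$, the isotropy of $H\ltimes\alpha(P)$ at $0$ is all of $H$: properness at $0$ then forces $H$ to be compact (apply the definition to the sequence $(h_n,0)$ with $h_n\in H$ arbitrary, whose source and target are both $0$), while conversely compactness of $H$ makes the action groupoid proper. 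This gives the first claim.

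For the second claim assume $H$ is compact. I would first translate everything into the category $\mathcal{B}un_{H}(\NN)$ using the remarks of subsection \ref{The new local model}: the invariant neighborhood $U$ is $U_Q$ for an open sub-bundle $Q=\alpha^{-1}(W)$, one has $\G|_{U}=\textrm{Gauge}(Q)$, and $\NN_{\OO}(\G)=\textrm{Gauge}(P_0\times\NN,\textrm{pr}_2)$. Thus it suffices to produce an $H$-invariant ball $W'\subseteq W$ together with an isomorphism of objects $\alpha^{-1}(W')\cong P_0\times\NN$ in $\mathcal{B}un_{H}(\NN)$, since by the reconstruction principle of Example \ref{example-M-isotropy} such an object isomorphism descends to an isomorphism of gauge groupoids identifying $\OO$ with $\OO_P$. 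To build it, I would average to equip $P$ with an $H$-invariant Ehresmann connection for $\alpha$ and $\NN$ with an $H$-invariant inner product; parallel transport along the radial rays $t\mapsto tw$ then defines an $H$-equivariant, $\alpha$-compatible map $\Psi$ from a neighborhood of $P_0\times\{0\}$ in $P_0\times\NN$ into $P$ which restricts to the identity on $P_0$ and is a local diffeomorphism along it. Restricting to a small invariant ball $W'$ and then rescaling $W'$ onto all of $\NN$ by an equivariant radial diffeomorphism (legitimate because $H$ acts orthogonally, fixing $0$ and preserving rays) yields the required isomorphism, whence $\G|_{U'}\cong\NN_{\OO}(\G)$ with $U'=U_{\alpha^{-1}(W')}\subseteq U$ invariant.

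The main obstacle is the step producing $\Psi$ as a genuine trivialization over a full sub-bundle $\alpha^{-1}(W')$ rather than merely over an open sub-object of $P_0\times\NN$: this is an Ehresmann-type completeness statement for the horizontal lifts near the fibre $P_0$, and it is precisely here that properness (through the compactness of $H$) is essential, in the same spirit as Lemma \ref{Weinstein-lemma}. Once this trivialization is secured, the remaining equivariance bookkeeping and the passage from an object isomorphism to a gauge-groupoid isomorphism are routine, so the whole argument reduces to this single local-triviality step.
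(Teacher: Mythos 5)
Your treatment of the properness criterion is correct and is essentially the paper's own argument: Morita invariance of properness at a point (Proposition \ref{prop: morita invariance of properness}) applied to the equivalence of Example \ref{example-M-isotropy} between $\textrm{Gauge}(P)$ and $H\ltimes\alpha(P)$. The gap is in the second half, and it sits exactly where you placed it: the ``single local-triviality step'' you defer is not a routine Ehresmann-type fact that follows from compactness of $H$ --- it is the entire difficulty, and for a general object $P\in\mathcal{B}un_{H}(\NN)$ it is simply false. Compactness of $H$ buys you invariant averaging (connections, inner products), but it gives no control along the non-compact fibre $P_0$: the horizontal lifts of the rays $t\mapsto tw$ through points of $P_0$ need not exist up to time $1$, and no shrinking of $W$ repairs this. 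Concretely, take $H$ trivial (certainly compact), $\NN=\Rr$, and $P=(\Rr\times\Rr)\setminus\{(n,1/n):n\geq 1\}$ with $\alpha=\textrm{pr}_2$. Every invariant neighborhood $\alpha^{-1}(W')$ contains a disconnected fibre $\alpha^{-1}(1/n)\cong\Rr\setminus\{n\}$, whereas every orbit of $\NN_{\OO}(\G)$, the submersion groupoid of $\textrm{pr}_2\colon P_0\times\NN\rmap\NN$, is connected; since any isomorphism of Lie groupoids carries orbits diffeomorphically onto orbits, $\G|_{U'}\cong\NN_{\OO}(\G)$ is impossible for every invariant $U'$. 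This is precisely the phenomenon that forces the finite-type hypothesis into Lemma \ref{Weinstein-lemma} and Corollary \ref{cor1}; it cannot be traded for compactness of $H$.

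The paper's own proof never attempts to trivialize a general $P$, which is how it avoids this. Its second assertion is proved --- and, equally important, is only ever applied (in Propositions \ref{explanation-prop-cor} and \ref{prop-cor2}, after Proposition \ref{new-local-model} together with stability) --- in the situation where $P$ is the linear object $P_0\times\NN$ or a product sub-bundle $P_0\times W$ of it. In that case the argument is one line and needs no connection or parallel transport: since $H$ is compact, the origin $0\in\NN$ admits arbitrarily small $H$-invariant neighborhoods $W'\subset W$ that are equivariantly diffeomorphic to $\NN$ (invariant inner product, small ball, radial rescaling); crossing with $P_0$ gives an isomorphism $P_0\times W'\cong P_0\times\NN$ in $\mathcal{B}un_{H}(\NN)$, hence $\G|_{U'}\cong\textrm{Gauge}(P_0\times\NN)=\NN_{\OO}(\G)$. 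So the lemma should be read, as its proof and its uses require, with $P$ a product; your proposal, by trying to establish it for an arbitrary object $P$, is attempting to prove a false statement, and the obstacle you isolated is not a technical loose end but an unfillable one.
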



\begin{proof} That $G_x$ must be compact is clear. Conversely, compactness of $G_x$ implies that $G_x\ltimes \NN_x$ is proper, hence
with Lemma \ref{cor-comp-lin} and Morita invariance of properness imply that $\G$ is proper at $x$. The last part follows from the fact that the origin $0\in \NN_x$ admits arbitrarily small $G_x$-invariant
neighborhoods which are equivariantly diffeomorphic to $\NN_x$.
\end{proof}

\begin{proof} (of Proposition \ref{explanation-prop-cor}) We are left with (b), in which the direct implication is part of Lemma \ref{s-prop-stable}.
Assume now that $\OO$ is stable and we show $s$-properness at $x$. Using again the previous lemma, we may assume $\G= \NN_{\OO}(\G)$.
Using the bundle picture for the linearization, we see that it suffices to prove that $\OO$ is compact. Assume it is not. Then
we find a smooth function $\epsilon: \OO\rmap (0, \infty)$ whose infimum over $\OO$ is $0$. Choose also a $G_x$-invariant metric on $\NN_x$ and define 
\[ P_{\epsilon}= \{(p, v)\in P_x\times \NN_x: ||v||< \epsilon(\pi(p))\} \]
where $\pi: P_x\rmap \OO$ is the projection. Then $P_{\epsilon}/G_x$  defines an open neighborhood $U_{\epsilon}$ of $\OO$. Since we can find a smaller invariant neighborhood inside of $U_{\epsilon}$,
we can find an open sub-bundle $P'\subset P_{\epsilon}$, i.e. a neighborhood $W$ of the origin in $\NN_x$ such that $P_x\times W\subset P_{\epsilon}$. But then, using any 
non-zero $w_0\in W$, we have $\epsilon > ||w_0||$ on $\OO$, which is in contradiction with the choice of $\epsilon$. 
\end{proof}

\begin{proof} (of Proposition \ref{prop-cor1} and of Proposition \ref{prop-cor2})  The second proposition follows from the last lemma. For the first one
we may assume that $\G= \textrm{Gauge}(P)$ and we have to show that, for any open sub-object $P'\subset P$, there is a smaller one which is isomorphic to the product of $P_x$ with a neighborhood of the origin.
This is Weinstein's Lemma \ref{Weinstein-lemma} for $\alpha: P'\rmap \NN_x$, $K= G_x$ (so that $\alpha^{-1}(0)/K= \OO$ is
of finite type). When $\OO$ is stable, the last lemma clearly implies Proposition \ref{prop-cor2}.
\end{proof}

\subsection{\underline{The open problem}}
It is clear that the main theorem and its corollaries cover a large number of related ``classical results'' from group actions and foliations.
However, there is one intriguing situation that falls out: the case of proper actions of non-compact Lie groups, case in which we do know that
invariant linearizability does hold. The associated groupoids are, of course, not $s$-proper and the orbits are not stable (hence the
invariant linearization holds, but not on arbitrarily small invariant neighborhoods). 
As pointed out by Weinstein (see Example 10.1 in \cite{Wein3})), it is tempting to think that the local triviality of the source map $s$ should
play a role. At this point, the condition that looks most natural to us is the following.

\begin{definition}\label{def-inv-triviality} Let $\G$ be a Lie groupoid over $M$. 
We say that \textbf{$s$ is inv-trivial around $x\in M$} if there exists an invariant neighborhood $U$ of $x$ so that,
above $U$, 
\[ s|_{s^{-1}(U)}: s^{-1}(U)\rmap U\]
is equivalent to a product fibration (i.e. to $\textrm{pr}_2: s^{-1}(x)\times U\rmap U$).
\end{definition}

Of course, action groupoids satisfy this condition.

\begin{remark}\label{remark-s-triviality}\rm\ 
The condition of {\it $s$-properness at $x$} implies that $s$ is trivial around $x$, and then that $s$ is locally trivial on 
arbitrarily small invariant neighborhoods of $x$. However, this does not imply the {\it inv-triviality of $s$ around $x$}. One difference
takes place along the orbits -- think e.g. about compact principal bundles and the associated gauge groupoids (Example \ref{ex-principal-bundles}).

However, there is another difference between the two, which takes place transversally:
inv-triviality of $s$ around $x$ does not imply stability of  $\OO_x$
even if we further assume that $\G$ is proper at $x$ (i.e. the specified hypothesis of our open problem) or that $\OO_x$ is compact
(see also Remark \ref{remark-stability}). Think e.g.
about proper actions (first case) or about compact orbits (second case) for actions of non-compact Lie groups
(Example \ref{ex-action}). Note that these examples are of completely different nature than the transitive ones 
mentioned above (in which the orbit is clearly stable). 
\end{remark}

We leave to the reader the puzzling exercise of translating our problem into one about
equivariant bundles. When wondering about the uniqueness of the extended local model (i.e. of $P$ in Proposition \ref{new-local-model}),
one should be aware of the Picard groups \cite{BW} of the action groupoids involved. The exercise will eventually bring the reader to
rigidity questions similar to the ones of Palais \cite{Palais, Palais2}, but for non-compact manifolds
endowed with free actions, rising also the question of center of mass-techniques \cite{Grove} for
the linearization of proper groupoids (question that already appears in \cite{Wein2}).

\bibliographystyle{amsplain}

\begin{thebibliography}{11}

\bibitem{BW} H.~Bursztyn and A.~Weinstein, Picard groups in Poisson geometry, 
\emph{Mosc. Math. J.},  \textbf{4} (2004), 39–66.

\bibitem{Conn2} J.~Conn, Normal forms for smooth Poisson structures,
  \emph{Annals of Math.}~\textbf{121} (1985), 565--593.



\bibitem{Crainic} M.~Crainic, Differentiable and algebroid cohomology, van Est isomorphisms, and characteristic classes, \emph{Comment.~Math.~Helv.~}\textbf{78} (2003), no. 4, 681--721.

\bibitem{Nuno} M.~Crainic, J. N.~Mestre and I.~Struchiner, Deformations of Lie groupoids, work in progress.

\bibitem{CrFe} M.~Crainic and R.~L.~Fernandes, A geometric approach to Conn's Linearization Theorem, \emph{Annals of Math.}, to appear.


\bibitem{Ionut} M.~Crainic and I.~M\v{a}rcu\c{t}, \emph{A Normal Form Theorem around Symplectic Leaves}, preprint arXiv:1009.2090 

\bibitem{DuZu} J.P.~Dufour and N.T.~Zung, \emph{Poisson structures and their normal forms}, Progress in Mathematics, 242. Birkh$\ddot{a}$user Verlag, Basel, 2005.


\bibitem{DuKo} J.~J.~Duistermaat and J.~Kolk, \emph{Lie groups}, Universitext, Springer-Verlag Berlin, 2000.

\bibitem{Grove}K.~Grove and H.~Karcher, How to conjugate C1-close group actions. \emph{Math.~Z.~} \textbf{132} (1973), 11Ð20. 

\bibitem{Haefliger} A.~Haefliger, Groupoids and foliations, in Groupoids in analysis, geometry, and physics (Boulder, CO, 1999),  83–100, Contemp. Math., 282, 2001.

\bibitem{Lang} S.~Lang, \emph{Differential and Riemannian manifolds}, Graduate Texts in Mathematics, 160. Springer-Verlag, New York, 1995.



\bibitem{MkXu} K.~Mackenzie and P.~Xu, Classical lifting processes and multiplicative vector fields, 
\emph{Quart.~J.~Math.~Oxford},  \textbf{49} (1998), 59–-85.

\bibitem{MoMr} I.~Moerdijk and J.~Mrcun, \emph{Introduction to foliations and Lie groupoids}, Cambridge Studies in Advanced Mathematics, 91. Cambridge University Press, 2003.

\bibitem{Palais} R.~Palais and T.~Stewart, Deformations of compact differentiable transformation groups, 
\emph{Amer. J. Math.},  \textbf{82} (1960), 935-937.

\bibitem{Palais2}R.~S.~Palais, Equivalence of nearby differentiable actions of a compact group. \emph{Bull. Amer. Math. Soc.} \textbf{67} (1961) 362Ð364. 




\bibitem{Hessel} H.~Posthuma, M.~Pflaum and X.~Tang, \emph{Geometry of orbit spaces of proper Lie groupoids}, preprint arXiv:1101.0180.

\bibitem{Trent} G.~Trentinaglia, Tannaka duality for proper Lie groupoids, \emph{Journal of Pure and Applied Algebra} \textbf{214} (2010), no.6, 750--768.

\bibitem{Wein2} A.~Weinstein, Linearization problems for Lie algebroids and Lie groupoids, \emph{Lett.~Math. Phys.~}\textbf{52} (2000), 93--102.

\bibitem{Wein3} A.~Weinstein, Linearization of regular proper groupoids, \emph{J.~Inst.~Math.~Jussieu} \textbf{1} (2002), no.3, 493--511.

\bibitem{Zung} N.T.~Zung, Proper Groupoids and Momentum Maps: Linearization, Affinity and Convexity,  \emph{Ann. Sci. ƒcole Norm. Sup.} \textbf{39} (2006), no. 5, 841--869.

\end{thebibliography}
\def\lllll{}

\end{document}